\newtheorem{thm}{Theorem}[section]
\newtheorem{lem}[thm]{Lemma}
\newtheorem{prop}[thm]{Proposition}
\newtheorem{cor}[thm]{Corollary}
\theoremstyle{definition}
\newtheorem{defn}[thm]{Definition}
\newtheorem{rem}[thm]{Remark}
\numberwithin{equation}{section}
\numberwithin{figure}{section}
\newcommand{\Z}{\mathbb{Z}}
\newcommand{\K}{\mathcal{K}}
\newcommand{\C}{\mathcal{C}}
\newcommand{\R}{\mathbb{R}}
\newcommand{\snK}{\| K\|_s}
\newcommand{\hnK}{\| K\|_H}
\newcommand{\esnK}{\| K\|^{ex}_s}
\newcommand{\topnK}{\| K\|^{top}_s}
\newcommand{\qh}{\text{quasi-homomorphism}}
\title[The geometry of the knot concordance space]{The geometry of the knot concordance space}
\author{Tim D. Cochran$^{\dag}$}
\address{Department of Mathematics MS-136, P.O. Box 1892, Rice University, Houston, TX 77251-1892}
\email{cochran@rice.edu}
\author{Shelly Harvey$^{\dag\dag}$}
\address{Department of Mathematics MS-136, P.O. Box 1892, Rice University, Houston, TX 77251-1892}
\email{shelly@rice.edu}
\thanks{$^{\dag}$Partially supported by the National Science Foundation  DMS-1309081}
\thanks{$^{\dag\dag}$ Partially supported by the National Science Foundation  DMS-0748458 (CAREER) and DMS-1309070}
\subjclass[2000]{Primary 57M25}
\begin{document}

\date{\today}
\begin{abstract}  Most of the $50$-years of study of the set of knot concordance classes, $\C$,  has focused on its structure as an abelian group. Here we take a different approach, namely we study $\C$ as a metric space admitting  many natural geometric operators, especially satellite operators. We consider several knot concordance spaces, corresponding to different categories of concordance, and two different metrics. We establish the existence of quasi-n-flats for every $n$, implying that $\C$ admits no quasi-isometric embedding into a finite product of (Gromov) hyperbolic spaces.  We show that every satellite operator is a quasi-homomorphism $P:\C\to \C$. We show that winding number one satellite operators induce quasi-isometries  with respect to the metric induced by slice genus.   We prove that strong winding number one patterns induce isometric embeddings for  certain metrics.  By contrast, winding number zero satellite operators are bounded functions and hence quasi-contractions. These results contribute to the suggestion that $\C$ is a fractal space. We establish various other results about the large-scale geometry of arbitrary satellite operators.
\end{abstract}

\maketitle

\section{Introduction}\label{sec:intro}
A {\it classical knot} is an embedded oriented $S^1$ in $S^3$. We are interested in the ``$4$-dimensional''  equivalence relation on knots, called \textit{concordance}, due to Fox and Milnor ~\cite{FoMi57,FoMi66}.  Two knots, $K_0\hookrightarrow
S^3\times \{0\}$ and $K_1\hookrightarrow S^3\times \{1\}$, are {\it concordant} if
there is an annulus smoothly and properly embedded in $S^3\times [0,1]$ which restricts on its boundary to
 the given knots.    Knot concordance is an important microcosm for the general study of $4$-dimensional manifolds ~\cite{CF}.  Our over-arching goal is to discover more about the set, $\C $, of  concordance classes of knots.

Most of the $50$-year history of the study of $\C$ has focused on its structure as an abelian group under the operation of connected sum. Here we take a decidely different approach, namely we study $\C$ as a metric space admitting  many natural geometric operators called \textit{satellite operations}.  Since the simplest example of a satellite operator is connected-sum with a fixed knot, this approach can be argued to be more general than focusing on $\C$ as an abelian group.  In particular we address to what extent satellite operations are isometries, quasi-isometries, self-similarities or approximate self-similarities, and whether they are quasi-homomorphisms. Moreover, it  was  conjectured in ~\cite{CHL5} that $\C$ is a \textbf{fractal space}.  A fractal space is a metric space that admits systems of natural self-similarities, which in the weakest context (without a metric) are merely injective functions ~\cite[Def. 3.1]{BGN}. By endowing $\mathcal{C}$ with a metric we are able to address this conjecture in a more meaningful way.

We will, in fact, consider four different ``concordance'' equivalence relations on $\mathcal{K}$ (corresponding to different categories), with the sets of equivalence classes being denoted by $\C$,  $\C^{top}$, $\C^{ex}$, and $\C^{\frac{1}{n}}$. Here $\C^{top}$ is the (usual) set of \textit{topological knot concordance} classes wherein two knots are equivalent if there exists a collared proper topological embedding of an annulus into a topological manifold homeomorphic to $S^3\times [0,1]$ which restricts on its boundary to the given knots. $\C^{ex}$, short for $\C^{exotic}$, is the set of equivalence classes of knots where two are equivalent if they cobound a properly, smoothly embedded annulus in a smooth manifold \textit{homeomorphic} to $S^3\times [0,1]$; that is to say they are concordant in $S^3\times [0,1]$ equipped with a possibly exotic smooth structure (see ~\cite{Boyer1}\cite[Def. 2]{SatoY}\cite{CDR}). If the smooth $4$-dimensional Poincar\'{e} Conjecture is true then $\C^{ex}=\C$. Finally, for a fixed non-zero integer $n$,  let $\C^{\frac{1}{n}}$ denote the set of equivalence classes of knots in $S^3$ where two  are equivalent if they cobound a smoothly embedded annulus in a smooth $4$-manifold that is $\Z[\frac{1}{n}]$-homology cobordant to $S^3\times [0,1]$. For odd $n$ it seems to be unknown whether or not $\C=\C^{\frac{1}{n}}$!  For economy we will  sometimes often the notation $\C^{*}$ to denote either $*=\varnothing$ (i.e. $\C$), $*=top$, $*=ex$ or $*=\frac{1}{n}$. Each of these is an abelian group under connected sum, with identity the class of the trivial knot $U$, and where the inverse of $K$, denoted $-K$, is the reverse of the mirror image of $K$, denoted $r\overline{K}$.  If $K=0=U$ in $\C$ (respectively: $\C^{ex}, \C^{top}, \C^{\frac{1}{n}})$ then $K$ is called a (smooth) \textbf{slice knot} (respectively: pseudo-slice, topologically slice, $\Z[\frac{1}{n}]$-slice). This is equivalent to saying that $K$ bounds a smoothly embedded disk in a manifold diffeomorphic to $B^4$ (respectively: bounds a smoothly embedded disk in an exotic $B^4$, bounds a collared, topologically embedded disk in a manifold  homeomorphic to $B^4$, bounds a smoothly embedded disk in a smooth manifold that is $\Z[\frac{1}{n}]$-homology equivalent to $B^4$).

In Section~\ref{sec:metrics} we define a norm and a metric on a group. In Section~\ref{sec:metricsonconcordance}, we define several natural metrics on $\C^*$, each induced by a norm. To motivate these, consider that are two strategies to measure how far a knot is from the trivial knot, which is to say how far it is from bounding a disk in $B^4$. One could ask what is the least genus among all surfaces that it \textit{does} bound in $B^4$. Alternatively, one could ask what is the simplest $4$-manifold in which it bounds an embedded $2$-disk. For us these two types will be referred to as ``slice genus norms'' or ``homology norms'', respectively.  Specifically, on $\C$ the most natural and well-studied norm is that given by the \textit{slice genus of $K$}, $\|K\|_s\equiv g_s(K)$, which is the minimum genus among all compact oriented surfaces smoothly embedded in $B^4$ with boundary $K$.  The \textit{homology norm}, denoted $\|-\|_H$ is essentially the minimal $\beta_2$ of a certain class of $4$-manifolds whose boundary is the zero-framed surgery on $K$ (see Section~\ref{sec:metricsonconcordance} for details). The homology norm is only known to be a pseudo-norm on $\C$. One of our first results is that these two really are quite different.

\newtheorem*{prop:metricsaredifferent}{Proposition~\ref{prop:metricsaredifferent}}
\begin{prop:metricsaredifferent} The identity map $i: (\C,d_s)\to (\C,d_H)$ is not a quasi-isometry.
\end{prop:metricsaredifferent}

\noindent On $\C^{ex}$ these notions yield two norms. One, $\|-\|_{ex}$ is give by the ``slice genus'' in a $4$-ball with a potentially exotic smooth structure. The other is the homology norm which is a true norm on $\C^{ex}$. There are also analogues of these two metrics for the other categories  $\C^{top}$ and $\C^{\frac{1}{n}})$ but we leave the precise definitions to the body of the paper.

In Section~\ref{sec:quasi-flats} we show the existence of \textit{quasi-n-flats}  for most of these metric spaces. Recall this means:

\newtheorem*{thm:quasiflats}{Theorem~\ref{thm:quasiflats}}
\begin{thm:quasiflats} For each $n\geq 1$ there are subspaces of $(\C, d_s)$ and $(\C, d_H)$ that are quasi-isometric to $\mathbb{R}^n$. The same holds for $(\C^{ex}, d_s^{ex}), (\C^{ex}, d_H^{ex}), (\C^{top}, d_s^{top}), $ and $(\C^{top}, d_H^{top})$.
\end{thm:quasiflats}

It follows that $(\C, d_s)$ cannot be isometrically embedded in any finite product of $\delta$-hyperbolic spaces.

The proposed self-similarities are classical \textit{satellite operations}. Let $P$ be a knot in a solid torus, $K$ be a knot in $S^3$ and $P(K)$  be the satellite knot of $K$ with pattern $P$.  The algebraic intersection number of $P$ with a cross-sectional disk of the solid torus is called \textit{the winding number $w$ of $P$}. It is known that any such operator induces a satellite operator $P:\mathcal{C}^*\to \mathcal{C}^*$ on the various sets of  concordance classes of knots. The importance of satellite operations extends beyond knot theory. Such operations have been generalized to operations on $3$ and $4$-manifolds where they produce very subtle variations while fixing the homology type ~\cite[Sec. 5.1]{Ha2}. In particular winding number one satellites are closely related to Mazur $4$-manifolds  and Akbulut \textit{corks} ~\cite{AkKi3}. These can be used to alter the smooth structure on $4$-manifolds  (see for example ~\cite{AkYa1}).

Satellite operators seem to almost never be homomorphisms, but the presence of a metric allows one to ask whether they are ``close to being homomorphisms''. 
\begin{defn}\label{def:quasimorphism}  Suppose $G$ is a group and $A$ is an abelian group equipped with a (group) norm $\|-\|$.  A function $f:X\to A$ is called an \textit{quasi-homomorphism} if there exists a constant $D_f\geq 0$, called the \textit{defect} of $f$, such that, for all $x,y\in X$,
$$
\|f(xy)-f(x)-f(y)\|\leq D_f.
$$
\end{defn}

\newtheorem*{thm:quasimorphism}{Theorem~\ref{thm:quasimorphism}}
\begin{thm:quasimorphism} Any satellite operator $P:\C\to \C$ is a  quasi-homorphism with respect to either the norm given by the slice genus or the pseudo-norm $d_H$.
\end{thm:quasimorphism}
Moreover,
\newtheorem*{prop:morequasi}{Proposition~\ref{prop:morequasi}}
\begin{prop:morequasi} Suppose that $q:\C\to \R$ is a quasi-homomorphism whose absolute value gives a lower bound for some positive multiple of the slice genus. Then, for any satellite operator $P$,  the composition $q\circ P:\C\to \R$ is a quasi-homomorphism.
\end{prop:morequasi}
There are many concordance invariants, such as Levine-Tristram signatures, $\tau$, $s$ and $\epsilon$  that provide such $q$.

There has been considerable interest in whether satellite operators are injective (especially in light of the fractal conjecture). For example, it is a famous open problem as to whether or not the Whitehead double operator is \textbf{weakly injective} (an operator is called weakly injective if $P(K)=P(0)$ implies $K=0$ where $0$ is the class of the trivial knot) ~\cite[Problem 1.38]{Kirbyproblemlist}(see ~\cite{HedKirk2} for a survey).   In ~\cite{CHL5} large classes of winding number zero operators, called ``robust doubling operators'' were introduced and evidence was presented for their injectivity.  It was recently shown by Cochran-Davis-Ray that certain winding number $\pm 1$ satellite operators, called \textit{strong} winding number $\pm 1$ operators (see Section~\ref{sec:satelliteoper}), induce injective satellite operators on $\C^{ex}, \C^{top} $, and on $\C$, modulo the smooth $4$-dimensional Poincar\'{e} conjecture; while an arbitrary (non-zero) winding number $n$ satellite operator is injective on $\C^{\frac{1}{n}}$ ~\cite[Thm. 5.1]{CDR}.  We generalize their injectivity result to the following rather striking result with regard to the homology norm:

\newtheorem*{thm:mainisom}{Theorem~\ref{thm:mainisom}}
\begin{thm:mainisom}If $P$ is a strong winding number $ \pm 1$  pattern then 
$$
P:(\C,d_H)\to (\C,d_H)
$$
 preserves the pseudo-norm $d_H$ and is quasi-surjective (defined just below); so that if the $4D$-Poincar\'{e} conjecture is true then $P$ is a quasi-surjective isometric embedding of $\C$.
Moreover 
$$
P:(\C^*,d_H^*)\to (\C^{*},d_H^*)
$$
is an isometric embedding for $*=ex, top$ or $\frac{1}{n}$ and is quasi-surjective.  
\end{thm:mainisom}

This result, taken together with the recent result of A. Levine that some of these operators are far from surjective ~\cite{ALev2014}, means that these operators and their iterates map the concordance space isometrically onto proper subspaces. This establishes that $(\C^{ex},d_H)$ is a fractal metric space, as is $(\C,d_H)$ if the $4D$-Poincar\'{e} conjecture is true.

What can be said using the very important slice genus norm?  Recall:

\begin{defn}\label{def:quaisisom} If $(X,d)$ and $(Y,d')$ are metric spaces then a function $f:X\to Y$ is a \textbf{quasi-isometry} if there are constants $A\geq 1$, $B\geq 0$ and $C\geq 0$ such that
\begin{equation}\label{eq:defquasiisomfirst}
\frac{1}{A}d(x,y)-B\leq d'(f(x),f(y))\leq A~d(x,y)+B;
\end{equation}
and, for every $z\in Y$ there exists some $x\in X$ such that
\begin{equation}\label{eq:defquasiisomsecond}
d'(z,f(x)\leq C.
\end{equation}
A function that satisfies the second condition is called \textbf{quasi-surjective}. A function that satisfies only the first condition is called \textbf{a quasi-isometric embedding of $X$ into $Y$}. Note the definitions make sense even if one has only pseudo-metrics.
\end{defn}

\newtheorem*{thm:quasi}{Theorem~\ref{thm:quasi}}
\begin{thm:quasi} If $P$ is a winding number $\pm 1$ pattern  then $P:\C\to \C$ is a quasi-isometry with respect to the metric $d_s$. In fact, $P$ is a quasi-isometry for each of the metrics we discuss on each $\C^*$.
\end{thm:quasi}

By contrast, we show that any winding number zero operator is an approximate contraction. This follows from the much stronger:

\newtheorem*{prop:windzerobounded}{Proposition~\ref{prop:windzerobounded}}
\begin{prop:windzerobounded} Any winding number zero satellite operator on $(\C^*,d_*)$ is a bounded function, where $(\C^*,d_*)$ is any of the metric spaces we define.
\end{prop:windzerobounded}

Certain of our results follow from the basic result that any winding number $n$ satellite operator is within a bounded distance of the $(n,1)$-cable operator, which leads to:

\newtheorem*{prop:satellitemnotn}{Proposition~\ref{prop:satellitemnotn}}
\begin{prop:satellitemnotn} Suppose $m\neq \pm n$.   Then  no winding number $m$ satellite operator is within a bounded distance of any winding number $n$ satellite operator.
\end{prop:satellitemnotn}

\section{Norms and metrics on groups}\label{sec:metrics}

\begin{defn}\label{def:metric}  A \textbf{metric} on a set $X$ is a function $d: X\times X\to \R$ such that, for all $x,y,z\in X$
\begin{itemize}
\item [(M1)] $d(x,y)\geq 0$, and $d(x,y)=0$ if and only if $x=y$ (Positivity),
\item [(M2)] $d(x,z)\leq d(x,y)+d(y,z)$ (triangle inequality),
\item [(M3)] $d(x,y)=d(y,x)$ (Symmetry).
\end{itemize}
A function that only satisfies the first part of $(M1)$ is called a \textit{pseudo-metric}.
\end{defn}

If the set $X$ has a group structure then often metrics  induced from \textit{group norms}.

\begin{defn}\label{def:groupnorm}  A \textbf{norm} on a group $G$ is a function $\|-\|:G\to \R$ such that, for all $x,y\in G$
\begin{itemize}
\item [(GN1)] $\|x\|\geq 0$, and $\|x\|=0$ if and only if $x=e$ (Positivity),
\item [(GN2)] $\|xy\|\leq \|x\|+\|y\|$ (sub-additivity),
\item [(GN3)] $\|x^{-1}\|=\|x\|$ (Symmetry).
\end{itemize}
A function that satisfies only the first part of $(GN1)$ is called a \textit{pseudo-norm} or a \textit{semi-norm} .
\end{defn}

Beware that, even for abelian groups $G$, we do \textit{not} have the usual property (for normed vector spaces): $\|nx\|=n\|x\|$ for $n\in \Z$. This is assumed only when $n$ is a unit, as in ($GN3$).

It follows easily that:

\begin{prop} If $\|-\|$ is a norm (respectively, pseudo-norm) on the group $G$ then $d(x,y)=\|xy^{-1}\|$ is a metric (respectively, pseudo-metric) on the underlying set $G$, called the \textbf{metric induced by the norm $\|-\|$}. This metric is \textbf{right-invariant}, meaning that $d(xg,yg)=d(x,y)$ for all $g\in G$.
\end{prop}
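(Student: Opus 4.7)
The plan is to verify each of the three metric axioms (M1)--(M3) directly from the corresponding norm axioms (GN1)--(GN3), and then to check right-invariance by a one-line cancellation in the group.

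First I would handle positivity (M1). Non-negativity $d(x,y)\geq 0$ is immediate from (GN1) applied to $xy^{-1}$. For the equality clause, $d(x,y)=0$ means $\|xy^{-1}\|=0$, and by the second half of (GN1) this forces $xy^{-1}=e$, i.e.\ $x=y$; conversely $d(x,x)=\|e\|=0$. In the pseudo-norm case, one just drops the "only if" direction, yielding a pseudo-metric.

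Next I would verify the triangle inequality (M2) by inserting a factor of $y^{-1}y$:
\[
d(x,z)=\|xz^{-1}\|=\|(xy^{-1})(yz^{-1})\|\leq \|xy^{-1}\|+\|yz^{-1}\|=d(x,y)+d(y,z),
\]
where the middle inequality is sub-additivity (GN2). Symmetry (M3) follows from (GN3) together with the identity $(xy^{-1})^{-1}=yx^{-1}$:
\[
d(x,y)=\|xy^{-1}\|=\|(xy^{-1})^{-1}\|=\|yx^{-1}\|=d(y,x).
\]

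Finally, for right-invariance I would compute, for any $g\in G$,
\[
d(xg,yg)=\|(xg)(yg)^{-1}\|=\|xgg^{-1}y^{-1}\|=\|xy^{-1}\|=d(x,y).
\]
There is no real obstacle here; the entire statement is a formal unpacking of definitions, and the only mildly subtle point is remembering that without assuming $G$ abelian one must use right-multiplication (not left) in the defining formula $d(x,y)=\|xy^{-1}\|$ in order for the triangle inequality to come out using sub-additivity as written, and correspondingly one obtains \emph{right}-invariance rather than left-invariance.
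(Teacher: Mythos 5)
Your proposal is correct and follows essentially the same route as the paper: the triangle inequality via the insertion $xz^{-1}=(xy^{-1})(yz^{-1})$ and (GN2), symmetry via (GN3), and positivity directly from (GN1). You are in fact slightly more complete than the paper's written proof, which omits the (trivial) right-invariance computation $\|(xg)(yg)^{-1}\|=\|xy^{-1}\|$ that you spell out.
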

\begin{proof} If $d(x,y)=0$ then $\|xy^{-1}\|=0$ so $x=y$ by (GN1). This gives (M1). For (M2) we have
$$
d(x,z)\equiv\|xz^{-1}\|=\|(xy^{-1})(yz^{-1})\|\leq \|xy^{-1}\|+\|yz^{-1}\|\equiv d(x,y)+d(y,z),
$$
using GN2.  For (M3) we have
$$
d(x,y)\equiv\|xy^{-1}\|=\|yx^{-1}\|\equiv d(y,x)
$$
using GN3. 
\end{proof}

We also have the following well-known consequence: 
\begin{prop}\label{prop:extraineq} If the metric  $d$ is induced from a group norm $ \|-\|$ on $G$ then
$$
-d(x,y)\leq \|x\|-\|y\|\leq d(x,y).
$$
\end{prop}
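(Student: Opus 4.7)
The plan is to prove the two-sided inequality by writing each element as a product involving the other, then applying the triangle inequality (subadditivity GN2) for the group norm.

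First I would establish the upper bound $\|x\| - \|y\| \leq d(x,y)$. Observe that $x = (xy^{-1})y$, so by (GN2),
$$
\|x\| = \|(xy^{-1})y\| \leq \|xy^{-1}\| + \|y\| = d(x,y) + \|y\|,
$$
and rearranging gives $\|x\| - \|y\| \leq d(x,y)$.

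For the lower bound $-d(x,y) \leq \|x\| - \|y\|$, I would simply interchange the roles of $x$ and $y$ in the argument above to get $\|y\| - \|x\| \leq d(y,x)$, then use the symmetry property (M3) of $d$ (which itself follows from GN3, as shown in the preceding proposition) to conclude $\|y\| - \|x\| \leq d(x,y)$, i.e.\ $-d(x,y) \leq \|x\| - \|y\|$.

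There is no real obstacle here; the argument is formally identical to the standard proof of the reverse triangle inequality in a normed vector space, with the only subtlety being that one must use GN3 (rather than a scalar multiplication property) to guarantee the symmetry $d(x,y) = d(y,x)$ needed to package the two one-sided estimates into the stated two-sided bound.
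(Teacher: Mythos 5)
Your proof is correct and follows exactly the paper's argument: the decomposition $x=(xy^{-1})y$ with (GN2) gives the upper bound, and swapping $x$ and $y$ together with (M3) gives the lower bound. Nothing further is needed.
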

\begin{proof} By sub-additivity,
$$
\|x\|=\|(xy^{-1})y\|\leq \|xy^{-1}\| + \|y\|= d(x,y)+\|y\|.
$$
The other inequality follows from reversing the roles of $x$ and $y$ and using ($M3$).
\end{proof}

The reader may easily verify the following, which will not be used in the paper.

\begin{prop} A metric  on the underlying set of a group $G$ is induced from a group norm on $G$ if and only if it is  right-invariant.
\end{prop}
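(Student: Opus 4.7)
The plan is to verify both directions of the equivalence. The forward direction is essentially already established in the preceding proposition: if $d(x,y)=\|xy^{-1}\|$ for a group norm, then $d(xg,yg) = \|xg(yg)^{-1}\| = \|xgg^{-1}y^{-1}\| = \|xy^{-1}\| = d(x,y)$, so $d$ is right-invariant. I would just recall this computation in one line.

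For the backward direction, I would define the candidate norm by $\|x\| := d(x,e)$, where $e$ is the identity of $G$, and then check the three norm axioms in turn. Positivity (GN1) is immediate from (M1). For sub-additivity (GN2), I would apply the triangle inequality followed by right-invariance with $g=y$:
\[
\|xy\| = d(xy,e) \leq d(xy,y) + d(y,e) = d(x,e) + d(y,e) = \|x\| + \|y\|.
\]
For symmetry (GN3), the trick is to apply right-invariance with $g=x$ to move $x^{-1}$ to the other side: $\|x^{-1}\| = d(x^{-1},e) = d(x^{-1}x, ex) = d(e,x)$, which equals $d(x,e) = \|x\|$ by the metric symmetry axiom (M3).

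Finally, I would verify that the group norm $\|-\|$ just defined actually induces the original metric $d$. Here again right-invariance is the key: for any $x,y \in G$,
\[
d_{\|-\|}(x,y) = \|xy^{-1}\| = d(xy^{-1}, e) = d(xy^{-1} \cdot y, \, e \cdot y) = d(x,y),
\]
where the third equality uses right-invariance with $g=y$.

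There is essentially no obstacle here; the only subtle point is remembering to use right-invariance to strip the factor of $y$ in both the symmetry check and the check that the induced metric reproduces $d$. The entire argument is a routine manipulation of the three metric axioms together with the single hypothesis $d(xg,yg)=d(x,y)$.
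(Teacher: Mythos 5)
Your proof is correct and complete: the forward direction is the one-line computation already implicit in the paper's earlier proposition, and the backward direction (defining $\|x\| := d(x,e)$, verifying (GN1)--(GN3) via right-invariance, and checking that the induced metric recovers $d$) is exactly the routine verification intended. The paper itself states this proposition without proof, remarking only that the reader may easily verify it, so there is no alternative argument to compare against; your write-up supplies precisely the expected details, including the one point worth flagging, namely the final check that the norm you construct actually induces the original metric.
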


\section{Norms and metrics on concordance classes of knots}\label{sec:metricsonconcordance}

In this section, we will define various pseudo-norms on $\mathcal{K}$, the monoid of ambient isotopy classes of knots. These will induce norms (and metrics) on the  four different concordance groups: $\C$, $\C^{ex}$, $\C^{top}$, and $\C^{\frac{1}{n}}$ respectively. We will abuse notation and use $K$ for both the knot and its concordance class. We then discuss what is known about how these pseudo-metrics compare. In particular we establish that the slice genus norm and the homology norm are quite different.

\begin{subsection}{The slice genus norm and the homology pseudo-norm on $\C$}

\begin{defn}\label{def:slicegenusnorm}  The \textbf{slice genus} of $K$, denoted by $\snK$ or $g_s(K)$,  is the minimum
$g$ such that $K$ is the boundary of a smoothly embedded compact oriented surface of genus $g$ in $B^4$. 
\end{defn}

The following  is well-known.

\begin{prop} $\|-\|_s$ is a norm on the group $\C$ and so  $d_s(K,J)=\|K-J\|_s$ is a metric on the set $\C$.
\end{prop}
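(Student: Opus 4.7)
The plan is to verify the three group-norm axioms (GN1)--(GN3) for $\|-\|_s$, after first checking that the slice genus is well-defined on concordance classes, and then to invoke the proposition of Section~\ref{sec:metrics} to conclude that the induced pairing $d_s(K,J) = \|K-J\|_s$ is a metric.

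First, I would check that $g_s$ descends to a function on $\C$. Suppose $K_0$ and $K_1$ are concordant via a smooth annulus $A \subset S^3\times[0,1]$, and suppose $K_0$ bounds a smooth oriented surface $\Sigma \subset B^4$ of genus $g$. Stacking $B^4$ with $S^3\times[0,1]$ along $S^3\times\{0\}$ produces a smooth $4$-manifold diffeomorphic to $B^4$; inside it, $\Sigma\cup_{K_0} A$ is a smoothly, properly embedded oriented surface of genus $g$ with boundary $K_1$. Thus $g_s(K_1) \leq g_s(K_0)$, and the reverse inequality follows by turning the concordance around. So $\|-\|_s$ is well-defined on $\C$.

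Next I would verify the three norm axioms. For (GN1), $g_s(K)\geq 0$ is immediate, and $g_s(K)=0$ means $K$ bounds a smoothly embedded disk in $B^4$, which is precisely the statement that $K$ represents the identity class $[U]$ in $\C$. For (GN2), given surfaces $\Sigma_K, \Sigma_J \subset B^4$ realizing $g_s(K)$ and $g_s(J)$, take the boundary connected sum of the two $B^4$'s (which is again diffeomorphic to $B^4$) and simultaneously boundary connect sum $\Sigma_K$ with $\Sigma_J$ along an arc joining $K$ to $J$ in the common $S^3$; this yields a smoothly embedded oriented surface of genus $g_s(K)+g_s(J)$ bounding $K\# J$, establishing $\|K+J\|_s \leq \|K\|_s + \|J\|_s$. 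For (GN3), if $\Sigma \subset B^4$ has boundary $K$, then the mirror image $\overline{\Sigma}$ (with reversed orientation) sits in $\overline{B^4}\cong B^4$ and has boundary $r\overline{K} = -K$, with the same genus; this shows $\|-K\|_s = \|K\|_s$.

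Having verified (GN1)--(GN3), the proposition in Section~\ref{sec:metrics} showing that any group norm induces a right-invariant metric via $d(x,y) = \|xy^{-1}\|$ applies directly, giving $d_s(K,J) = \|K-J\|_s = \|K + r\overline{J}\|_s$ as a metric on $\C$. There is no substantive obstacle here; the only thing worth flagging is that one must be slightly careful in (GN2) to ensure that the boundary connect sum of surfaces in the boundary connect sum of balls remains smoothly and properly embedded, which is a standard local picture. Everything else is routine once concordance invariance is in hand.
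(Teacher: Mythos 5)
Your proof is correct, and it follows exactly the route the paper takes: the paper states this proposition without proof as ``well-known,'' but its proof of the analogous statement for $\C^{ex}$ (positivity via the slice disk, sub-additivity via boundary connected sum of surfaces in the boundary connected sum of balls, symmetry via mirror image and reverse, then the general proposition that a group norm induces a metric) is the same argument you give, with your added check of concordance invariance being a standard point the paper leaves implicit.
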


It is also easy to see that $d_s$ has an alternative definition:
\begin{prop} \label{prop:altdefslicedist} $d_s(K,J)$ is equal to the mimimal genus of a compact oriented surface   properly embedded in $S^3\times  [0,1]$, whose boundary is the disjoint union of $K\hookrightarrow S^3\times \{0\}$ and $-J\hookrightarrow -S^3\times \{1\}$.  
\end{prop}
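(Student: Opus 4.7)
My plan is to prove both inequalities by a single geometric construction: the removal or addition of a tubular neighborhood of a ``vertical'' arc on the surface that runs from one end of the product cylinder to the other.

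To show $d_s(K,J) \leq \min_\Sigma g(\Sigma)$, I would start with a properly embedded oriented surface $\Sigma \subset S^3 \times [0,1]$ of genus $g$ with $\partial\Sigma = K \sqcup -J$ (after tubing components together if necessary so that $\Sigma$ is connected, without increasing the genus). Choose an embedded arc $\alpha \subset \Sigma$ from a point of $K$ to a point of $-J$, and a product tubular neighborhood $N(\alpha) \cong D^3 \times [0,1]$ with $D^3\times\{i\}\subset S^3\times\{i\}$ such that $\Sigma \cap N(\alpha)$ is the vertical strip $D^1 \times [0,1]$. The key observation is that $(S^3\times[0,1])\setminus\mathrm{int}\,N(\alpha) \cong (S^3\setminus D^3)\times[0,1] \cong D^3\times[0,1] \cong B^4$, since removing a product sub-cylinder from a product cylinder leaves a product. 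Removing the vertical strip from $\Sigma$ leaves a genus-$g$ surface in this $B^4$ with a single boundary component equal to $K\#(-J)$, where the connect-sum band arises from the scar $S^2\times[0,1]$ on $\partial B^4 = S^3$. Hence $\|K-J\|_s \leq g$.

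For the reverse inequality I would invert this construction. Given a surface $F \subset B^4$ of genus $g$ with $\partial F = K\#(-J)$, pick disjoint 3-balls in $\partial B^4 = S^3$ containing $K$ and $-J$ separately, arranged so that the connect-sum band $b$ lies in the complementary $S^2\times[0,1]$ region. This decomposition identifies $B^4$ with the complement of some $N(\alpha)$ in $S^3\times[0,1]$ in such a way that $K$ lies on $S^3\times\{0\}$, $-J$ lies on $S^3\times\{1\}$, and $b$ lies on the scar. Attaching the model disk $D^1\times[0,1]\subset N(\alpha)$ to $F$ along $b$ produces a properly embedded oriented surface in $S^3\times[0,1]$ with boundary $K\sqcup -J$: the two horizontal edges of the disk, lying in $D^3\times\{0\}$ and $D^3\times\{1\}$, restore the small arcs of $K$ and $-J$ that were absorbed into $b$, while the two vertical edges glue to $F$ along the band arcs. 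Euler-characteristic bookkeeping confirms the genus is still $g$: gluing a disk along two arcs decreases $\chi$ by $-1$ and splits one boundary component into two, keeping the genus unchanged.

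The main technical step where care is required is setting up the model diffeomorphism $(S^3\times[0,1])\setminus N(\alpha)\cong B^4$ together with the identification of the scar $S^2\times[0,1]$ with the region of $\partial B^4 = S^3$ containing the connect-sum band. Once this correspondence is unambiguous, both inequalities follow from the same local strip manipulation and elementary Euler-characteristic bookkeeping.
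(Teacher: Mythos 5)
Your proposal is correct and follows essentially the same route as the paper: choose an arc on the surface joining $K$ to $J$, delete a tubular neighborhood of it to obtain a genus-preserving surface in $B^4$ with boundary $K\#(-J)$, and observe the construction is reversible. You simply spell out the details (the product model for the neighborhood, the reattached band, and the Euler-characteristic count) that the paper's proof summarizes with ``this process is reversible.''
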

\begin{proof} Suppose $\Sigma$ is such a surface  in $S^3\times  [0,1]$. Choose an arc on $\Sigma$ going from $K$ to $J$. Deleting a neighborhood of this arc leaves a $4$-ball containing an embedded surface $\Sigma'$ of the same genus, whose boundary is $K\# -J$. Hence $d_s(K,J)\leq$ genus($\Sigma$). This process is reversible so the other inequality follows.
\end{proof}

Next we define the so-called \textit{homology norm} on $\C$, which is only known to be pseudo-norm.

\begin{defn}\label{def:hnormonC}  $\hnK$, the \textbf{homology norm of} $K$, has two equivalent definitions:
\begin{enumerate}
\item $\hnK$ is the minimum of $\frac{1}{2}(\beta_2(V)+|\sigma(V)|)$ where $V$ ranges over all smooth, oriented, compact, simply-connected $4$-manifolds with $\partial V=S^3$ in which $K$ is slice, that is, in which $K$ bounds a smoothly embedded disk that represents $0$ in $H_2(V, \partial V)$  (compare ~\cite{Taylor1979});
\item $\hnK$ is the minimum of $\frac{1}{2}(\beta_2(W)+|\sigma(W)|)$ where $W$ ranges over over all smooth, compact, oriented $4$-manifolds $W$ whose boundary is $M_K$ (zero-framed surgery of $S^3$ along $K$), whose $\pi_1$ is normally generated by the meridian of $K$ and for which $H_1(M_K)\to H_1(W)$ is an isomorphism. 
\end{enumerate}
\end{defn}

We sketch the proof of the equivalence. Suppose that $K=\partial \Delta $ is slice in a $4$-manifold $V$ satisfying the first part of Definition~\ref{def:hnormonC} with $\frac{1}{2}(\beta_2(V)+|\sigma(V)|)=n$. Let $W$ be the exterior in $V$ of a tubular neighborhood of $\Delta$. Then $\partial W=M_K$ and one can easily check that $W$ has the properties of part two of Definition~\ref{def:hnormonC} and $\frac{1}{2}(\beta_2(W)+|\sigma(W)|)=n$. For the other direction, one simply adds a $2$-handle to $W$ along the meridian of $K$ to arrive at a suitable $V$.

\textbf{Remark}: It might seem more natural to the reader to minimize the quantity $\beta_2(V)$ and indeed this gives another pseudo-metric $d_{H'}$. But the inequalities:
$$
\frac{1}{2}\beta_2(V)\leq \frac{1}{2}(\beta_2(V)+|\sigma(V)|)\leq \beta_2(V)
$$
show that these metrics are quite closely related. For example it is easy to show that the identity map is a quasi-isometry $(\C,d_H)\to (\C,d_{H'})$, and that Theorem~\ref{thm:mainisom} holds for $d_{H'}$.  Our choice has the property that it gives a better approximation to  $d_s$ (see Proposition~\ref{prop:metricsarerelated} below).

\begin{prop}\label{prop:hnormisnorm} $\|-\|_H$ is a pseudo-norm on the group $\C$ and so  $d_H(K,J)=\|K-J\|_H$ is a pseudo-metric on  $\C$.  If the $4$-dimensional smooth Poincar\'{e} conjecture is true then $\|-\|_H$ is a norm on $\C$ and   $d_H(K,J)$ is a metric on  $\C$. 
\end{prop}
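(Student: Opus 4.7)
The plan is to verify the three axioms (GN1)–(GN3) of Definition~\ref{def:groupnorm} for $\|-\|_H$ on $\C$, and then upgrade to a genuine norm assuming the smooth $4$D Poincar\'{e} conjecture. Everything is routine, so I only sketch the shape of each step.

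First, (GN1): non-negativity is immediate, and $\|U\|_H=0$ because the unknot bounds a standard disk in $V=B^4$, which is simply-connected with $\beta_2=\sigma=0$. For (GN3) (symmetry), if $K$ is slice in $V$ via a disk $\Delta$, then $-K=r\overline{K}$ is slice in the orientation-reversal $\overline{V}$ via the same $\Delta$; since $\beta_2(\overline{V})=\beta_2(V)$ and $\sigma(\overline{V})=-\sigma(V)$, the same value of $\tfrac12(\beta_2+|\sigma|)$ is achieved, giving $\|{-K}\|_H=\|K\|_H$.

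The substantive axiom is (GN2) (sub-additivity): given $V_K,V_J$ realizing $\|K\|_H,\|J\|_H$, form the boundary connected sum $V=V_K\natural V_J$. Then $V$ is simply-connected with $\partial V=S^3\# S^3=S^3$, and a Mayer--Vietoris computation gives $\beta_2(V)=\beta_2(V_K)+\beta_2(V_J)$, while the intersection form of $V$ is the orthogonal sum, so $\sigma(V)=\sigma(V_K)+\sigma(V_J)$ and hence $|\sigma(V)|\le|\sigma(V_K)|+|\sigma(V_J)|$. Band-summing the two slice disks along an arc in the connected-sum sphere produces a smooth disk in $V$ bounded by $K\# J$, and the same Mayer--Vietoris setup shows this disk is null-homologous in $H_2(V,\partial V)$. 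Adding the two bounds yields $\|K\# J\|_H\le\|K\|_H+\|J\|_H$.

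For the norm upgrade, the key observation is that $\beta_2(V)\pm\sigma(V)=2b_2^\pm(V)$, so $\beta_2(V)+|\sigma(V)|=2\max(b_2^+,b_2^-)$ is always a non-negative even integer, and $\|K\|_H$ is a non-negative integer. Hence $\|K\|_H=0$ forces the minimum to be realized by some simply-connected compact smooth $4$-manifold $V$ with $\partial V=S^3$ and $\beta_2(V)=0$ — a smooth homotopy $4$-ball. Assuming the smooth $4$D Poincar\'{e} conjecture, $V\cong B^4$, so $K$ bounds a smooth disk in $B^4$ and is slice. The only places that require any thought, and hence the "main obstacle" in a fully written proof, are (i) verifying that the band-summed disk in (GN2) remains null-homologous rel boundary, and (ii) the parity observation above, which is what ensures $\|K\|_H=0$ really delivers a $V$ with $\beta_2=0$ rather than a half-integer coincidence.
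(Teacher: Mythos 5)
Your proposal is correct and follows essentially the same route as the paper: sub-additivity via boundary connected sum of the minimizing manifolds with a band-sum of the slice disks, symmetry via orientation reversal, and the upgrade to a genuine norm by noting that $\|K\|_H=0$ forces $K$ to be slice in a contractible (hence, by Freedman, possibly exotic) $4$-ball, which the smooth $4$D Poincar\'e conjecture identifies with $B^4$. The extra details you supply (Mayer--Vietoris additivity, the parity of $\beta_2+|\sigma|$, null-homology of the band-summed disk) are correct refinements of steps the paper treats as immediate.
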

\begin{proof}  First we establish ($GN1$). Clearly $\hnK\geq 0$ with equality if and only if $K$  is slice in a contractible manifold, which is, by the work of Freedman, a possibly exotic $4$-ball. It follows that  if the $4$-dimensional Poincar\'{e} is true then $K=0$ in $\C$. 

Next, suppose $K$ and $J$ are slice in $4$-manifolds $V_K$ and $V_J$ satisfying the first part of Definition~\ref{def:hnormonC} with $\frac{1}{2}(\beta_2(V_k)+|\sigma(V_K)|)=\|K\|_H$ and $\frac{1}{2}(\beta_2(V_J)+|\sigma(V_J)|)=\|J\|_H$.  Then $K\# J$ is slice in the boundary connected sum of $V_K$ and $V_J$.  Hence $\|K\#J\|_H\leq \|K\|_H+\|J\|_H$,  establishing ($GN2$).

The homology norm of $K$ is clearly the same as that of  its mirror image (by reversing the orientation of $W$) and that of its reverse. Thus ($GN3$) holds.
\end{proof}

As for $d_s$, $d_H$ has an alternative definition, whose proof we leave to the reader:
\begin{prop}\label{prop:altdefhomologydistonC}  $d_{H}(K,J)$ is equal to the mimimum of  $\frac{1}{2}(\beta_2(Z)+|\sigma(Z)|)$ where $Z$ ranges over all smooth, oriented, compact, simply-connected $4$-manifolds with $\partial Z=S^3\prod -S^3$ in which $K$ and $J$ are concordant, that is in which $K\hookrightarrow S^3\times {0}$ and $-J\hookrightarrow -S^3\times{1}$ cobound  a null-homologous smoothly embedded annulus.
\end{prop}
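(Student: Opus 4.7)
The plan is to mimic the proof of Proposition~\ref{prop:altdefslicedist}, establishing both inequalities by explicitly converting between a $V$ with $\partial V = S^3$ containing a slice disk for $K\#-J$ and a $Z$ with $\partial Z = S^3 \coprod -S^3$ containing an annulus between $K$ and $-J$. The key point is that the transition is effected by attaching (or removing) a single $3$-handle, which preserves both $\beta_2$ and $\sigma$.

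For the forward direction, suppose $V$ is a simply-connected $4$-manifold with $\partial V = S^3$ in which $K\#-J$ bounds a smoothly embedded disk $\Delta$ representing $0$ in $H_2(V,\partial V)$, with $\frac{1}{2}(\beta_2(V)+|\sigma(V)|) = \|K\#-J\|_H$. Let $S \subset \partial V$ be the $2$-sphere along which the connect sum was formed, so $S$ separates $\partial V$ into $3$-balls $B_a^3, B_b^3$ containing the arcs of $K, -J$ respectively, and $S$ meets $K\#-J$ transversally in two points. Attach a $3$-handle $D^3 \times I$ to $V$ along a bicollar $S \times I \subset \partial V$; call the result $Z$. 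Since $S$ bounds a $3$-ball in $\partial V \subset V$, its class in $H_2(V)$ vanishes, and a Mayer--Vietoris computation shows the attachment contributes only to $H_3$; thus $\beta_2(Z)=\beta_2(V)$ and $\sigma(Z)=\sigma(V)$, and van Kampen shows $Z$ is simply connected. By construction $\partial Z = S^3 \coprod -S^3$ with $K$ and $-J$ lying in the two components.

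To upgrade $\Delta$ to an annulus, write the $3$-handle as $D^3 \times I$ with attaching region $\partial D^3 \times I = S^2 \times I \equiv S \times I$; then the two bumping arcs $\partial\Delta \cap (S \times I)$ can be isotoped to $\{s_1\}\times I$ and $\{s_2\}\times I$ for points $s_1,s_2 \in \partial D^3$. Pick a properly embedded arc $\gamma \subset D^3$ from $s_1$ to $s_2$ and set $F = \gamma \times I$; this is a disk in the $3$-handle whose boundary contains exactly the two bumping arcs. Then $A := \Delta \cup F$ is a connected surface with $\chi(A) = 1 + 1 - 2 = 0$ and two boundary components, hence an annulus, whose boundary circles are $K$ (the arc of $\Delta$ in $B_a^3$ joined by $\gamma \times \{0\}$) and $-J$ in the two components of $\partial Z$. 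Since $\Delta$ is null-homologous rel boundary in $V$ and $F$ bounds in the contractible $3$-handle, a Mayer--Vietoris argument shows $A$ is null-homologous rel boundary in $Z$.

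For the backward direction, given $Z$ simply connected with $\partial Z = S^3 \coprod -S^3$ containing a smoothly embedded null-homologous annulus $A$ from $K$ to $-J$, with $\frac{1}{2}(\beta_2(Z)+|\sigma(Z)|)=n$, choose an arc $\alpha \subset A$ from $K$ to $-J$ and delete an open tubular neighborhood $N(\alpha) \cong B^4$ to obtain $V$. Exactly as in the proof of Proposition~\ref{prop:altdefslicedist}, $\partial V$ is obtained from $\partial Z$ by removing two $3$-balls and inserting an $S^2 \times I$ tube, yielding $\partial V = S^3 \# -S^3 = S^3$. Removing an open $4$-ball keeps $V$ simply connected and does not alter $\beta_2$ or $\sigma$. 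The surface $\Delta := A \setminus \operatorname{int}(N(\alpha))$ is a disk in $V$ with boundary $K\#-J$, and null-homology of $A$ rel $\partial Z$ transfers to null-homology of $\Delta$ rel $\partial V$ via the isomorphism $H_2(V,\partial V) \cong H_2(Z,\partial Z)$. The main technical obstacle is the homological book-keeping for the $3$-handle, but this reduces to the single observation that the attaching $2$-sphere $S$ is null-homologous in $V$ because it already bounds a $3$-ball in $\partial V$.
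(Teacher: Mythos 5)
Your argument is correct and is essentially the approach the paper intends: the proof of Proposition~\ref{prop:altdefhomologydistonC} is left to the reader there, and what you give is exactly the arc-deletion/3-handle argument from Proposition~\ref{prop:altdefslicedist}, supplemented with the routine checks that a $3$-handle attached along the connect-sum sphere (which bounds a ball in $\partial V$) changes neither $\pi_1$, $\beta_2$, $\sigma$, nor the vanishing of the relevant class in $H_2(\,\cdot\,,\partial)$ -- for the last point the cleanest justification is that, since the ambient manifolds are simply connected, a relative class vanishes iff it pairs trivially with $H_2$, and these intersection numbers are unchanged. The one detail to add is that the arc $\gamma\subset D^3$ must be taken unknotted (boundary-parallel); otherwise the two boundary circles of $A$ would be $K$ and $-J$ each connect-summed with the local knot of $\gamma$ rather than the knots themselves.
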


We will need the following:

\begin{prop}\label{prop:hnormequal} If the zero framed surgeries $M_J$ and $M_K$ are homology cobordant via a cobordism whose fundamental group is normally generated by either meridian then $\hnK=\|J\|_H$.
\end{prop}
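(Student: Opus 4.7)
The plan is to establish $\hnK \leq \|J\|_H$; the reverse inequality then follows by symmetry, since the hypothesis on the cobordism $X$ is symmetric in $J$ and $K$ (reversing $X$ produces a cobordism from $M_K$ to $M_J$ with the same $\pi_1$ property). Start with a smooth, compact, oriented 4-manifold $W_J$ realizing $\|J\|_H$ as in part (2) of Definition~\ref{def:hnormonC}, and form
\[
W_K \,:=\, W_J \cup_{M_J} X,
\]
where $X$ is the given homology cobordism from $M_J$ to $M_K$. Then $\partial W_K = M_K$. The goal is to verify that $W_K$ satisfies the conditions of Definition~\ref{def:hnormonC}(2) for $K$, and that $\frac{1}{2}(\beta_2(W_K)+|\sigma(W_K)|)=\frac{1}{2}(\beta_2(W_J)+|\sigma(W_J)|)$.

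For the $\pi_1$ condition I would apply van Kampen to the decomposition $W_K = W_J \cup X$ with intersection $M_J$. By choice of $W_J$, the group $\pi_1(W_J)$ is normally generated by the meridian of $J$; that meridian lies in $\pi_1(M_J)\subset \pi_1(X)$, where, by the hypothesis on $X$, it belongs to the normal closure of the meridian of $K$. Hence the contribution of $\pi_1(W_J)$ to $\pi_1(W_K)$ is contained in the normal closure of the meridian of $K$, and the contribution of $\pi_1(X)$ is likewise, again by the hypothesis on $X$. The requirement that $H_1(M_K) \to H_1(W_K)$ be an isomorphism falls out of a short Mayer--Vietoris computation using that both $H_1(M_J) \to H_1(W_J)$ and $H_1(M_J) \to H_1(X)$ are already isomorphisms.

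The geometric heart of the argument is to show that gluing on $X$ changes neither $\sigma$ nor $\beta_2$. For the signature, Novikov additivity yields $\sigma(W_K)=\sigma(W_J)+\sigma(X)$, and $\sigma(X)=0$ because $H_2(X)\cong \Z$ is generated by a class pushed in from $M_J \subset \partial X$, which can be isotoped off itself into the boundary collar and so has trivial self-intersection. For the Betti number, the relevant piece of Mayer--Vietoris
\[
H_2(M_J)\to H_2(W_J)\oplus H_2(X)\to H_2(W_K)\to H_1(M_J)\to H_1(W_J)\oplus H_1(X)
\]
has injective rightmost map (both summand maps are already isomorphisms of $\Z$), and the generator of $H_2(M_J)$ hits a generator of $H_2(X)$ under the homology-cobordism inclusion. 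The cokernel therefore collapses to $H_2(W_J)$, giving $\beta_2(W_K)=\beta_2(W_J)$.

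The main obstacle I anticipate is the careful bookkeeping for the $\pi_1$ condition: it is the only place where the two-sided normal generation hypothesis on $X$ is actually used, and both directions of that hypothesis are genuinely needed --- one to verify the $\pi_1$ requirement for $W_K$, and the symmetric one to handle the reverse inequality $\|J\|_H \leq \hnK$. Once the properties above are in place, the definition of the homology pseudo-norm immediately gives $\hnK \leq \|J\|_H$, and running the symmetric construction (using $-X$, which satisfies the same hypotheses) completes the proof.
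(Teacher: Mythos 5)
Your proposal is correct and follows essentially the same route as the paper: glue the given homology cobordism onto a $4$-manifold realizing the minimum in Definition~\ref{def:hnormonC}(2) and conclude by symmetry. The paper's proof is just terser, asserting $H_*$ is unchanged and leaving implicit the $\pi_1$, Mayer--Vietoris, and Novikov-additivity verifications that you spell out (correctly).
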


\begin{proof} Suppose $C$ is the homology cobordism and $W_K$ is a manifold with boundary $M_K$ which realizes the minimum $\frac{1}{2}(\beta_2(W_K)+|\sigma(W_K)|)$ as in the second version of Definition~\ref{def:hnormonC}. Then let $W_J=W_K\cup C$. Then $\partial(W_J)=M_J$ and $H_*(W_K)\cong H_*(W_J)$, so $\|J\|_H\leq \|K\|_H$. The result follows by symmetry.
\end{proof}

\end{subsection}

\begin{subsection}{The exotic slice genus norm and the homology norm on $\C^{ex}$}

We will discuss two norms on $\C^{ex}$, defined as above.

\begin{defn}\label{def:exslicegenusnorm}  The \textbf{exotic slice genus} of $K$, denoted by $\esnK$ or by  $g_{ex}(K)$  and is the minimum
$n$ such that $K$ is the boundary of a smoothly embedded compact oriented surface of genus $n$ in a smooth manifold that is homeomorphic to $B^4$. If the $4$-dimensional smooth Poincar\'{e} conjecture is true then $\|-\|_s^{ex}=\|-\|_s$.
\end{defn}

\begin{prop} $\|-\|_s^{ex}$ is a norm on the group $\C^{ex}$ and so  $d_s^{ex}(K,J)=\|K-J\|_s^{ex}$ is a metric on the set $\C^{ex}$. 
\end{prop}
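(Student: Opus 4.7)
The plan is to verify the three group norm axioms (GN1), (GN2), (GN3) from Definition~\ref{def:groupnorm}, after first checking that $\|-\|_s^{ex}$ is well-defined on concordance classes in $\C^{ex}$. The metric conclusion then follows immediately from the proposition on norms inducing metrics in Section~\ref{sec:metrics}. The argument parallels the proofs for $\|-\|_s$ on $\C$ and $\|-\|_H$ on $\C^{ex}$, but with ``exotic $B^4$'' in place of ``$B^4$''. The only subtlety will be ensuring that the manifolds we build by gluing or boundary-connect-summing are still homeomorphic to $B^4$.

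First I would show well-definedness. Suppose $K \sim J$ in $\C^{ex}$, so $K$ and $J$ cobound a smoothly embedded annulus $A$ in some smooth manifold $X$ homeomorphic to $S^3 \times [0,1]$, and suppose $J$ bounds a smoothly embedded surface $\Sigma$ of genus $g$ in a smooth manifold $V$ homeomorphic to $B^4$. Glue $V$ to $X$ along the $S^3$ boundary component containing $J$; the resulting manifold $V \cup_{S^3} X$ is smooth and, topologically, is homeomorphic to $B^4 \cup_{S^3}(S^3\times I) \cong B^4$. The surface $\Sigma \cup_J A$ is smoothly embedded of genus $g$ and has boundary $K$, so $\|K\|_s^{ex}\leq \|J\|_s^{ex}$. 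Symmetry gives equality.

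Now for the norm axioms. For (GN1), positivity is immediate from the definition, and $\|K\|_s^{ex}=0$ exactly when $K$ bounds a smoothly embedded disk in a smooth manifold homeomorphic to $B^4$, which is the definition of $K=0$ in $\C^{ex}$. For (GN2), let $V_K, V_J$ be exotic $B^4$'s realizing the exotic slice genera of $K$ and $J$, containing surfaces $\Sigma_K, \Sigma_J$. Form the boundary connect sum $V_K \natural V_J$, which is smooth and homeomorphic to $B^4 \natural B^4 \cong B^4$. Choose the $3$-ball neighborhood on $\partial V_K\cup \partial V_J$ used for the boundary sum so that it meets $\Sigma_K$ and $\Sigma_J$ in small arcs, and band $\Sigma_K$ to $\Sigma_J$ along an arc in the identification $D^3$; the resulting smooth surface has genus $g_{ex}(K)+g_{ex}(J)$ and boundary $K\#J$. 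For (GN3), reversing the orientation on the pair $(V_K,\Sigma_K)$ and then reflecting gives a smooth surface of the same genus, bounded by $-K=r\overline{K}$, in a smooth manifold still homeomorphic to $B^4$, so $\|-K\|_s^{ex}=\|K\|_s^{ex}$.

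No step should be a serious obstacle; the only thing to be mildly careful about is that the topological category behaves well under the gluing and boundary-summing operations (so the union of two manifolds homeomorphic to $B^4$ along a $3$-ball in the boundary is still homeomorphic to $B^4$, etc.), which is standard. Having verified $\|-\|_s^{ex}$ is a norm on $\C^{ex}$, the proposition in Section~\ref{sec:metrics} that every group norm induces a right-invariant metric via $d(x,y)=\|xy^{-1}\|$ immediately yields the metric $d_s^{ex}$ on $\C^{ex}$.
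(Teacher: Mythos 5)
Your proposal is correct and follows essentially the same route as the paper: verify (GN1)--(GN3), using the boundary connected sum of the two exotic $4$-balls and surfaces for sub-additivity and the mirror/reverse symmetry for (GN3), then invoke the general fact that a group norm induces a metric. The additional well-definedness check via gluing the concordance to the surface is a fine (and correct) supplement that the paper leaves implicit, but it does not change the approach.
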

\begin{proof} Clearly $\esnK\geq 0$, with equality if and only if $K$ is slice in a smooth $4$-manifold that is homeomorphic to $B^4$. This is equivalent to $K=0$ in $\C^{ex}$. This establishes ($GN1$). 

Suppose $K_1$ and $K_2$ bound embedded surfaces $F_1$ and $F_2$ of genera $n_1$ and $n_2$ in $4$-manifolds $\mathcal{B}_1$, $\mathcal{B}_2$,  respectively, that are homeomorphic to $B^4$. Then $K_1\# K_2$ bounds in the boundary connected sum $\mathcal{B}_1\natural\mathcal{B}_2$, the boundary connected sum of $F_1$ and $F_2$. The latter has genus $n_1+n_2$. This establishes ($GN2$).

Since the exotic slice genus of $K$ is clearly the same as that of  its mirror image and that of its reverse, ($GN3$) holds.
\end{proof}

The metric $d_s^{ex}$ has an alternative definition whose verification is the same as that of Proposition~\ref{prop:altdefslicedist}.
\begin{prop}\label{prop:altdefexdist}  $d_s^{ex}(K,J)$ is equal to the mimimal genus of a compact oriented surface   properly embedded in a smooth $4$-manifold homeomorphic to $S^3\times  [0,1]$, whose boundary is the disjoint union of $K\hookrightarrow S^3\times \{0\}$ and $-J\hookrightarrow S^3\times \{1\}$.  
\end{prop}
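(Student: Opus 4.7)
The plan is to run, step by step, the argument of Proposition~\ref{prop:altdefslicedist}, verifying that the cut-and-paste operations used there preserve the property of being homeomorphic (though not necessarily diffeomorphic) to $B^4$ or $S^3\times[0,1]$. Since the definitions of $\|\cdot\|_s^{ex}$ and $d_s^{ex}$ only require homeomorphism to these standard models, this is precisely what is needed.

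First I would handle the direction $d_s^{ex}(K,J)\le g$. Given a smoothly properly embedded oriented genus-$g$ surface $\Sigma$ in a smooth $\mathcal{W}$ homeomorphic to $S^3\times[0,1]$ with $\partial\Sigma=K\sqcup -J$, pick a smooth arc $\alpha\subset\Sigma$ from a point of $K$ to a point of $-J$ and let $\nu(\alpha)\cong B^4$ be its smooth tubular neighborhood in $\mathcal{W}$. Set $\mathcal{B}:=\mathcal{W}\setminus\nu(\alpha)^\circ$ and $\Sigma':=\Sigma\setminus\nu(\alpha)^\circ$. Then $\Sigma'$ is smoothly properly embedded in $\mathcal{B}$, still has genus $g$, and has boundary $K\#(-J)$, since the strip $\Sigma\cap\nu(\alpha)$ acts as a band for the connected sum. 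The key topological claim is that $\mathcal{B}$ is homeomorphic to $B^4$: using a homeomorphism $\mathcal{W}\to S^3\times[0,1]$ and uniqueness (up to homeomorphism) of topological regular neighborhoods of a properly embedded arc, one reduces to the standard arc $\{p\}\times[0,1]\subset S^3\times[0,1]$, whose complement is $(S^3\setminus B^3)\times[0,1]\cong B^4$. Thus $\|K-J\|_s^{ex}\le g$.

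For the reverse inequality I would essentially reverse this construction. Given a genus-$g$ surface $\Sigma'\subset\mathcal{B}$ with $\partial\Sigma'=K\#(-J)\subset\partial\mathcal{B}\cong S^3$, the connect-sum decomposition provides a $2$-sphere $S_{\mathrm{sep}}\subset\partial\mathcal{B}$ meeting $K\#(-J)$ transversely in two points near the connect-sum band. Attach a smooth $3$-handle $h=D^3\times D^1$ to $\mathcal{B}$ along a tubular neighborhood $S_{\mathrm{sep}}\times D^1\subset\partial\mathcal{B}$, forming the smooth $4$-manifold $\mathcal{W}:=\mathcal{B}\cup h$. Topologically $B^4\cup h$ is the standard $0$-handle/$3$-handle decomposition of $S^3\times[0,1]$, so $\mathcal{W}$ is homeomorphic to $S^3\times[0,1]$, and its two boundary $3$-spheres contain $K$ and $-J$ respectively. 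Extend $\Sigma'$ across $h$ by attaching the ``vertical'' $2$-disk $\gamma\times D^1\subset D^3\times D^1$, where $\gamma\subset D^3$ is an arc with endpoints at the two intersection points of $K\#(-J)$ with $S_{\mathrm{sep}}$. An Euler-characteristic count shows that this band-attachment preserves the genus while converting the boundary from $K\#(-J)$ to $K\sqcup(-J)$, producing the required $\Sigma\subset\mathcal{W}$.

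The main (modest) obstacle is the topological verification that these cut-and-paste operations respect the homeomorphism type of the standard models; the smoothing issues that usually complicate arguments in the exotic category are sidestepped here precisely because the ambient manifolds are only required to be homeomorphic, not diffeomorphic, to $B^4$ or $S^3\times[0,1]$.
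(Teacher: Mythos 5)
Your argument is correct and is essentially the paper's own proof: the paper simply says the verification is the same as for Proposition~\ref{prop:altdefslicedist} (delete a tubular neighborhood of an arc on the surface joining the two boundary knots, and reverse the process by gluing back that neighborhood, i.e.\ attaching a $3$-handle along the connected-sum sphere), which is exactly what you carry out, with the extra and welcome check that the cut-and-paste preserves the homeomorphism types of $B^4$ and $S^3\times[0,1]$. One small caveat on that check: uniqueness of topological regular neighborhoods by itself only compares neighborhoods of the \emph{same} arc, so to identify the complement you should either invoke topological unknotting of locally flat arcs in codimension three or, more simply, observe that the complement is a compact contractible smooth $4$-manifold with boundary $S^3$ and apply Freedman; this is a detail the paper leaves implicit.
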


\begin{prop}  The homology norm, as defined in Definition~\ref{def:hnormonC}  induces a norm on the group $\C^{ex}$ and so  $d_H(K,J)=\|K-J\|_H$ induces a metric on  $\C^{ex}$. 
\end{prop}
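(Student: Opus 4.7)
The plan is to simply re-run the proof of Proposition~\ref{prop:hnormisnorm}, noting that the one obstacle to getting a genuine norm on $\C$, namely the inability to rule out slice disks in exotic $4$-balls, disappears by definition once we pass to $\C^{ex}$.

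First I would verify that $\|-\|_H$ is well-defined on $\C^{ex}$. If $K$ and $K'$ represent the same class in $\C^{ex}$, they cobound a smooth annulus $A$ in some smooth $4$-manifold $X$ homeomorphic to $S^3\times[0,1]$. Removing an open tubular neighborhood of $A$ from $X$ produces a smooth cobordism $C$ from $M_K$ to $M_{K'}$ which (since $X$ is a $\Z$-homology $S^3\times[0,1]$ and $A$ is an annulus) is a $\Z$-homology cobordism, and whose fundamental group is normally generated by either meridian. Proposition~\ref{prop:hnormequal} then gives $\|K\|_H=\|K'\|_H$, so the pseudo-norm descends to $\C^{ex}$.

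Next, properties (GN2) and (GN3) follow by exactly the same arguments used in Proposition~\ref{prop:hnormisnorm}: boundary connected sum of manifolds realizing the minima for $K$ and $J$ establishes sub-additivity, and reversing orientations realizes the symmetry. Neither argument cares which of the four concordance equivalence relations is being used.

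The only new content is in (GN1). Positivity is immediate from the definition. For nondegeneracy, suppose $\|K\|_H=0$. Using the first formulation of Definition~\ref{def:hnormonC}, there is a smooth, compact, simply-connected $4$-manifold $V$ with $\partial V=S^3$ and $\beta_2(V)=|\sigma(V)|=0$ in which $K$ bounds a smoothly embedded disk. Then $V$ is a smooth, compact, contractible $4$-manifold with boundary $S^3$, and by Freedman's theorem $V$ is homeomorphic to $B^4$. Hence $K$ bounds a smoothly embedded disk in a (possibly exotic) smooth manifold homeomorphic to $B^4$, which means $K=0$ in $\C^{ex}$. This is precisely the step where the conclusion genuinely improves over $\C$: on $\C$ one cannot rule out that $V$ is exotic, but on $\C^{ex}$ that possibility is exactly what being trivial means. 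Thus $\|-\|_H$ is a norm on $\C^{ex}$, and the induced $d_H$ is a metric. There is no substantive obstacle; the ``hard part'' is really just recognizing that Freedman's work plus the definition of $\C^{ex}$ remove the sole defect in the $\C$ argument.
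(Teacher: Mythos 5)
Your proof is correct and takes essentially the same route as the paper: nondegeneracy is exactly the observation that $\|K\|_H=0$ forces $K$ to be slice in a contractible, hence (by Freedman) possibly exotic, $4$-ball, which is precisely triviality in $\C^{ex}$, while (GN2) and (GN3) are carried over unchanged from Proposition~\ref{prop:hnormisnorm}. Your additional paragraph checking well-definedness on concordance classes via Proposition~\ref{prop:hnormequal} is a harmless elaboration of a point the paper leaves implicit.
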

\begin{proof}  First we establish ($GN1$). Clearly $\hnK\geq 0$ with equality if and only if $K$  is slice in a possibly exotic $4$-ball so $K=0$ in $\C^{ex}$. Otherwise the proof is identical to that of Proposition~\ref{prop:hnormisnorm}.
\end{proof}

The metric $d_H$ has an alternative definition, whose proof we leave to the reader:
\begin{prop}\label{prop:altdefhomologydist}  $d_{H}(K,J)$ is equal to the mimimum of  $\frac{1}{2}(\beta_2(Z)+|\sigma(Z)|)$ where $Z$ ranges over all smooth, oriented, compact, simply-connected $4$-manifolds with $\partial Z=S^3\prod -S^3$ in which $K$ and $J$ are concordant, that is in which $K\hookrightarrow S^3\times {0}$ and $-J\hookrightarrow S^3\times{1}$ cobound  a null-homologous smoothly embedded annulus.
\end{prop}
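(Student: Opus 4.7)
The argument follows the same cut-and-paste strategy as for Proposition~\ref{prop:altdefhomologydistonC}, whose proof is also left to the reader: the definition of the homology norm uses the same type of $4$-manifold in the $\C^{ex}$ setting, so there is no new ingredient. I would establish the two inequalities separately.

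\emph{The inequality $d_H(K,J) \leq \min \tfrac{1}{2}(\beta_2(Z) + |\sigma(Z)|)$.} Given a simply-connected $Z$ with $\partial Z = S^3 \sqcup -S^3$ and a null-homologous annulus $A$ with $\partial A = K \sqcup -J$, choose a smooth arc $\gamma \subset A$ joining the two boundary components and delete an open tubular neighborhood $N$ of $\gamma$ in $Z$. The neighborhood $N$ is a $4$-ball meeting $\partial Z$ in two $3$-disks, so $V := Z \setminus N$ has boundary obtained from $\partial Z$ by connecting the two $S^3$'s along the lateral $S^2 \times I$ of $\partial N$; this is diffeomorphic to $S^3$. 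The surface $A \setminus N$ becomes a smooth disk $\Delta \subset V$ with $\partial \Delta = K \# (-J) = K - J$. Removing an open $4$-ball preserves simple-connectivity, $\beta_2$, and the intersection form, and a direct chain-level argument shows that the null-homology of $A$ passes to null-homology of $\Delta$ in $H_2(V, \partial V)$. Thus $V$ realizes $\|K - J\|_H \leq \tfrac{1}{2}(\beta_2(Z) + |\sigma(Z)|)$ as required.

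\emph{The reverse inequality.} Given a simply-connected $V$ with $\partial V = S^3$ and a null-homologous smooth disk $\Delta \subset V$ with $\partial \Delta = K - J$, glue $V$ to the pair-of-pants cobordism $C$ (obtained by removing three disjoint open $4$-balls from $S^4$) along the $S^3$ boundary component of $C$ containing a copy of $K \# -J$. Inside $C$, this $K \# -J$ cobounds a smooth genus-zero surface $P$ with $K \sqcup -J$ on the other two boundary $3$-spheres; this is the standard band-sum cobordism. The manifold $Z := V \cup_{S^3} C$ has $\partial Z = S^3 \sqcup -S^3$ and contains the smooth annulus $A := \Delta \cup_{K \# -J} P$ with boundary $K \sqcup -J$. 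Since $C$ is simply-connected with $H_2(C) = 0$, van Kampen gives $\pi_1(Z) = 1$ and Mayer--Vietoris gives an intersection-form isomorphism $H_2(V) \cong H_2(Z)$, so $\tfrac{1}{2}(\beta_2(Z) + |\sigma(Z)|) = \tfrac{1}{2}(\beta_2(V) + |\sigma(V)|)$. That $A$ is null in $H_2(Z, \partial Z)$ follows from splicing the null-bounding $3$-chains for $\Delta$ in $V$ and for $P$ in $C$ (using that $H_2(C, \partial C) = 0$ by Poincar\'{e}--Lefschetz duality).

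The main subtlety I expect is the chain-level verification of the null-homology condition after each gluing or cutting operation: one has to ensure that the boundary terms end up in the correct components of $\partial Z$ or $\partial V$, and that the two surface pieces fit together into an embedded annulus with the correct orientation. These checks are routine given the vanishing of $H_2$ for the excised or attached pieces.
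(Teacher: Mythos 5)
Your proposal is correct and follows the route the paper intends (it leaves this proof to the reader, pointing to the cut-along-an-arc argument of Proposition~\ref{prop:altdefslicedist} together with the homological bookkeeping used for the two definitions of $\|-\|_H$): cutting a boundary-to-boundary arc neighborhood turns the null-homologous annulus into a null-homologous disk for $K\#-J$ without changing $\pi_1$, $\beta_2$ or $\sigma$, and your pair-of-pants gluing is just the standard reversal of that cut, with the deferred checks (the cut boundary really is $K\#-J$ because the push-offs of the arc are standard strands in the tube, and the null-homology transfers since $H_2(C)=H_2(C,\partial C)=0$, or by intersection-pairing duality) all routine. No gaps.
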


\end{subsection}

\begin{subsection}{The topological slice genus norm and the homology norm on $\C^{top}$}

Just as in the smooth category, there are norms on $\C^{top}$ given by the ``slice genus'' and the ``homology norm''. The proofs that these are indeed norms are straightforward.

\begin{defn}\label{def:topslicegenusnorm}  $\topnK$, also denoted $g_{top}(K)$,  the \textbf{topological slice genus} of $K$, and is the minimum
$n$ such that $K$ is the boundary of a compact oriented surface of genus $n$ topologically and flatly embedded in a topological manifold that is homeomorphic to $B^4$. This induces the metric $d_s^{top}$.
\end{defn}

\begin{defn}\label{def:tophnorm}  $\hnK^{top}$, the \textbf{topological homology norm of} $K$, has two equivalent definitions:
\begin{enumerate}
\item $\hnK^{top}$ is the minimum of $\frac{1}{2}(\beta_2(V)+|\sigma(V)|)$ where $V$ ranges over all  oriented, compact, simply-connected topological $4$-manifolds with $\partial V=S^3$ in which $K$ is a slice, that is, in which $K$ bounds a topologically flatly embedded disk that represents $0$ in $H_2(V, \partial V)$;
\item $\hnK^{top}$ is the minimum of $\frac{1}{2}(\beta_2(W)+|\sigma(W)|)$ where $W$ ranges over over all compact, oriented topological $4$-manifolds $W$ whose boundary is $M_K$, whose $\pi_1$ is normally generated by the meridian of $K$ and for which $H_1(M_K)\to H_1(W)$ is an isomorphism. 
\end{enumerate}
This induces the metric $d_H^{top}$.
\end{defn}

By the same proof as in Proposition~\ref{prop:hnormequal} we have:
\begin{prop}\label{prop:hnormtopequal} If the zero framed surgeries $M_J$ and $M_K$ are topologically homology cobordant via a cobordism whose fundamental group is normally generated by either meridian then $\hnK^{top}=\|J\|_H^{top}$.
\end{prop}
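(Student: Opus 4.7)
The plan is to mimic the proof of Proposition~\ref{prop:hnormequal} essentially verbatim, carried out in the topological category rather than the smooth one. The only thing to check is that all the ingredients — gluing of topological $4$-manifolds, Mayer--Vietoris, Novikov additivity of signature, and Seifert--van Kampen — are available topologically, which they are.

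Specifically, let $C$ be the topological homology cobordism between $M_K$ and $M_J$ whose fundamental group is normally generated by either meridian. Let $W_K$ be a compact oriented topological $4$-manifold with $\partial W_K=M_K$ realizing $\|K\|_H^{top}=\tfrac{1}{2}(\beta_2(W_K)+|\sigma(W_K)|)$, with $\pi_1(W_K)$ normally generated by the meridian $\mu_K$, and with $H_1(M_K)\to H_1(W_K)$ an isomorphism. Form $W_J=W_K\cup_{M_K} C$, a compact oriented topological $4$-manifold with $\partial W_J=M_J$.

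Next I would verify that $W_J$ satisfies the hypotheses of the second part of Definition~\ref{def:tophnorm}, i.e.\ that $\pi_1(W_J)$ is normally generated by the meridian $\mu_J$ of $J$, and that $H_1(M_J)\to H_1(W_J)$ is an isomorphism. The first follows from Seifert--van Kampen: $\pi_1(W_J)$ is generated by the images of $\pi_1(W_K)$ and $\pi_1(C)$, each of which is normally generated by a meridian; and since $C$ is a homology cobordism the two meridians $\mu_K$ and $\mu_J$ are homologous, hence (after absorbing commutators using the fact that each is in the normal closure of the other through $C$) $\pi_1(W_J)$ is normally generated by $\mu_J$. The homological claim follows from Mayer--Vietoris applied to $W_J=W_K\cup C$ combined with the fact that $C$ is a homology cobordism: the inclusion $M_J\hookrightarrow C$ is an isomorphism on homology, so $H_*(W_J)\cong H_*(W_K)$, and in particular $H_1(M_J)\to H_1(W_J)$ is an isomorphism.

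Finally, the same Mayer--Vietoris computation gives $\beta_2(W_J)=\beta_2(W_K)$, and Novikov additivity of the signature (valid for topological $4$-manifolds) together with $\sigma(C)=0$ (which holds because $C$ has the rational homology of a product) gives $\sigma(W_J)=\sigma(W_K)$. Thus
\[
\|J\|_H^{top}\le \tfrac{1}{2}(\beta_2(W_J)+|\sigma(W_J)|)=\tfrac{1}{2}(\beta_2(W_K)+|\sigma(W_K)|)=\|K\|_H^{top}.
\]
Reversing the roles of $J$ and $K$ (using the inverted cobordism $-C$, whose fundamental group is still normally generated by either meridian) gives the opposite inequality, so $\|K\|_H^{top}=\|J\|_H^{top}$. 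The only step that requires a moment's thought is the verification that $\pi_1(W_J)$ is normally generated by $\mu_J$, but this is immediate from the hypothesis that $\pi_1(C)$ is normally generated by \emph{either} meridian; everything else is routine.
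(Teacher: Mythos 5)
Your proposal is correct and is essentially the paper's own argument: the paper proves this by declaring it to be "the same proof as Proposition~\ref{prop:hnormequal}," namely gluing a minimizing $W_K$ to the cobordism $C$ along $M_K$ and appealing to symmetry, which is exactly what you do. Your extra verifications (Seifert--van Kampen for the meridian condition, Mayer--Vietoris for $\beta_2$, Novikov additivity with $\sigma(C)=0$) simply spell out details the paper leaves implicit.
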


\end{subsection}

\begin{subsection}{The slice genus norm and the homology norm on $\C^{\frac{1}{n}}$}

The reader may note that we could have defined another version of $\C^{\frac{1}{n}}$, in the topological category, but we resist doing so.
We will discuss two norms on $\C^{\frac{1}{n}}$. The proofs that these are norms are straightforward.

\begin{defn}\label{def:fracslicegenusnorm}  The $\Z[1/n]$-\textbf{slice genus} of $K$, denoted by $\|K\|_{1/n}$ or by  $g_{1/n}(K)$  is a norm given by the minimum
$n$ such that $K$ is the boundary of a smoothly embedded compact oriented surface of genus $n$ in a smooth manifold that is $\Z[\frac{1}{n}]$-homology equivalent to $B^4$. Let $d_s^{1/n}$ denote the induced metric.
\end{defn}

\begin{defn}\label{def:frachnorm}  $\hnK^{\frac{1}{n}}$, the $\Z[1/n]$-\textbf{homology norm of} $K$, has two equivalent definitions:
\begin{enumerate}
\item the minimum of $\frac{1}{2}(\beta_2(V)+|\sigma(V)|)$ where $V$ ranges over all smooth, oriented, compact,  $4$-manifolds with $H_1(V;\Z[\frac{1}{n}])=0$ and $\partial V=S^3$, in which $K$ is a slice, that is, in which $K$ bounds a smoothly embedded disk that represents $0$ in $H_2(V, \partial V)$;
\item the minimum of $\frac{1}{2}(\beta_2(W)+|\sigma(W)|)$ where $W$ ranges over over all smooth, compact, oriented $4$-manifolds $W$ whose boundary is $M_K$, whose  for which $H_1(M_K;Z[\frac{1}{n}])\to H_1(W;Z[\frac{1}{n}])$ is an isomorphism. 
\end{enumerate}
Let $d_H^{1/n}$ denote the induced metric.
\end{defn}

By the same proof as in Proposition~\ref{prop:hnormequal} we have:
\begin{prop}\label{prop:hnormoneovernequal} If the zero framed surgeries $M_J$ and $M_K$ are smoothly $\Z[1/n]$-homology cobordant then $\hnK^{1/n}=\|J\|_H^{1/n}$.
\end{prop}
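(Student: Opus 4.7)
The plan is to adapt the proof of Proposition~\ref{prop:hnormequal} to the $\Z[\frac{1}{n}]$-setting, with the additional task of checking that gluing a $\Z[\frac{1}{n}]$-homology cobordism to a $4$-manifold preserves both $\beta_2$ and $|\sigma|$. First I would let $C$ denote the given smooth $\Z[\frac{1}{n}]$-homology cobordism from $M_K$ to $M_J$, take $W_K$ to be a smooth, compact, oriented $4$-manifold realizing $\|K\|_H^{1/n}$ as in the second version of Definition~\ref{def:frachnorm}, and form $W_J:=W_K\cup_{M_K}C$, so that $\partial W_J=M_J$. It then suffices to show that $W_J$ satisfies the defining conditions for $\|J\|_H^{1/n}$ with $\tfrac{1}{2}(\beta_2(W_J)+|\sigma(W_J)|)=\tfrac{1}{2}(\beta_2(W_K)+|\sigma(W_K)|)$; for then $\|J\|_H^{1/n}\leq\|K\|_H^{1/n}$, and the reverse inequality follows by symmetry applied to the reversed cobordism.

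For the homological conditions I would apply excision. Because both inclusions $M_K\hookrightarrow C$ and $M_J\hookrightarrow C$ are $\Z[\frac{1}{n}]$-homology equivalences, $H_*(C,M_K;\Z[\frac{1}{n}])=0$, so by excision $H_*(W_J,W_K;\Z[\frac{1}{n}])=0$. In particular the inclusion-induced map $H_1(W_K;\Z[\frac{1}{n}])\to H_1(W_J;\Z[\frac{1}{n}])$ is an isomorphism; composing with the chain of isomorphisms $H_1(M_J;\Z[\frac{1}{n}])\cong H_1(C;\Z[\frac{1}{n}])\cong H_1(M_K;\Z[\frac{1}{n}])\cong H_1(W_K;\Z[\frac{1}{n}])$ gives the required isomorphism $H_1(M_J;\Z[\frac{1}{n}])\to H_1(W_J;\Z[\frac{1}{n}])$. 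The identical excision argument with $\Q$-coefficients, valid because $\Z[\frac{1}{n}]\subset\Q$, yields $\beta_2(W_J)=\beta_2(W_K)$.

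The step I expect to require the most care is the signature invariance. Here I would invoke Novikov additivity to write $\sigma(W_J)=\sigma(W_K)+\sigma(C)$, and then appeal to the standard fact that any smooth $\Q$-homology cobordism $C$ between closed oriented $3$-manifolds satisfies $\sigma(C)=0$. The latter can be verified via the long exact sequence of the pair $(C,\partial C)$ with $\Q$-coefficients together with Poincar\'e--Lefschetz duality: these show that the image of $H_2(C;\Q)$ in $H_2(C,\partial C;\Q)$ vanishes, so the intersection form on $H_2(C;\Q)$ lies in its own radical and contributes nothing to $\sigma(C)$. This yields $\sigma(W_J)=\sigma(W_K)$ and completes the verification that $\|J\|_H^{1/n}\leq\|K\|_H^{1/n}$, after which symmetry gives equality.
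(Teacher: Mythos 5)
Your proposal is correct and follows the same route as the paper, which simply glues $W_J=W_K\cup_{M_K}C$ and invokes the same argument as Proposition~\ref{prop:hnormequal}, concluding by symmetry. The only difference is that you spell out details the paper leaves implicit, namely the excision argument for $\beta_2$ and the Novikov additivity plus vanishing-signature-of-a-homology-cobordism argument for $|\sigma|$, both of which are exactly the right justifications.
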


\end{subsection}

\begin{subsection}{Comparison of metrics}

The homology metric is not only different from  $d_s$ and $d_s^{ex}$, but is\textit{ very} different, as quantified by the following result. Surprisingly, the elements of this proof have only very recently come to light.

\begin{prop}\label{prop:metricsaredifferent} Neither of the functions $i: (\C,d_s)\to (\C,d_H)$ and $j: (\C^{ex},d_s^{ex})\to (\C^{ex},d_H)$, induced by the identity map, is a quasi-isometry.
\end{prop}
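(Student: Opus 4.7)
The plan is to establish that both identity maps are $1$-Lipschitz, and then to exhibit a sequence of knots violating the reverse quasi-isometric inequality.

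First I would verify the easy bound $\|K\|_H \leq g_s(K)$. Given a smoothly embedded genus-$g$ surface $F$ in $B^4$ bounding $K$, form $V = B^4 \# g (S^2 \times S^2)$ and tube each of the $g$ handles of $F$ onto an $S^2 \times \{\ast\}$ summand of one of the $g$ copies of $S^2 \times S^2$. The result is a smoothly embedded null-homologous disk $\Delta$ in $V$ with $\partial \Delta = K$; since $V$ is simply connected with $\partial V = S^3$, $\beta_2(V) = 2g$, and $\sigma(V) = 0$, this gives $\|K\|_H \leq g_s(K)$. The same construction in a smooth manifold homeomorphic to $B^4$ yields $\|K\|_H \leq g_{ex}(K)$. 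By right-invariance, $d_H(K,J) \leq d_s(K,J)$ and $d_H(K,J) \leq d_s^{ex}(K,J)$, so both $i$ and $j$ are $1$-Lipschitz. Consequently the right-hand inequality in the definition of quasi-isometry is automatic.

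To rule out the quasi-isometry it remains to refute the opposite inequality: I would produce a sequence $\{K_n\}$ with $g_s(K_n) \to \infty$ (respectively $g_{ex}(K_n) \to \infty$) while $\|K_n\|_H$ remains uniformly bounded, which violates every affine estimate $g_s(K) \leq A\|K\|_H + B$.

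I would draw such a sequence from the (then very) recent constructions of topologically slice knots with arbitrarily large Ozsv\'ath--Szab\'o $\tau$-invariant, due to Hedden--Livingston--Ruberman, Hom, and others. The invariant $\tau$ is a lower bound for $g_s$ and equally for $g_{ex}$, since the derivation of the bound uses only smoothness of the Seifert surface, not of the ambient $B^4$; a possibly exotic smooth structure on the $B^4$ makes no difference to the Heegaard Floer obstruction. Hence $g_s(K_n), g_{ex}(K_n) \to \infty$ for this sequence.

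The main obstacle is then the uniform upper bound on $\|K_n\|_H$: one must produce, for each $n$, an explicit smooth, simply connected $4$-manifold $V_n$ with $\partial V_n = S^3$ and uniformly bounded $\beta_2(V_n) + |\sigma(V_n)|$, in which $K_n$ bounds a smoothly embedded null-homologous disk. This is the delicate recent ingredient the paper alludes to: topologically slice knots need not be smoothly slice in any $4$-manifold with bounded homological complexity, so one must select the witnesses so as to simultaneously preserve a uniform smooth-homological bound while pushing $\tau$ to infinity (for instance via a satellite construction based on a knot whose slice-disk situation lives in a fixed small $4$-manifold). Once such $K_n$ are in hand one has $g_s(K_n)/\|K_n\|_H \to \infty$, and neither $i$ nor $j$ can be a quasi-isometry.
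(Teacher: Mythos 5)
Your first half is fine: the bound $\|K\|_H\leq g_{ex}(K)\leq g_s(K)$ (so that $i$ and $j$ are $1$-Lipschitz and only the lower quasi-isometry inequality is at issue) is essentially the comparison the paper itself records in Proposition~\ref{prop:metricsarerelated}, though your phrase ``tube each handle onto an $S^2\times\{\ast\}$'' should really be stated as in the paper: surger the ambient manifold along half a symplectic basis of curves on the surface (these curves are null-homotopic, hence the surgeries are connected sums with $S^2\times S^2$ or its twisted partner), after which the surface compresses ambiently to a null-homologous disk, giving $\beta_2=2g$, $\sigma=0$.

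The genuine gap is in the second half: you never produce the required family, and the family you propose is not known to work. Topologically slice knots with large $\tau$ (Hedden--Livingston--Ruberman, Hom) certainly have $g_s, g_{ex}\to\infty$, but $\|-\|_H$ as defined here is a \emph{smooth} invariant --- it requires a smoothly embedded null-homologous disk in a smooth simply-connected $4$-manifold of bounded $\frac{1}{2}(\beta_2+|\sigma|)$ --- and topological sliceness gives no control whatsoever on this quantity (it only bounds $\|-\|_H^{top}$). You acknowledge this obstacle yourself, but then only gesture at ``selecting the witnesses'' appropriately, which is precisely the content that must be supplied. The paper's proof fills this hole with a specific mechanism: take $K_i=P^i(T)$, where $T$ is the right-handed trefoil and $P$ is a strong winding number one pattern with $P(U)$ unknotted. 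By the result of Cochran--Davis--Ray, the zero-framed surgeries $M_J$ and $M_{P(J)}$ are smoothly homology cobordant via a cobordism whose $\pi_1$ is normally generated by either meridian, so Proposition~\ref{prop:hnormequal} forces $\|K_i\|_H=\|T\|_H$ for all $i$ (uniform boundedness comes from invariance under the operator, not from any sliceness property), while Ray's computation shows $g_s(K_i)=g_{ex}(K_i)=i+1$. Without an argument of this kind --- some device that pins down $\|K_i\|_H$ while the slice genus grows --- your proof does not go through.
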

\begin{proof} It suffices to exhibit a family of knots $\{K_i~| ~ i\in \Z_+\}$  on which both the slice genus, $g_s$, and the exotic slice genus, $g_{ex}$ are unbounded functions of $i$, but on which the homology norm is a bounded function. For then there can be no constants $A$ and $B$ satisfying
\begin{equation}\label{eq:defquasiisom2}
\frac{1}{A}g_*(K_i)-B\equiv \frac{1}{A}d_s^*(K_i,U)-B\leq d_H(K_i,U)\equiv \|K_i\|_H
\end{equation}
for $*=s$ or $*=ex$ since the right-hand side is a bounded function of $i$ whereas the left-hand side is not. 

Such a family of knots was exhibited in ~\cite[Prop. 3.4]{Ray4} (building on ~\cite{CFHH}). There it was shown that for a fixed satellite operator $P$ (in fact the mirror image of the one shown in of Figure~\ref{fig:satelliteexample}), and for $T$ the right-handed trefoil knot, the family $K_i=P^i(T)$, $i\geq 0$ has the property that the slice genus and the exotic slice genus of $K_i$ are equal to $i+1$.  Therefore both $\|-\|_s$ and $\|-\|^{ex}_s$ are unbounded on this family. On the other hand, since $P$ is a strong winding number one operator with $P(U)$ unknotted, by ~\cite[Corollary 4.4]{CDR}, for any knot $J$, the zero framed surgeries $M_J$ and $M_{P(J)}$, are smoothly homology cobordant via a cobordism whose fundamental group is normally generated by either meridian. Thus, by Proposition~\ref{prop:hnormequal}, $\|J\|_H=\|P(J)\|_H$ for any knot $J$. In particular, $\|T\|_H=\|P(T)\|_H=\|P(P(T)\|_H$, et cetera. Thus $\|K_i\|_H=\|T\|_H$ for each $i$.
\end{proof}

We now compare all of the metrics. Since each $d_*$ defined above has a well-defined meaning for any pair of (\textit{isotopy} classes of) knots, their values can be compared, even though the functions $d_*$ only give metrics on the appropriate set of concordance classes.
Below, an inequality means for every pair of knots $J$ and $K$, while a strict inequality means that, in addition, there exist knots $K$ and $J$ for which the metrics differ. 

\begin{prop}\label{prop:metricsarerelated}  \begin{equation*}
\begin{array}{l}
d_H^{top}\leq d_{H}< d_s^{ex}\leq d_{s},\\
d_H^{top}\leq d_s^{top}< d_s^{ex},\\
d_H^{1/n}< d_s^{1/n}\leq d_s^{ex},\\
d_H^{1/n}\leq d_H
\end{array}
\end{equation*}
\end{prop}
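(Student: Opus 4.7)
The plan is to verify each inequality separately, grouping them by proof technique.

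\textbf{Inclusion-of-class inequalities.} Several inequalities are immediate from the observation that one norm is an infimum over a subclass of the $4$-manifolds used to define the other. Thus $d_H^{top}\leq d_H$ and $d_s^{top}\leq d_s^{ex}$ because smooth manifolds (and smoothly embedded surfaces) are in particular topological manifolds (and topologically locally flat surfaces). Next, $d_s^{ex}\leq d_s$ because the standard $B^4$ is a smooth manifold homeomorphic to $B^4$, and $d_s^{1/n}\leq d_s^{ex}$ because any smooth manifold homeomorphic to $B^4$ has the $\Z[1/n]$-homology of $B^4$. Finally, $d_H^{1/n}\leq d_H$ because the $\pi_1$-condition of Definition~\ref{def:hnormonC} is dropped in Definition~\ref{def:frachnorm} and integral $H_1$-isomorphisms induce $\Z[1/n]$-homology isomorphisms, so every admissible $W$ for $d_H$ is admissible for $d_H^{1/n}$.

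\textbf{Homology norm bounded by slice-genus norm.} The remaining non-strict inequalities $d_H\leq d_s^{ex}$, $d_H^{top}\leq d_s^{top}$, and $d_H^{1/n}\leq d_s^{1/n}$ all follow from a single ambient-surgery construction, essentially that of Taylor cited in Definition~\ref{def:hnormonC}. If $K-J$ bounds a genus-$g$ surface $F$ in $V$ (of the appropriate category) with $\partial V=S^3$, pick a symplectic basis of simple closed curves on $F$. Since $V$ is simply connected, each basis curve is null-homotopic, so attaching $g$ copies of $S^2\times D^2$ to $V$ and tubing $F$ to the newly added $2$-spheres produces (in the same category) a $4$-manifold $V'$ in which $K-J$ bounds a null-homologous embedded disk $D$, with $\tfrac{1}{2}(\beta_2(V')+|\sigma(V')|)\leq g$. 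Setting $W=V'\smallsetminus\nu(D)$ yields a $4$-manifold satisfying the second part of Definition~\ref{def:hnormonC} (or its analogue) with $\tfrac{1}{2}(\beta_2(W)+|\sigma(W)|)\leq g$, establishing the desired bound.

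\textbf{Strict inequalities.} Three strict claims remain. For $d_H<d_s^{ex}$, the family $\{K_i=P^i(T)\}$ constructed in the proof of Proposition~\ref{prop:metricsaredifferent} already suffices, since $\|K_i\|_s^{ex}=i+1$ while $\|K_i\|_H=\|T\|_H$. For $d_s^{top}<d_s^{ex}$, any topologically slice knot $K$ with $\tau(K)\neq 0$ (e.g.\ suitable Whitehead doubles, as exhibited by Hedden and others) satisfies $d_s^{top}(K,U)=0<d_s^{ex}(K,U)$, using that $\tau$ is a smooth invariant that lower-bounds $g_{ex}$. For $d_H^{1/n}<d_s^{1/n}$, mimic the Ray construction: a strong winding-number-one pattern $P$ with $P(U)$ unknotted preserves integral (hence $\Z[1/n]$) homology cobordism of zero-surgeries, so Proposition~\ref{prop:hnormoneovernequal} yields $\|P^i(T)\|_H^{1/n}=\|T\|_H^{1/n}$, while a $\Z[1/n]$-concordance obstruction (for example, Casson-Gordon invariants, or Levine-Tristram signatures at $p$-power roots of unity for primes $p\nmid n$) can be chosen to grow unboundedly on the iterates $P^i(T)$.

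The main obstacle is the last strict inequality: although the structural part of the argument is a direct adaptation of Proposition~\ref{prop:metricsaredifferent}, one must select both a pattern $P$ and an invariant whose composition genuinely survives $\Z[1/n]$-homology cobordism and grows under iteration by $P$. The remaining inequalities reduce to the two techniques above and are routine.
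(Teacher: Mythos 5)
Most of your argument coincides with the paper's proof: the non-strict inequalities are exactly the ``containment of categories'' observations, and your bound of the homology norms by the slice-genus norms is the same ambient-surgery idea (due essentially to Taylor) that the paper uses, as are your first two strict inequalities (the family $P^i(T)$ from Proposition~\ref{prop:metricsaredifferent}, and a topologically slice knot with $\tau\neq 0$). One imprecision: ``attaching $g$ copies of $S^2\times D^2$ and tubing $F$ to the new spheres'' is not an operation that reduces genus --- tubing a surface into a sphere preserves its genus. What one actually does (as in the paper) is surger the ambient manifold along $g$ disjoint curves forming \emph{half} of a symplectic basis of $F$, using the framing whose first vector is tangent to $F$, and then compress $F$ along the cocore disks; since the curves are null-homotopic and circles are unknotted in $4$-manifolds, the surgered manifold is a connected sum with copies of $S^2\times S^2$ or $S^2\widetilde{\times}S^2$, giving $\beta_2=2g$ and $\sigma=0$.

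The genuine gap is the one you flag yourself: the strict inequality $d_H^{1/n}<d_s^{1/n}$. Your plan --- iterate a strong winding number one pattern $P$ with $P(U)$ unknotted and detect unbounded $\Z[1/n]$-slice genus on $P^i(T)$ by Levine--Tristram signatures at prime-power roots of unity or Casson--Gordon invariants --- cannot be completed as stated. By Litherland's satellite formula, for a winding number one pattern with unknotted $\widetilde{P}$ one has $\sigma_\omega(P^i(T))=\sigma_\omega(T)$ for every $\omega$, so the signatures are literally constant on your family. More structurally, the very fact you invoke to bound the homology norm --- that the zero surgeries $M_{P^i(T)}$ are all $\Z[1/n]$-homology cobordant to $M_T$ (with the meridian condition) --- forces \emph{any} invariant that depends only on that homology cobordism class to be constant on the family, and it is far from established in the literature that Casson--Gordon invariants grow here. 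The growth of the slice genus of $P^i(T)$ in \cite{Ray4} is detected by $\tau$, which is not known to bound $g_{1/n}$ (indeed whether $\C=\C^{\frac{1}{n}}$ for odd $n$ is open, as noted in the introduction). The paper sidesteps all of this with a different example: it takes $T$ and its $(n,1)$-cable and cites \cite[Thm.~5.1]{CFHH}, which provides both the $\Z[1/n]$-homology cobordism of zero surgeries (so $\|T_{(n,1)}\|_H^{1/n}=\|T\|_H^{1/n}$ via Proposition~\ref{prop:hnormoneovernequal}) and a lower bound for $d_s^{1/n}$ that does not factor through the homology cobordism class of the zero surgery. Without an input of this kind, your proof of the third strict inequality is incomplete.
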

\begin{proof} The first and third inequalities in the first row are obvious. For the second inequality in the first row, it suffices to show that $\|K\|_H\leq \|K\|^{ex}_s$ for all $K$. Suppose that $\|K\|^{ex}_s=g$, so that $K$ is the boundary of a smoothly embedded compact oriented surface $\Sigma$ of genus $g$ in a smooth manifold $\mathcal{B}$ that is homeomorphic to $B^4$. Choose  disjoint simple closed curves, $\{\gamma_1,...,\gamma_g\}$, on $\Sigma$ representing half of a symplectic basis for $H_1$. There exist framings of the normal bundles of these circles whose first vector field is tangent to $\Sigma$. Performing surgery on these circles using these framings transforms $\mathcal{B}$ to $V$, in which $\Sigma$ can be ambiently surgered to a disk. Hence $K$ is smoothly slice in $V$. Since each $\gamma_i$ is null-homotopic, the collection $\{\gamma_1,...,\gamma_g\}$ bounds a disjoint collection of smoothly embedded disks in $\mathcal{B}$.  Hence surgery on these circles alters the manifold by a connected-sum with either $S^2\times S^2$ or $S^2\widetilde{\times}S^2$. In either case $V$ is simply-connected, has $\beta_2(V)=2g$ and $\sigma(V)=0$. Thus $\|K\|_H\leq \|K\|^{ex}_s$. The second inequality in the second row is obvious. For the first inequality in the second row, repeat the argument of the previous paragraph. The second  inequality in the third row is obvious. For the first inequality in the third row, repeat the argument of the previous paragraph. The inequality in the fourth row is obvious.

To see that $d_{H}$ can be strictly less than  $d_s^{ex}$, let $T$ be the right-handed trefoil and let $P$ be the mirror image of the satellite operator in Figure~\ref{fig:satelliteexample}. It was shown in  ~\cite[Corollary 4.4]{CDR} that the zero framed surgeries on $T$ and $P(T)$ are homology cobordant via a cobordism whose fundamental group is normally generated by either meridian. hence by Proposition~\ref{prop:hnormequal}, $\|P(T)\|_H=\|T\|_H=1$. Thus $d_H(P(T),U)=1$. But in ~\cite[Section 3]{CFHH} it is shown that $\tau(P(T))>\tau(T)=1$. This implies that $d_s^{ex}(P(T),U)\geq 2$. That $d_s^{top}$ can be strictly less than $d_s^{ex}$ is well-known (the Whitehead double of the trefoil knot is topologically slice but not exotic slice). To see that $d_H^{1/n}< d_s^{1/n}$,  consider $T$ and its $(n,1)$-cable as explained in ~\cite[Thm. 5.1]{CFHH}. 
\end{proof}
Additionally, it is known that for $n$ even, $d_s^{1/n}< d_s^{ex}$, since the figure eight knot is slice in a $\Z[1/2]$-homology ball but is not even a topologically slice knot. This same example shows that, for $n$ even, $d_H^{1/n}< d_H$.

The others could be equalities! In particular the question of whether or not $d_s^{1/n}=d_{s}$ for $n$ an odd prime is fascinating. It ought to be true that $d_H^{top}<d_s^{top}$, but ~\cite[Section 3]{CFHH} was unable to show that $T$ and $P(T)$ are not topologically concordant. As mentioned above, if the $4$-dimensional smooth Poincare conjecture is true then $d_s=d_s^{ex}$.

\end{subsection}

\begin{subsection}{Other metrics}

There are many other norms on $\C$ that we will not consider. For example there is the minimal number of crossing changes necessary to change a knot to a slice knot (sometimes called the \textit{slicing number} ~\cite{Li8}). There is the smallest $3$-genus among all knots in the concordance class of $K$ (called the \textit{concordance genus} of $K$) ~\cite{Li2}. The \textit{stable 4-genus} is an interesting pseudo-norm ~\cite{Li11}.

\end{subsection}

\section{Existence of quasi-flats}\label{sec:quasi-flats}

If $(X,d)$ is a metric space then a \textbf{quasi-n-flat} in $X$ is  a subspace of $X$ that is quasi-isometric to $\mathbb{R}^n$, using the Euclidean metric on $\mathbb{R}^n$. We will show that $(\C, d_s)$ has quasi-n-flats for each $n$.

\begin{thm}\label{thm:quasiflats} For each $n\geq 1$ there are subspaces of $(\C, d_s)$ that are quasi-isometric to $\mathbb{R}^n$. The same holds for $(\C, d_H)$, $(\C^{ex}, d_s^{ex}), (\C^{ex}, d_H), (\C^{top}, d_s^{top}), $ and $(\C^{top}, d_H^{top})$.
\end{thm}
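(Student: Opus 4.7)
The plan is to produce, for each of the six metric spaces $(\C^*, d_*)$, a quasi-isometric embedding $\Phi\colon \Z^n\hookrightarrow \C^*$ using the \emph{same} pair $(\Phi, \Z^n)$. Since $\Z^n$ with its $\ell^1$ metric is quasi-isometric to Euclidean $\R^n$, the image $\Phi(\Z^n)$ will then be the desired quasi-$n$-flat.

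I would begin by choosing knots $K_1,\dots,K_n$ and complex numbers $\omega_1,\dots,\omega_n\in S^1$ so that the matrix of Levine--Tristram signatures $M=(\sigma_{\omega_i}(K_j))_{i,j}$ is invertible over $\Q$; concretely, one may take $K_j=T_{2,2j+1}$ and choose $\omega_i$ just past the largest jump of $\sigma_\omega(K_i)$, which makes $M$ lower triangular with nonzero diagonal entries. Define $\Phi\colon\Z^n\to\C$ by $\Phi(a_1,\dots,a_n)=\sum_{j=1}^n a_jK_j$, where multiplication by a negative integer is interpreted via $-K=r\ov{K}$ and $+$ denotes connected sum. Since each $\sigma_{\omega_i}\colon \C\to\Z$ is a group homomorphism, $\sigma_{\omega_i}(\Phi(\mathbf{a}))=(M\mathbf{a})_i$. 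The upper Lipschitz bound $\|\Phi(\mathbf{a})-\Phi(\mathbf{b})\|_*\leq C_1\|\mathbf{a}-\mathbf{b}\|_{\ell^1}$ with $C_1=\max_j\|K_j\|_*$ is immediate from sub-additivity (GN2).

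For the lower bound, the key input is a uniform inequality $|\sigma_\omega(K)|\leq C\|K\|_*$ valid for each of our six norms (taking $\omega$ generic, i.e.\ not a root of the Alexander polynomial of $K$). Granting this,
\[
\|\mathbf{a}\|_{\ell^1}\leq \|M^{-1}\|_{\ell^\infty\to\ell^1}\cdot\max_i|\sigma_{\omega_i}(\Phi(\mathbf{a}))|\leq C_2\|\Phi(\mathbf{a})\|_*,
\]
which together with the right-invariance of $d_*$ yields the bi-Lipschitz estimate on $\Phi$ and so exhibits $\Phi(\Z^n)$ as a quasi-$n$-flat.

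The main obstacle is establishing the uniform signature bound, particularly for the three homology norms. For $d_s,d_s^{ex},d_s^{top}$ it is the classical Tristram inequality in the smooth, exotic, and topological categories, which holds because Levine--Tristram signatures are topological concordance invariants and bound the genus of any locally flat surface bounded by $K$ in any ambient $4$-manifold of the kind allowed in Definitions~\ref{def:slicegenusnorm}, \ref{def:exslicegenusnorm}, and \ref{def:topslicegenusnorm}. For $d_H,d_H^{ex},d_H^{top}$ one needs $|\sigma_\omega(K)|\leq \beta_2(W)+|\sigma(W)|=2\|K\|_H^*$ for any $W$ of the kind appearing in Definition~\ref{def:hnormonC}(2); this follows by analyzing the $\omega$-twisted intersection form of $W$ as in the work of Gilmer and Casson--Gordon, the twisted signature differing from $\sigma_\omega(K)$ by an error controlled by $\beta_2(W)+|\sigma(W)|$. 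Once these bounds are in place, the remainder of the argument is routine linear algebra and applies uniformly to all six spaces.
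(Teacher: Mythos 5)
Your proposal is correct and follows essentially the same route as the paper: embed $(\Z^n,\ell^1)$ into $\C^*$ by connected sums of knots detected by Levine--Tristram signature homomorphisms, get the upper Lipschitz bound from sub-additivity (GN2) and the lower bound from the fact that $|\sigma_\omega|$ is bounded above by a constant times each of the six norms (the paper cites Taylor for the homology-norm cases and arranges the signature matrix to be diagonal, using genus-one twist knots with $\sigma_j(K_i)=2\delta_{ij}$, rather than triangular). One small correction to your concrete choice: with $K_j=T_{2,2j+1}$ you should take $\omega_i$ just past the \emph{first} jump of $\sigma_\omega(K_i)$ rather than the largest one, since an $\omega_i$ past the largest jump of $K_i$ has already passed jumps of the higher torus knots, so $\sigma_{\omega_i}(K_j)\neq 0$ for $j>i$ and the matrix $M$ is not triangular as claimed; any choice making $M$ invertible (or the paper's diagonal choice) repairs this without affecting the rest of the argument.
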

\begin{proof} First consider the case of $(\C, d_s)$.  It is well-known (and easy to see) that the inclusion of the integer lattice $\Z^n\hookrightarrow \R^n$ with the taxicab (or $\ell_1$) metric, $d_t$, on $\Z^n$, is a quasi-isometry. Hence it suffices to exhibit a quasi-isometric embedding
$(\Z^n, d_t)\hookrightarrow (\C,d_s)$. It is well known that $\C$ contains free abelian subgroups of arbitrarily large rank.
Specifically, suppose that $K_1,...,K_n$  are linearly-independent concordance classes, each of  slice genus one, that are detected by homomorphisms $\sigma_j:\C\to \Z$, $1\leq j\leq n$, meaning that $\sigma_j(K_i)=2\delta_{ij}$. These homomorphisms show that the free abelian group on $\{K_i\}$  is a subgroup of $\C$. Assume also that these $\sigma_j$ give lower bounds on the slice genus in the sense that
\begin{equation}\label{eq:quasi2}
g_s(K_i)\geq \frac{1}{2} |\sigma_j(K_i)|.
\end{equation}
Such classes $K_i$ can easily be found by taking a certain family of (genus one) twist-knots and considering certain Tristram signature functions as the $\sigma_j$ (see ~\cite[Theorem 2.27]{Tri}).

We will show that the embedding $(\Z^n, d_t)\hookrightarrow (\C,d_s)$ given by the $K_i$, is a quasi-isometric embedding. Suppose $\vec{x}, \vec{y} \in \Z^n$ where $\vec{x}=(x_1,...,x_n)$ and $\vec{y}=(y_1,...,y_n)$. It now suffices to show that
$$
\frac{1}{n}d_t(\vec{x},\vec{y})\leq  d_s(\vec{x}, \vec{y}) \leq n d_t(\vec{x},\vec{y}).
$$
Using the definitions of the metrics this is equivalent to:
\begin{equation}\label{eq:quasi1}
\frac{1}{n}\sum(|x_i-y_i|)\leq g_s(\sum (x_i-y_i)K_i)\leq n\sum(|x_i-y_i|).
\end{equation}
By the subadditivity and symmetry of Definition~\ref{def:groupnorm},
$$
g_s(\sum (x_i-y_i)K_i)\leq \sum(|x_i-y_i|)g_s(K_i)= \sum(|x_i-y_i|)\leq n\sum(|x_i-y_i|),
$$
which verifies the right-hand side of  inequality  ~(\ref{eq:quasi1}).
On the other hand, by ~(\ref{eq:quasi2}),  for each $j$:
$$
g_s(\sum (x_i-y_i)K_i)\geq \frac{1}{2}|\sigma_j\left(\sum (x_i-y_i)K_i\right)|=|x_j-y_j|.
$$
Hence 
$$
g_s(\sum (x_i-y_i)K_i)\geq \frac{1}{n}\sum|x_i-y_i|,
$$
which confirms the left-hand side of  inequality  ~(\ref{eq:quasi1}).

Thus we have shown that $(\C, d_s)$ admits a quasi-n-flat. The same proof works for the other cases since inequality (\ref{eq:quasi2}) is known to hold for all these other norms ~\cite[Section 1]{Taylor1979}
\end{proof}

\section{Satellite operators and other natural operators}\label{sec:satelliteoper}

In this section we review some natural operators on $\C^*$ given by taking the reverse, taking the mirror image, the connected sum with a fixed class, satellite operators, and multiplication by an integer with respect to the group structure.

Let $ST\equiv S^1\times D^2$ where both $S^1$ and $D^2$ have their usual orientations. We will always think of $ST$ as embedded in $S^3$ in the standard unknotted fashion. Suppose $P\subset ST$ is an embedded oriented circle, called a \textbf{pattern knot}, that is geometrically essential (even after isotopy $P$ has non-trivial intersection with a meridional $2$-disk). The geometric winding number of $P$, denoted $gw(P)$ is the minimum number of these intersecion points over all patterns isotopic to $P$. The \textbf{winding number} of $P$ is the algebraic number of such intersections. We say that $P$ has \textbf{strong winding number} $\pm 1$ if the meridian of the solid torus $ST$ normally generates $\pi_1(S^3-\tilde{P})$, where $\tilde{P}$ is the knot $P\subset ST\subset S^3$ ~\cite[Def. 1.1]{CDR}. Note that if $\tilde{P}$ is unknotted then winding number one is the same as strong winding number one. Suppose $K$ is an oriented knot in $S^3$ given as the image of the embedding $f_K:S^1\to S^3$. Then there is an orientation-preserving  diffeomorphism $\tilde{f}_K:S^1\times D^2\to N(K)$, where $N(K)$ is a tubular neighborhood of $K$, such that $\tilde{f}_K=f$ on $S^1\times\{0\}$ and $\tilde{f}_K$ takes the oriented meridian $\eta$ of $ST$ to the oriented meridian of $K$, and takes a preferred  longitude of $ST$, $S^1\times \{1\}$, to a preferred oriented longitude of $K$. The (oriented) knot type of the image of $P$ under $\tilde{f}_K: ST \to N(K)\hookrightarrow S^3$ is called the (untwisted) \textit{satellite} of $K$ with \textit{pattern knot} $P$  ~\cite[p. 10]{Lick2}. In this paper this will be denoted $P(K)$. 
\begin{figure}[htbp]
\setlength{\unitlength}{1pt}
\begin{picture}(252,200)
\put(50,0){\includegraphics{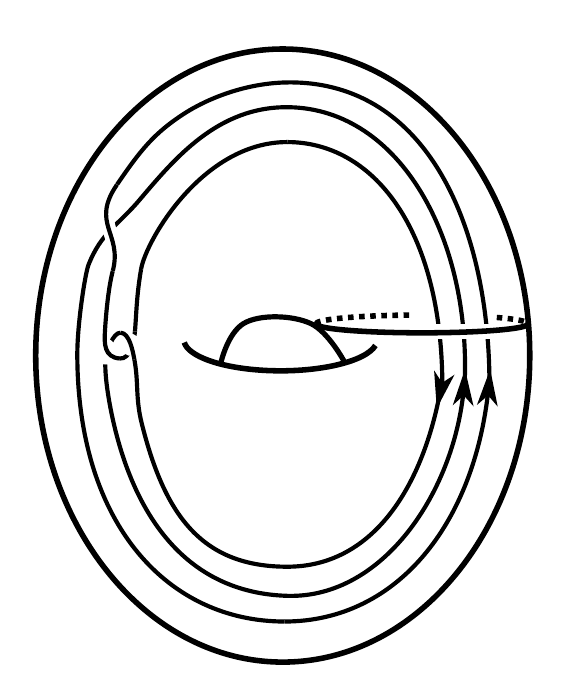}}
\put(214,108){$\eta$}
\put(120,45){$P$}
\end{picture}
\caption{A strong winding number one pattern $P$}
\label{fig:satelliteexample}
\end{figure}
Note that $\tilde{P}=P(U)$. In this paper $P$ will denote, depending on the context, either a knot in the solid torus or the corresponding induced function on a set of equivalence classes of knots in $S^3$:
$$
P:\K/\sim\to \K/\sim
$$
given by $K\mapsto P(K)$.  Such functions seem rarely to be additive with respect to the monoidal structure on $\K$ given by connected sum.  It is well known that satellite functions descend to  yield what we call \textbf{satellite operators}, on $\mathcal{K}/\sim$ for various important equivalence relations on knots, In particular any such operator descends to  $P:\mathcal{C}^*\to \mathcal{C}^*$ on the various sets of ``concordance classes of knots'' as defined in Section~\ref{sec:intro}. For a fixed pattern knot $P$, we will use the same notation for each of these satellite operators. 

Some examples are of particular importance.
\begin{defn}\label{def:consumoperator} If $J$ is a knot then \textbf{the connected-sum operator} (corresponding to $J$), denoted $C_J:\C^*\to \C^*$, is the function given by $C_J(K)=J\#K$. 
\end{defn}

Note that if $P$ is a pattern whose geometric winding number is $+1$, then  the operator $P:\C^*\to \C^*$ is $C_{\widetilde{P}}$. In particular a connected-sum operator is a satellite-operator.

\begin{defn}\label{def:revoperators} The \textbf{ reverse operator}, denoted $r:\C^*\to \C^*$, is the function that sends each knot $J$ to the class represented by, $rJ$, the reverse of the knot. 
\end{defn}

The reverse operator is a satellite operator whose pattern knot is the core of ST, oriented so that its winding number is $-1$.

\begin{defn}\label{def:ncableoperator} If $n$ is an integer  then the \textbf{(n,1)-cable operator}, denoted $C_{n,1}:\C^*\to \C^*$, is the satellite operator given by the pattern which is the $(n,1)$-torus knot. When $n=0$ it is understood that $C_{n,1}:\C^*\to \C^*$ is the \textbf{zero operator} $Z:\C^*\to \C^*$ that sends every class to the class of the unknot.
\end{defn}

There are other natural operators on $\C^*$ that are not necessarily satellite operators, but which we will consider.

\begin{defn}\label{def:mirroroperators} The \textbf{mirror image operator} is the function $J\longmapsto \overline{J}$, where   $\overline{J}$  is the mirror image. 
\end{defn}

\begin{defn}\label{def:timesmoperators} The \textbf{times m operator} is the function $J\longmapsto mJ$, where   $mJ$  denotes the connected sum of $|m|$ copies of $J$ if $m\geq 0$ or  $|m|$ copies of $-J$ if $m<0$.
\end{defn}

\section{Metric aspects of  operators}\label{sec:satellites}

The simplest operators are bijective isometries. For example:

\begin{prop}\label{prop:consumisisom} Any connected sum  operator $P$ induces a bijective isometry $P:\C^*\to \C^*$ with respect to any metric induced from a group norm. This is a quasi-isometry with respect to $A=1$, $B=0$ and $C=0$. The same holds for the ``reverse'' operator $J\mapsto rJ$ and the mirror image operator, $J\mapsto \overline{J}$, as long as $\| rJ \| =\| J \|$ (which holds for all the norms we are discussing).
\end{prop}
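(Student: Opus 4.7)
The plan is to reduce every claim to the fact that each listed operator descends to a group homomorphism on $\C^*$ (which is abelian), together with the norm axioms. This lets all the metric statements fall out of right-invariance.

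First I would handle the connected-sum operator $P = C_J$. Bijectivity is trivial since $C_{-J}$ is a two-sided inverse. For the isometry property, I would simply compute
\[
d(C_J(K_1), C_J(K_2)) = \|(J\#K_1) - (J\#K_2)\| = \|K_1 - K_2\| = d(K_1, K_2),
\]
where the middle equality uses commutativity of $\C^*$. Equivalently, this is right-invariance, which was recorded in the Proposition immediately preceding this one.

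For the reverse operator $r$ and the mirror operator $\overline{\,\cdot\,}$, the key observation is geometric: both commute with connected sum, so each descends to a group homomorphism $\C^* \to \C^*$. In particular, for $f \in \{r, \overline{\,\cdot\,}\}$ one has $f(K_1 - K_2) = f(K_1) - f(K_2)$, and hence
\[
d(f(K_1), f(K_2)) = \|f(K_1-K_2)\|.
\]
For $f = r$, the stated hypothesis $\|rJ\| = \|J\|$ finishes the job. For $f = \overline{\,\cdot\,}$, I would deduce the analogous identity $\|\bar J\| = \|J\|$ from the hypothesis: since $-J = r\bar J$ in $\C^*$, applying $r$ (which is an involution) gives $\bar J = -rJ$, and therefore $\|\bar J\| = \|{-rJ}\| = \|rJ\| = \|J\|$ using (GN3) and the hypothesis. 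Bijectivity of $r$ and $\overline{\,\cdot\,}$ is immediate since both are self-inverse.

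Finally, the quasi-isometry assertion with $A=1$, $B=0$, $C=0$ is just the observation that a surjective isometry automatically satisfies the defining inequalities \eqref{eq:defquasiisomfirst} and \eqref{eq:defquasiisomsecond} with those constants. There is really no obstacle; the only thing worth being slightly careful about is the chain $\|\bar J\| = \|{-rJ}\| = \|rJ\| = \|J\|$ showing that the single hypothesis $\|rJ\| = \|J\|$ suffices for both the reverse and mirror cases.
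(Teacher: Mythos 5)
Your proposal is correct and follows essentially the same route as the paper: bijectivity from the obvious inverse, the isometry property from the group-norm computation (right-invariance), and the reverse/mirror cases reduced to norm preservation via the identity $-J=r\overline{J}$ together with (GN3) and the hypothesis $\|rJ\|=\|J\|$. The only cosmetic difference is that you phrase the reverse/mirror step through "descends to a group homomorphism" and apply (GN3) before the hypothesis, whereas the paper writes the same chain as $\|\overline{J}\|=\|r(-J)\|=\|-J\|=\|J\|$; the content is identical.
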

\begin{proof} Clearly any connected sum operator, $P$,  has an inverse which is also a connected sum operator. Thus $P$ is surjective. Then note
$$
d(P(K),P(J))=d(K\#\widetilde{P}, J\#\widetilde{P})=\| K\#\widetilde{P}\#-\widetilde{P}\#-J\|=\|K\#-J\|=d(K,J).
$$
Thus $P$ is a (surjective) isometry and so $P$ is a quasi-isometry with respect to $A=1$, $B=0$ and $C=0$.

A similar argument works in the other cases, after noting that $\|\overline{J}\|=\|r(-J)\|=\|-J\|$, by assumption, and $\|-J\|=\|J\|$ by definition of a group norm.
\end{proof}

\begin{lem}\label{lemma:nearidentity} Suppose $d$ is a metric on a group $G$ and suppose that $f:G\to G$ is a quasi-isometry with respect to constants $A$, $B$ and $C$ In the notation of Definition~\ref{def:quaisisom}) . If $g:G\to G$ is within a bounded distance $D$ of $f$ then $g$ is also a quasi-isometry, with respect to constants $A'=A$, $B'=B+2D$ and $C'=C+D$.
\end{lem}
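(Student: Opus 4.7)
The proof is a direct triangle-inequality computation, so the plan is essentially to unpack the three inequalities in Definition~\ref{def:quaisisom} for $g$ and bound each one using the corresponding inequality for $f$ together with the hypothesis $d(f(x),g(x))\le D$ for all $x\in G$.

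First I would verify the upper quasi-isometry inequality. By the triangle inequality,
$$
d(g(x),g(y))\le d(g(x),f(x))+d(f(x),f(y))+d(f(y),g(y))\le D+\bigl(A\,d(x,y)+B\bigr)+D,
$$
which is exactly $A\,d(x,y)+(B+2D)$, as required with $A'=A$ and $B'=B+2D$.

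Next I would handle the lower inequality by the symmetric trick: apply the triangle inequality in the other direction to bound $d(f(x),f(y))$ in terms of $d(g(x),g(y))$:
$$
d(f(x),f(y))\le d(f(x),g(x))+d(g(x),g(y))+d(g(y),f(y))\le 2D+d(g(x),g(y)).
$$
Combining with the lower quasi-isometry inequality for $f$ gives
$$
\tfrac{1}{A}d(x,y)-B\le d(f(x),f(y))\le 2D+d(g(x),g(y)),
$$
so $d(g(x),g(y))\ge \tfrac{1}{A}d(x,y)-(B+2D)$, as needed.

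Finally, for quasi-surjectivity, given any $z\in G$ choose $x\in G$ with $d(z,f(x))\le C$ (using that $f$ is quasi-surjective). Then one more application of the triangle inequality gives
$$
d(z,g(x))\le d(z,f(x))+d(f(x),g(x))\le C+D,
$$
so $g$ is quasi-surjective with constant $C'=C+D$. There is no real obstacle here; the only thing to be careful of is the factor of $2D$ (rather than $D$) in the additive constant, which arises because the triangle inequality must be applied at both endpoints when comparing $g$ to $f$.
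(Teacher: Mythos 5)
Your proposal is correct and follows essentially the same argument as the paper: the triangle inequality applied at both endpoints in each direction to get $A'=A$, $B'=B+2D$, and one more triangle inequality for quasi-surjectivity with $C'=C+D$. If anything, your quasi-surjectivity step (starting from an arbitrary $z$ and choosing $x$ with $d(z,f(x))\le C$) is stated slightly more carefully than the paper's, which is written as a bound on $d(g(x),x)$.
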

\begin{proof} For any $x,y\in G$, by the triangle inequality we have
$$
d(g(x),g(y))\leq d(g(x),f(x))+d(f(x),f(y))+d(f(y),g(y)).
$$
Since $f$ is a a quasi-isometry with respect to constants $A$ and $B$, and $g$ is within a distance $D$ of $f$ we have:
$$
d(g(x),g(y))\leq A~d(x,y)+(B+2D).
$$
Similarly,
$$
d(f(x),f(y))\leq d(f(x),g(x))+d(g(x),g(y))+d(g(y),f(y))\leq 2D+ d(g(x),g(y)),
$$
so
$$
d(g(x),g(y))\geq d(f(x),f(y))-2C\geq \frac{1}{A}d(x,y)-(B+2D).
$$

As regards quasi-surjectivity we have:
$$
d(g(x),x)\leq d(g(x),f(x))+d(f(x),x)\leq D+C.
$$
\end{proof}

The following key result says that any satellite operator is within a bounded distance of a simpler operator.  

\begin{prop}\label{prop:boundeddistance} Any winding number $n$ operator $P:(\C^*,d_*)\to (\C^*,d_*)$ is within a bounded distance of the $(n,1)$-cable operator, with respect to any of the metrics we have introduced.
\end{prop}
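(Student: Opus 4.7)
The plan is to produce, for any winding number $n$ pattern $P$, a single constant $g = g(P)$ such that $d_*(P(K), C_{n,1}(K)) \leq g$ for every knot $K$ and every one of the seven metrics $d_*$ defined in Section~\ref{sec:metricsonconcordance}. The key geometric observation is that $P$ and the $(n,1)$-torus knot pattern $T_{n,1}$ both represent $n\cdot[S^1\times\{0\}]$ in $H_1(ST)\cong\Z$, so the oriented link $P\sqcup -T_{n,1}$ is null-homologous in the solid torus.

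First I would construct an embedded oriented surface $F\subset ST\times I$ with $\partial F = (P\times\{0\})\sqcup(-T_{n,1}\times\{1\})$. Since $ST$ is an orientable $3$-manifold and $P\sqcup -T_{n,1}$ is null-homologous in it, the classical Seifert surface construction produces an embedded oriented surface $\Sigma\subset ST$ with $\partial\Sigma = P\cup -T_{n,1}$. I place $\Sigma$ in the slice $ST\times\{1/2\}$ and attach the two vertical product annuli $P\times[0,1/2]$ and $T_{n,1}\times[1/2,1]$ (smoothing corners) to move the two boundary components to opposite ends of $ST\times I$. The resulting surface $F$ is embedded and oriented, and its genus $g$ depends only on $P$ and $n$, not on $K$. (When $n=0$, where the convention $C_{0,1}(K)=U$ corresponds to taking $T_{0,1}$ to be the meridian $\eta$ of $ST$, the same construction goes through, so the $n=0$ case is handled in a uniform way.)

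Next, for any knot $K$, the parametrization $\tilde f_K:ST\to N(K)\subset S^3$ extends to an orientation-preserving embedding $\tilde f_K\times\id_I : ST\times I\to N(K)\times I\subset S^3\times I$, and this carries $F$ to a properly embedded compact oriented surface $F_K\subset S^3\times I$ of the \emph{same} genus $g$, with $\partial F_K = P(K)\sqcup -C_{n,1}(K)$. By Proposition~\ref{prop:altdefslicedist} this gives $d_s(P(K),C_{n,1}(K))\leq g$. The identical surface $F_K$, now reinterpreted with $S^3\times I$ viewed as a smooth manifold homeomorphic to itself (for $d_s^{ex}$), as a topological manifold homeomorphic to $S^3\times I$ (for $d_s^{top}$), and as a smooth $\Z[1/n]$-homology cobordism from $S^3$ to itself (for $d_s^{1/n}$), yields the analogous bounds via Proposition~\ref{prop:altdefexdist} and its evident analogues.

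Finally, for the four homology pseudo-norms I invoke the comparison chains of Proposition~\ref{prop:metricsarerelated}, which give $d_H^{(*)}\leq d_s^{(*)}\leq d_s$ in every case, so $d_H^{(*)}(P(K),C_{n,1}(K))\leq g$ follows immediately. The main (and really the only) obstacle is the existence step, namely producing the embedded oriented $\Sigma$ in $ST$; but this is a classical application of the Seifert-surface construction to a null-homologous link in an orientable $3$-manifold, so nothing deep is required. Everything after that is a routine push-forward via $\tilde f_K$ together with an appeal to the already-established relations among the metrics.
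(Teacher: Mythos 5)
Your proposal is correct and follows essentially the same route as the paper: both arguments produce a compact oriented surface of genus depending only on $P$ cobounding the two patterns in (a thickening of) $ST$, push it forward by $\tilde f_K\times \id$ into $S^3\times[0,1]$, bound $d_s$ via Proposition~\ref{prop:altdefslicedist}, and then deduce the bounds for all the other metrics from Proposition~\ref{prop:metricsarerelated}. Your explicit Seifert-surface-plus-vertical-annuli construction is just a concrete realization of the paper's step ``the two homologous circles cobound a surface in $ST\times[0,1]$,'' so it is not a genuinely different argument.
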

\begin{proof}

The operator $P$ corresponds to a pattern knot $P\hookrightarrow ST$. The operator $ C_{n,1}$ corresponds to a different pattern knot $P'\hookrightarrow ST$, both of which have winding number $n$. Consider $P\hookrightarrow ST\times \{0\}$ and $P'\hookrightarrow ST\times \{1\}$. Since these two oriented circles in are homologous in $ST\times [0,1]$, it is easily seen that they cobound a compact oriented surface $\Sigma$ in $ST\times [0,1]$. Let $D$ be the genus of $\Sigma$. Note that $D$ depends only on $P$. Then, for any knot $J$ whose tubular neighborhood is given  by an embedding $f_J:ST\hookrightarrow S^3$, consider the image of $\Sigma$ under the map $f_J\times id: ST\times [0,1]\hookrightarrow S^3\times [0,1]$. This surface forms a smooth cobordism of genus $D$ from $P(J)$ to $P'(J)$. By Proposition~\ref{prop:altdefslicedist}, $d_s(P(J),P'(J))\leq D$.  Thus, by Proposition~\ref{prop:metricsarerelated}, for each of the metrics we have defined, $d_*(P(J),P'(J))\leq D$ for each $J$.

With more attention, one can get an explicit bound for $D$ involving only $n$ and the geometric winding number of $P$, namely $D\leq \|\widetilde{P}\|_*+1/2(gw(P)-|n|)+(|n|-1)$.
\end{proof} 

\begin{cor}\label{cor:boundeddistance} Any winding number $1$ satellite operator $P$ is within a bounded $d_*$-distance of the identity map.  Any winding number $-1$ satellite operator $P$ is within a bounded $d_*$-distance of the reverse operator. 
\end{cor}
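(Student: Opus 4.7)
The plan is to combine Proposition~\ref{prop:boundeddistance} with an explicit identification of the $(\pm 1,1)$-cable operators. By Proposition~\ref{prop:boundeddistance}, any winding number $n$ satellite operator $P$ lies within a bounded $d_*$-distance of $C_{n,1}$, for each of the metrics under consideration. Specializing to $n=\pm 1$, the corollary reduces to the following two identities of functions on $\C^*$:
\[
C_{1,1}=\id\qquad\text{and}\qquad C_{-1,1}=r.
\]

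For the first identity, observe that the $(1,1)$-torus knot, considered as a pattern in $ST=S^1\times D^2$, lies on the boundary torus $\partial ST$ and is isotopic there to a longitudinal curve $S^1\times\{p\}$ with $p\in\partial D^2$. Under the diffeomorphism $\widetilde{f}_K:ST\to N(K)$ used in the definition of the satellite construction (which carries the preferred longitude of $ST$ to a preferred longitude of $K$), this pattern is sent to a $0$-framed longitude of $K$, which is isotopic in $S^3$ to $K$ itself. Hence $C_{1,1}(K)=K$ for every knot $K$. For the second identity, the $(-1,1)$-torus knot is obtained from the $(1,1)$-pattern by reversing its orientation, so its image under $\widetilde{f}_K$ is a longitude of $K$ with the reversed orientation, which represents $rK$. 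Thus $C_{-1,1}(K)=rK$, i.e.\ $C_{-1,1}=r$.

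Combining these identifications with Proposition~\ref{prop:boundeddistance}, any winding number $+1$ satellite operator is within bounded $d_*$-distance of $\id$, and any winding number $-1$ satellite operator is within bounded $d_*$-distance of $r$. There is no real obstacle here; the content lies entirely in Proposition~\ref{prop:boundeddistance}, and the only thing to verify carefully is the geometric identification of $C_{\pm 1,1}$, which is immediate from the description of the $(\pm 1,1)$-torus knot as a longitudinal curve on $\partial ST$ (oriented consistently with, or opposite to, the $S^1$ factor).
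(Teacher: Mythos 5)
Your argument is correct and is essentially the paper's own (implicit) route: the corollary follows immediately from Proposition~\ref{prop:boundeddistance} once one identifies $C_{1,1}$ with the identity and $C_{-1,1}$ with the reverse operator, the latter identification being exactly the remark following Definition~\ref{def:revoperators} that the reverse operator is the satellite operator whose pattern is the core of $ST$ with winding number $-1$. One small imprecision: the $(\pm1,1)$-pattern is isotopic to the (suitably oriented) core \emph{through the solid torus} rather than on $\partial ST$, and its image is a $\pm1$-framed rather than $0$-framed longitude, but since only the oriented knot type of the image matters this does not affect the conclusion $C_{1,1}=\id$, $C_{-1,1}=r$.
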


\begin{subsection}{Winding number one satellite operators}

By Corollary~\ref{cor:boundeddistance},  a general winding number $+ 1$ operator behaves roughly like a connected sum operator. Consequently,

\begin{thm}\label{thm:quasi} If $P$ is a winding number $\pm 1$ pattern  then $P:(\C^*,d_*)\to (\C^*,d_*)$ is a quasi-isometry. 
\end{thm}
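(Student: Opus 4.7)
The plan is to realize this theorem as a direct synthesis of the three preceding results, namely Proposition~\ref{prop:consumisisom}, Corollary~\ref{cor:boundeddistance}, and Lemma~\ref{lemma:nearidentity}. The structural point is that Corollary~\ref{cor:boundeddistance} already gives the heart of the argument: it reduces the study of an arbitrary winding number $\pm 1$ satellite operator, modulo bounded error, to one of two very rigid ``model'' operators for which the quasi-isometry property is immediate.

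First I would handle the winding number $+1$ case. By Corollary~\ref{cor:boundeddistance}, the operator $P$ is within some bounded distance $D=D(P)$ of the identity map $\id:\C^*\to \C^*$ with respect to $d_*$. The identity is the connected-sum operator $C_U$ corresponding to the unknot, and thus by Proposition~\ref{prop:consumisisom} it is a bijective isometry, i.e.\ a quasi-isometry with constants $A=1$, $B=0$, $C=0$. Applying Lemma~\ref{lemma:nearidentity} with $f=\id$ and $g=P$ then shows that $P$ is a quasi-isometry with constants $A'=1$, $B'=2D$, $C'=D$.

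For the winding number $-1$ case, the same outline applies but with $f$ taken to be the reverse operator $r:J\mapsto rJ$. By Corollary~\ref{cor:boundeddistance}, $P$ is within bounded distance of $r$, and by Proposition~\ref{prop:consumisisom} the reverse operator is itself a bijective isometry for each of the norms in play (since $\|rJ\|_*=\|J\|_*$ in every category and under either the slice-genus or homology norm). Invoking Lemma~\ref{lemma:nearidentity} once more concludes that $P$ is a quasi-isometry in this case as well.

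I do not anticipate a serious obstacle here: every ingredient has already been established, and the only thing to verify is that the hypotheses of Lemma~\ref{lemma:nearidentity} really apply uniformly across all the metric spaces $(\C^*,d_*)$ we consider. The only mild subtlety is making sure that the identity and reverse operators are genuine isometries for \emph{each} $d_*$, but this is explicitly covered by Proposition~\ref{prop:consumisisom} together with the fact, noted there, that $\|rJ\|_*=\|J\|_*$ for all the norms we defined in Section~\ref{sec:metricsonconcordance}.
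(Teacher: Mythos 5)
Your proposal is correct and follows essentially the same route as the paper: the authors likewise combine Corollary~\ref{cor:boundeddistance}, Proposition~\ref{prop:consumisisom}, and Lemma~\ref{lemma:nearidentity} to conclude that $P$ is a quasi-isometry (the paper only adds an optional refinement of the constants, getting $B=\tfrac{1}{2}(gw(P)-1)$ by rewriting $-P(K)\#P(J)$ as $R(-K\#J)$ for a pattern $R$ with $\widetilde{R}$ slice). No gaps.
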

\begin{proof}  By Corollary~\ref{cor:boundeddistance}, such a $P$ is within a bounded $d_*$-distance of either the identity or the reverse operator. By Proposition~\ref{prop:consumisisom} both of the latter are bijective isometries, hence quasi-isometries with $A=1$, $B=0$ and $C=0$. Hence by Lemma~\ref{lemma:nearidentity}  $P$ is a quasi-isometry with respect to $d_*$. 
In fact, using the last line of the proof of Proposition~\ref{prop:boundeddistance}, and applying Lemma~\ref{lemma:nearidentity} where $f$ is an isometry, we see that $P$ is a quasi-isometry with respect to the constants $A=1$, $B=(gw(P)-1)$ and $C=1/2(gw(P)-1)$.  

Using a slightly different approach, one can do slightly better and get $B=1/2(gw(P)-1)$. Namely, as we will see in Figures~\ref{fig:proof2} and ~\ref{fig:proof3}  in  the proof of Theorem~\ref{thm:mainisom}, we have that 
$$
-P(K)\#P(J)=-P(K)\#P(K\#-K\#J)=R(-K\#J)
$$
where $R$ is a pattern with the same winding number, with gw($R$)$\leq$gw($P$),  and for which $\widetilde{R}$ is a slice knot. Then by Corollary~\ref{cor:boundeddistance}, $R$ is within a distance $1/2(gw(P)-1)$ of a surjective isometry. Hence
\begin{equation*}
\begin{array}{rl}
d_*(P(K),P(J))\equiv d_*(-P(K)\#P(J),U) = & d_*(R(-K\#J),U)\\
&\leq \|-K\#J\|_* + 1/2(gw(P)-1)\\
&=d_*(K,J)+1/2(gw(P)-1).
\end{array}
\end{equation*}

\end{proof}

Strong winding number one operators have an  even better behavior with respect to the homology norm.

\begin{thm}\label{thm:mainisom}If $P$ is a strong winding number $ \pm 1$  pattern then 
$$
P:(\C,d_H)\to (\C,d_H)
$$
 preserves the pseudo-norm $d_H$ and is quasi-surjective; so that if the $4D$-Poincar\'{e} conjecture is true then $P$ is an isometric embedding of $\C$ that is quasi-surjective. Moreover for $*=ex, top$ or $\frac{1}{n}$
$$
P:(\C^*,d_H^*)\to (\C^{*},d_H^*)
$$
is an isometric embedding  and is quasi-surjective.  
\end{thm}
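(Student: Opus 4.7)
The plan is to reduce the isometry claim to an application of the Cochran--Davis--Ray homology cobordism theorem \cite[Cor.~4.4]{CDR} via a pattern-level identity. Fix $K\in\C^*$. The key construction produces a pattern $R = R_K\subset ST$ satisfying, for every knot $L$,
\[
R(L)\;=\;-P(K)\,\#\,P(K\#L),
\]
while $R$ is still a strong winding number $\pm 1$ pattern whose associated knot $\widetilde R = R(U)$ is smoothly slice. Let $C_K$ be the strong winding number $+1$ pattern whose satellite action is $L\mapsto K\#L$ (namely the core of $ST$ with a local connected summand $K$). Then the composition $P\circ C_K$, obtained by placing $P$ inside a tubular neighborhood of $C_K\subset ST$, is a pattern of the same winding number as $P$ with $(P\circ C_K)(L)=P(K\#L)$, and composition preserves strong winding number $\pm 1$: a meridian of the outer $ST$ is conjugate, in the complement of the composite satellite, to a meridian of the inner companion, which normally generates $\pi_1$ by the strong winding number $\pm 1$ hypothesis on $P$. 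Now set $R = (P\circ C_K)\,\#\,-P(K)$, inserting $-P(K)$ as a local connected summand inside a small $3$-ball in $ST$. This remains strong winding number $\pm 1$ (via the amalgamated-product structure of the knot group of a connected sum), its satellite is $R(L) = -P(K)\,\#\,P(K\#L)$, and $\widetilde R = -P(K)\,\#\,P(K) = -J\#J$ with $J=P(K)$, which bounds the standard ribbon disk and is therefore smoothly slice.

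With $R$ in hand, \cite[Cor.~4.4]{CDR} (and its analogues in the exotic, topological, and $\Z[\tfrac{1}{n}]$-homology categories, which apply because $\widetilde R$ is smoothly slice and hence slice in each of these categories) supplies, for every knot $L$, a homology cobordism in the appropriate category from $M_L$ to $M_{R(L)}$ whose fundamental group is normally generated by either meridian. Proposition~\ref{prop:hnormequal} and its analogues~\ref{prop:hnormtopequal} and~\ref{prop:hnormoneovernequal} therefore give $\|R(L)\|_H^{*} = \|L\|_H^{*}$ for every knot $L$ and every category $*$. For arbitrary $J,K\in\C^*$, use that $K\#{-}K\#J$ is concordant to $J$ (so $P(K\#{-}K\#J)$ is concordant to $P(J)$, since satellite operators descend to $\C^*$) to compute
\begin{align*}
d_H^{*}(P(K),P(J)) &= \|{-}P(K)\,\#\,P(J)\|_H^{*} = \|{-}P(K)\,\#\,P(K\#{-}K\#J)\|_H^{*}\\
&= \|R({-}K\#J)\|_H^{*} = \|{-}K\#J\|_H^{*} = d_H^{*}(K,J).
\end{align*}
This shows that $P$ preserves the pseudo-metric $d_H$ in the smooth case and is a (genuine) isometric embedding in the cases $*=ex,\ top,\ \tfrac{1}{n}$, where $d_H^{*}$ is a true metric.

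Quasi-surjectivity is immediate from Corollary~\ref{cor:boundeddistance}: there is a constant $D$ depending only on $P$ such that if $w(P)=+1$ then $d_H^{*}(P(J),J)\le D$ for every $J\in\C^*$, while if $w(P)=-1$ then $d_H^{*}(P(rJ),J)\le D$ for every $J$; either way, every element of $\C^*$ lies within distance $D$ of the image of $P$. The main obstacle to carrying out the plan carefully is verifying that the auxiliary pattern $R$ retains the \emph{strong} winding number $\pm 1$ hypothesis after both composition with $C_K$ and connected sum with $-P(K)$; this is presumably where Figures~2 and~3 promised in the proof of Theorem~\ref{thm:quasi} appear. A secondary subtlety is that \cite[Cor.~4.4]{CDR} was invoked in Proposition~\ref{prop:metricsaredifferent} only under the stronger hypothesis that $\widetilde P$ is unknotted, so one must confirm that the CDR argument carries through when $\widetilde R$ is merely slice.
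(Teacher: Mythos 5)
Your proposal is correct and follows essentially the same route as the paper's own proof: the auxiliary pattern $R$ with $R(L)=-P(K)\#P(K\#L)$ and slice $\widetilde R$ (the paper's Figures~\ref{fig:proof2} and~\ref{fig:proof3}), the reduction via Lemma~\ref{lem:ribbonisom} to the CDR homology-cobordism result combined with Propositions~\ref{prop:hnormequal}, \ref{prop:hnormtopequal} and~\ref{prop:hnormoneovernequal}, the $K\#-K\#J$ trick, and quasi-surjectivity from Corollary~\ref{cor:boundeddistance}. The two subtleties you flag are resolved in the paper exactly as you anticipate: the strong winding number of $R$ is quoted from the proof of \cite[Thm.~5.1]{CDR}, and Lemma~\ref{lem:ribbonisom} sketches why the \cite[Cor.~4.4]{CDR} argument goes through when $\widetilde R$ is merely slice in the relevant category (capping off the standard cobordism with the slice disk exterior).
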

\begin{proof} By Theorem~\ref{thm:quasi}, $P$ is a quasi-isometry and hence is quasi-surjective.

Now we show that $P$ preserves the norm (or pseudo-norm). Since any isometry is an injective function, our proof should be viewed as a generalization of ~\cite[Theorem 5.1]{CDR} where it was shown that any such operator $P$ is injective in the cases $*=ex, top$ or $\frac{1}{n}$, and injective on $\C$ if the $4D$-Poincar\'{e} conjecture holds.  First we prove the theorem in the very special case that $\widetilde{P}=0$ in $\C^{*}$ and $J=0=P(J)$:
\begin{lem}\label{lem:ribbonisom} If $R$ is a strong winding number $\pm 1$ pattern and $\widetilde{R}=0$ in $\C^{*}$ then the satellite operator $R:\C^{*}\to \C^{*}$ preserves the homology norm $\|-\|_H^*$, that is $\|R(K)\|_H^*=\|K\|_H^*$ for each $K$.
\end{lem}
\begin{proof}  Since $R$ has strong winding number $\pm 1$, by ~\cite[Corollary 4.4]{CDR} and, in the case $*=1/n$, by ~\cite[Thm. 2.1]{CFHH}, the zero framed surgeries $M_{R(K)}$ and $M_K$ are smoothly (respectively, topologically, smoothly $\Z[1/n]$-) homology cobordant. Moreover, except in the case $*=1/n$), we may assume  the cobordism has fundamental group  normally generated by each meridian. Thus, by Proposition~\ref{prop:hnormequal}, Proposition~\ref{prop:hnormtopequal} and  Proposition~\ref{prop:hnormoneovernequal},   $\|R(K)\|_H^*=\|K\|_H^*$. 

We sketch the proof of ~\cite[Corollary 4.4]{CDR} in the case $*={ex}$ for the convenience of the reader. The other cases and the proof of ~\cite[Thm. 2.1]{CFHH} are similar. There is a standard cobordism $E$ whose boundary is the disjoint union of $-M_{R(K)}$, $M_R$ and $M_K$ (see for example ~\cite[p. 2198]{CFHH}). This is obtained by gluing $M_{\widetilde{R}}\times [0,1]$ to $M_K\times [0,1]$ by identifying the surgery solid torus in $M_K\times \{1\}$ with the solid torus: $\eta\times D^2\hookrightarrow M_{\widetilde{R}}\times \{1\}$. Since $\widetilde{R}=0$ in $\C^{*}$, it bounds a slice disk $\Delta$ in some smooth homotopy $4$-ball $\mathcal{B}$. Use the manifold $\mathcal{B}-N(\Delta)$ to cap off the $M_{\widetilde{R}}$ boundary component of $E$, yielding a cobordism $V$ between $M_K$ and $M_{R(K)}$. This is the required homology cobordism.
\end{proof}

We return to the general situation in the proof of Theorem~\ref{thm:mainisom}.  Recall that, by definition,
$$
d_H^*(P(J),P(K))=\|-P(K)\#P(J)\|_H^*.
$$
Now we mimic one of the key steps in the proof of ~\cite[Theorem 5.1]{CDR}. Since $K\#-K$ is a slice knot in any category, $J=K\#-K\#J$ in $\C^{*}$, so $P(J)=P(K\#-K\#J)$ in $\C^{*}$. This last knot is pictured on the right-hand side of Figure~\ref{fig:proof2}. Hence
\begin{equation}\label{eq:proof2}
d_H^*(P(J),P(K))=\|\left(-P(K)\#\left(P(K\#-K\#J\right) \right)\|_H^*.
\end{equation}
A picture of the connected-sum of knots on the right-hand side of Equation~(\ref{eq:proof2}) is shown in Figure~\ref{fig:proof2}. The particular form we have pictured for the $-[P(K)]$ summand is not important. This form will not be used.
\begin{figure}[htbp]
\setlength{\unitlength}{1pt}
\begin{picture}(302,180)
\put(-55,0){\includegraphics{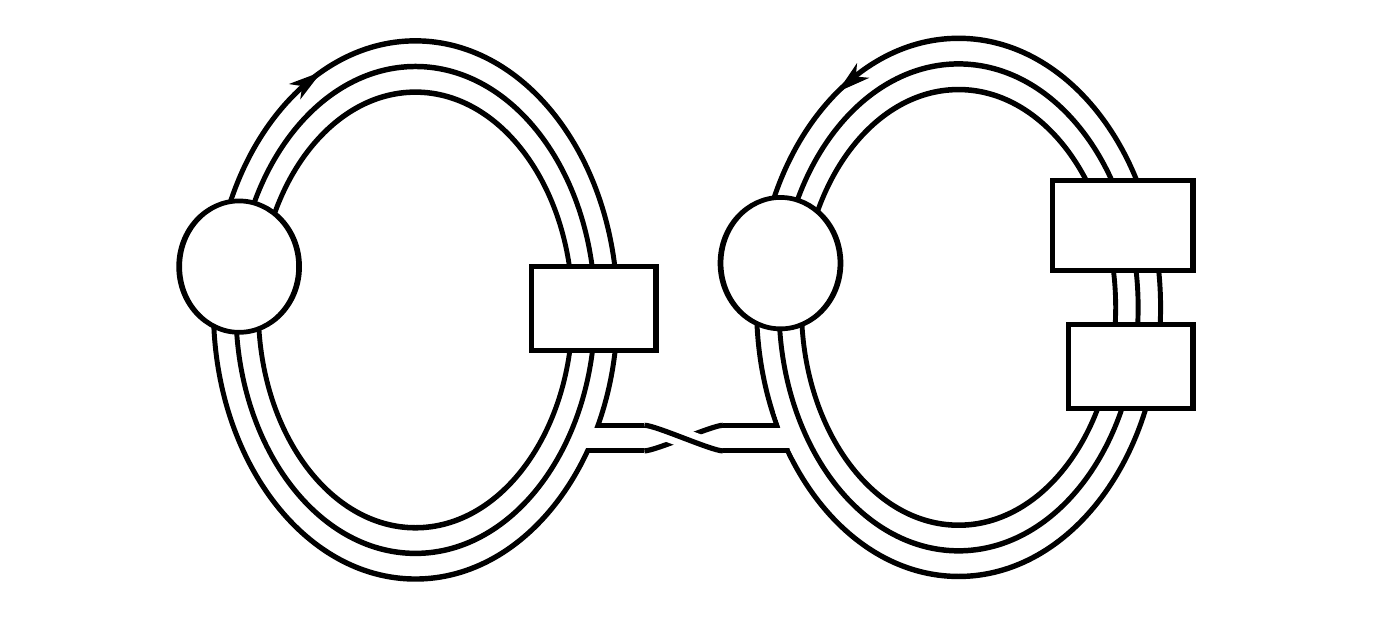}}

\put(8,104){$\overline{P}$}
\put(165,105){$P$}
\put(265,74){$K$}
\put(252,115){$-K\#J$}
\put(112,91){$\overline{K}$}
\end{picture}
\caption{}
\label{fig:proof2}
\end{figure}
Let $R$ be the pattern knot shown in Figure~\ref{fig:proof3}.
\begin{figure}[htbp]
\setlength{\unitlength}{1pt}
\begin{picture}(302,210)
\put(-27,10){\includegraphics{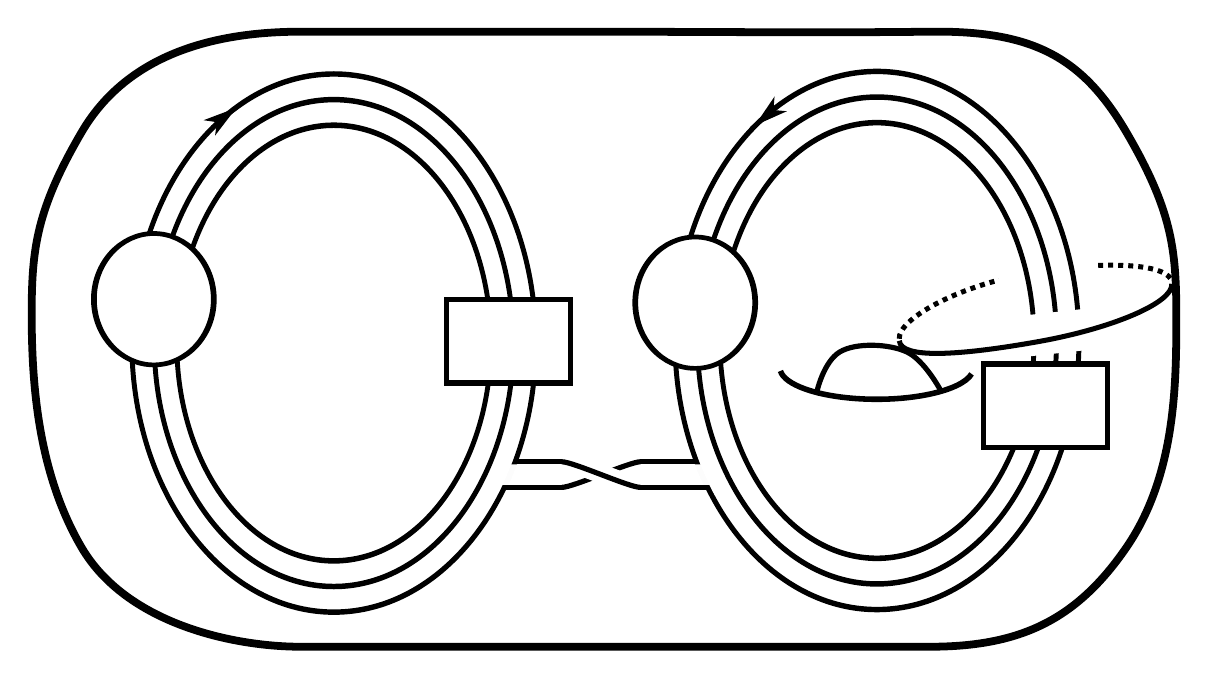}}
\put(13,114){$\overline{P}$}
\put(169,115){$P$}
\put(320,122){$\eta_R$}
\put(267,85){$K$}
\put(114,103){$\overline{K}$}
\end{picture}
\caption{The operator $R=-[P(K)]\# P(K)$}
\label{fig:proof3}
\end{figure}
In terms of this operator Equation~(\ref{eq:proof2}) becomes
\begin{equation}\label{eq:proof3}
d_H^*(P(J),P(K))=\|R(-K\#J)\|_H^*
\end{equation}
Furthermore observe that $\widetilde{R}=-P(K)\#P(K)$ is a ribbon knot hence a slice knot in any category. Also note that the winding number of $R$ is the same as that of $P$, which is $\pm1$. In fact  it was shown in the proof of ~\cite[Theorem 5.1]{CDR}
that $R$ has strong winding number one since $P$ does. We also observe  that the geometric winding number of $R$ is at most that of $P$. Thus by Lemma~\ref{lem:ribbonisom}
$$
d_H^*(P(J),P(K))=\|R(-K\#J)\|_H^*=\|-K\#J\|_H^*\equiv d_H^*(J,K).
$$
Thus we have shown that $P$ is an isometry, hence injective. Thus $P$ is a bijection to its image. Since the induced topology is the discrete topology, all maps are continuous so $P$ is a topological embedding.

\end{proof}

\end{subsection}

\begin{subsection}{General non-zero winding number satellite operators}

\begin{thm}\label{thm:mainfracisom} If $P$ is a winding number $n$  pattern where $n\neq 0$ then  the satellite operator 
$$
P:(\C^{1/n},d_H^{1/n})\to (\C^{1/n},d_H^{1/n})
$$
is an isometric embedding. Moreover $P$ is quasi-surjective.
\end{thm}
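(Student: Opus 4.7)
The plan is to adapt the proof of Theorem~\ref{thm:mainisom} in two ways: replacing strong winding number $\pm 1$ by winding number $n\neq 0$, and replacing the ambient category by $\Z[1/n]$-concordance. For the isometric embedding part I would follow the formal structure of the proof of Theorem~\ref{thm:mainisom} verbatim: given knots $J$ and $K$, construct, for fixed $K$, the auxiliary winding-number-$n$ pattern $R=R_{P,K}$ displayed in Figures~\ref{fig:proof2} and~\ref{fig:proof3}, which has the property that $R(-K\#J)=-P(K)\#P(J)$ in $\C^{1/n}$ and $\widetilde R=-P(K)\#P(K)$ is a ribbon knot, hence smoothly slice, hence $\Z[1/n]$-slice. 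Taking $\|-\|_H^{1/n}$ of both sides of this identity yields
$$
d_H^{1/n}(P(J),P(K))=\|R(-K\#J)\|_H^{1/n}.
$$

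The key new technical step is the winding-number-$n$ analogue of Lemma~\ref{lem:ribbonisom}: for any winding number $n$ pattern $R$ with $\widetilde R=0$ in $\C^{1/n}$, one has $\|R(L)\|_H^{1/n}=\|L\|_H^{1/n}$ for every knot $L$. By Proposition~\ref{prop:hnormoneovernequal}, it suffices to exhibit a smooth $\Z[1/n]$-homology cobordism from $M_L$ to $M_{R(L)}$. Following the sketch in the proof of Lemma~\ref{lem:ribbonisom}, I would take the standard infection cobordism $E$ with $\partial E=-M_{R(L)}\sqcup M_{\widetilde R}\sqcup M_L$ and cap off the $M_{\widetilde R}$ component by the complement of a $\Z[1/n]$-slice disk for $\widetilde R$ inside a $\Z[1/n]$-homology ball. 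A Mayer--Vietoris calculation with $\Z[1/n]$-coefficients should then show that the resulting cobordism $V$ is a $\Z[1/n]$-homology cobordism; the crucial algebraic point is that the integral-homology obstruction present for a general winding-number-$n$ infection involves only $n$-torsion, which becomes invertible after inverting $n$. This generalises \cite[Thm.~2.1]{CFHH} from strong winding number $\pm 1$ to winding number $n$. Applying Proposition~\ref{prop:hnormoneovernequal} then yields $\|R(L)\|_H^{1/n}=\|L\|_H^{1/n}$, and setting $L=-K\#J$ gives $d_H^{1/n}(P(J),P(K))=d_H^{1/n}(J,K)$ as required.

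For quasi-surjectivity, by Proposition~\ref{prop:boundeddistance} the operator $P$ lies at bounded $d_H^{1/n}$-distance from the $(n,1)$-cable operator $C_{n,1}$, so it suffices to prove that $C_{n,1}$ is quasi-surjective on $(\C^{1/n},d_H^{1/n})$. I expect this to be the main obstacle, since for $|n|\geq 2$ one no longer has the luxury of Theorem~\ref{thm:quasi} providing a quasi-inverse. The strategy I would pursue is to construct, given $J\in \C^{1/n}$, a knot $K$ whose zero-framed surgery $M_{C_{n,1}(K)}$ is smoothly $\Z[1/n]$-homology cobordant to $M_J$ via a cobordism of second Betti number bounded independently of $J$. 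A natural candidate for $K$ should come from exploiting the invertibility of multiplication by $n$ on $H_1(M_J;\Z[1/n])=\Z[1/n]$ to build a $\Z[1/n]$-homological inverse to the cabling operation at the level of zero-framed surgeries, at bounded topological cost, thereby producing a uniform $C$ with $d_H^{1/n}(C_{n,1}(K),J)\leq C$ for every $J$.
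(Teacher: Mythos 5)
Your isometric-embedding argument is the paper's own, essentially verbatim: the same reduction via the auxiliary pattern $R=-[P(K)]\#P(K)$ of Figures~\ref{fig:proof2} and~\ref{fig:proof3}, the same observation that $\widetilde R$ is ribbon hence $\Z[1/n]$-slice, and the same appeal to Proposition~\ref{prop:hnormoneovernequal}. The one point of divergence is that your ``key new technical step'' is not new: the statement that a winding number $n$ pattern $R$ with $\widetilde R=0$ in $\C^{1/n}$ yields a smooth $\Z[1/n]$-homology cobordism between $M_L$ and $M_{R(L)}$ is exactly the cited \cite[Thm.~2.1]{CFHH}, which already covers arbitrary non-zero winding number $n$ (it is not a strong winding number $\pm1$ statement needing generalization), and the paper simply invokes it in Lemma~\ref{lem:ribbonisomfrac}. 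Your sketch of a proof (the infection cobordism $E$, capping the $M_{\widetilde R}$ end with the exterior of a $\Z[1/n]$-slice disk, Mayer--Vietoris with $\Z[1/n]$ coefficients) is the standard argument for that cited result, so this half is sound.

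The genuine gap is quasi-surjectivity, which you leave as an unexecuted strategy, and the strategy itself points in an unnecessarily hard direction. You do not need to manufacture, for arbitrary $J$, a knot $K$ with $M_{C_{n,1}(K)}$ $\Z[1/n]$-homology cobordant to $M_J$ (a ``homological inverse to cabling''): the approximate preimage of $J$ can be taken to be $J$ itself. Indeed $d_H^{1/n}(P(J),J)\leq d_H^{1/n}(P(J),J_{(n,1)})+d_H^{1/n}(J_{(n,1)},J)$; the first term is uniformly bounded by Proposition~\ref{prop:boundeddistance} together with Proposition~\ref{prop:metricsarerelated}, and for the second, since $\widetilde{C_{n,1}}$ is the unknot (so certainly $\Z[1/n]$-slice), \cite[Thm.~2.1]{CFHH} provides a $\Z[1/n]$-homology cobordism $V$ from $M_{J_{(n,1)}}$ to $M_J$; gluing $V$ to the standard cobordism realizing connected sum and closing up the $M_{-J}$ end against $M_J$ gives a $4$-manifold bounded by $M_{-J\#J_{(n,1)}}$ whose $\beta_2$, signature, and $H_1(-;\Z[1/n])$ conditions are controlled by constants independent of $J$, so $d_H^{1/n}(J_{(n,1)},J)$ is uniformly bounded. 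Note that Proposition~\ref{prop:hnormoneovernequal} alone only gives equality of norms $\|J_{(n,1)}\|_H^{1/n}=\|J\|_H^{1/n}$, not a bound on the distance between the two classes, so some such gluing argument is genuinely needed; this is what the paper's ``almost identical to Theorem~\ref{thm:mainisom}'' amounts to here, since Theorem~\ref{thm:quasi}, which supplied quasi-surjectivity in Theorem~\ref{thm:mainisom}, is stated only for winding number $\pm1$ and cannot be quoted when $|n|\geq 2$. As written, your quasi-surjectivity paragraph is a hope rather than a proof, and the construction you propose is neither carried out nor needed.
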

\begin{proof} The proof is almost identical to that of Theorem~\ref{thm:mainisom} above. Note that it was already shown in ~\cite[Theorem 5.1]{CDR} that $P$ is injective. To show that $P$ preserves the norm, we repeat the proof of Theorem~\ref{thm:mainisom}, replacing Lemma~\ref{lem:ribbonisom} by the following.

\begin{lem}\label{lem:ribbonisomfrac} If $R$ is a pattern of non-zero winding number $n$ and $\widetilde{R}=0$ in $\C^{1/n}$ then the satellite operator $R:\C^{1/n}\to \C^{1/n}$ preserves the homology norm $\|-\|_H^{1/n}$.
\end{lem}
\begin{proof}  By ~\cite[Theorem 2.1]{CFHH}, the zero framed surgeries $M_{R(K)}$ and $M_K$ are smoothly $\Z[1/n]$-homology cobordant. Thus, by Proposition~\ref{prop:hnormoneovernequal},   $\|R(K)\|_H^{1/n}=\|K\|_H^{1/n}$. 
\end{proof}

\end{proof}

\end{subsection}

\begin{subsection}{Winding number zero satellite operators}

We show that every winding number zero operator is a bounded function, hence is approximately a constant map and thus is an approximate contraction. Not all winding number zero operators are injective functions. It remains possible that some are injective, or nearly so.

\begin{defn}\label{def:approxcontraction} A function $f:X\to Z$ is an approximate contraction if there is some constant $D>0$ such that, for all $x,y\in X$, $d(f(x),f(y))\leq$ max$\{D, d(x,y)\}$.
\end{defn}

Any bounded function is within a bounded distance of a constant map and hence is approximately a contraction, in that, as long as $x,y$ are not too close to each other, $d(f(x),f(y))<d(x,y)$.  Thus it follows from the $n=0$ case of Proposition~\ref{prop:boundeddistance} that:

\begin{prop}\label{prop:windzerobounded} Any winding number zero satellite operator on $(\C^*,d_*)$ is a bounded function, where $(\C^*,d_*)$ is any of the metric spaces we have defined. Thus any  winding number zero satellite operator is an approximate contraction.
\end{prop}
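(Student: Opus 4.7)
The plan is to apply the $n=0$ case of Proposition~\ref{prop:boundeddistance} directly. That proposition says that any winding number $n$ satellite operator is within a bounded $d_*$-distance of the $(n,1)$-cable operator $C_{n,1}$. When $n=0$, by Definition~\ref{def:ncableoperator}, $C_{0,1}$ is by convention the zero operator $Z:\C^*\to \C^*$, which sends every class to the class of the unknot $U$.

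So first I would fix a winding number zero satellite operator $P$ and invoke Proposition~\ref{prop:boundeddistance} to produce a constant $D\geq 0$ (depending only on the pattern, e.g.\ the bound $D\leq \|\widetilde{P}\|_* + \tfrac{1}{2}gw(P)$ extracted from its proof) such that $d_*(P(K), Z(K)) \leq D$ for every knot $K$. Since $Z(K) = U$ for every $K$, this reads $d_*(P(K), U) \leq D$ for every $K$. Then by the triangle inequality,
$$
d_*(P(K), P(J)) \leq d_*(P(K), U) + d_*(U, P(J)) \leq 2D
$$
for all $K, J$, showing that $P$ has image of diameter at most $2D$, i.e.\ $P$ is a bounded function.

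Finally, to conclude that $P$ is an approximate contraction in the sense of Definition~\ref{def:approxcontraction}, I just observe that for any $K, J$,
$$
d_*(P(K), P(J)) \leq 2D \leq \max\{2D,\, d_*(K,J)\},
$$
so $P$ satisfies the definition with constant $2D$.

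There is no real obstacle: the work has been done in Proposition~\ref{prop:boundeddistance}, and the remainder is a two-line triangle inequality argument together with unpacking Definition~\ref{def:approxcontraction}. The only thing to be mildly careful about is that Proposition~\ref{prop:boundeddistance} is proved using $d_s$ and then transferred to the other $d_*$ via Proposition~\ref{prop:metricsarerelated}, so the bound $D$ is valid uniformly across the metrics we have defined; that is what lets the proof cover all of $(\C^*, d_*)$ simultaneously.
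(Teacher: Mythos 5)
Your proposal is correct and is essentially the paper's own argument: both invoke the $n=0$ case of Proposition~\ref{prop:boundeddistance} to place $P$ within bounded distance of a constant operator and then conclude boundedness by the triangle inequality. Your explicit verification of Definition~\ref{def:approxcontraction} is a fine (if routine) addition that the paper leaves implicit.
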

\begin{proof} By Proposition~\ref{prop:boundeddistance}, such a $P$ is within a bounded distance $C$ of the constant operator that sends every class to $\widetilde{P}$. Hence $d_*(P(K),P(J))\leq 2C$.
\end{proof}

We should not expect that \textit{every} winding number zero operator is an injective function, because the zero operator may be viewed as a (degenerate) satellite operator whose pattern knot is an unknot which has zero geometric winding number. But it is easy to find non-degenerate patterns that yeild the zero operator by choosing a pattern of \textit{non-zero} geometric winding number which is concordant, inside the solid torus, to this unknot. Such a pattern will be called a \textit{trivial pattern} since it induces the zero operator. There are various algebraic conditions on the pattern that ensure that an operator is non-trivial (see ~\cite[Def. 7.2]{CHL5}) and is indeed injective on very large subsets of $\C$. 

But even here the precise situation is unclear. For example, consider the family of winding number zero patterns $R^{k}$ shown on the left-hand side of Figure~\ref{fig:noninjectiveoperators}. The solid torus is the exterior of a neighborhood of the dashed circle $\eta$. This pattern has an obvious genus one Seifert surface. The $-k$ signifies the number of full twists between the two bands of that surface (without twisting the two strands of a fixed band). A knot in a box indicates that all strands passing through that box are tied into parallel copies of the indicated knot. The value of this operator on a knot $K$ is shown on the right-hand side of the figure. 
\begin{figure}[htbp]
\setlength{\unitlength}{1pt}
\begin{picture}(327,151)
\put(-20,0){\includegraphics{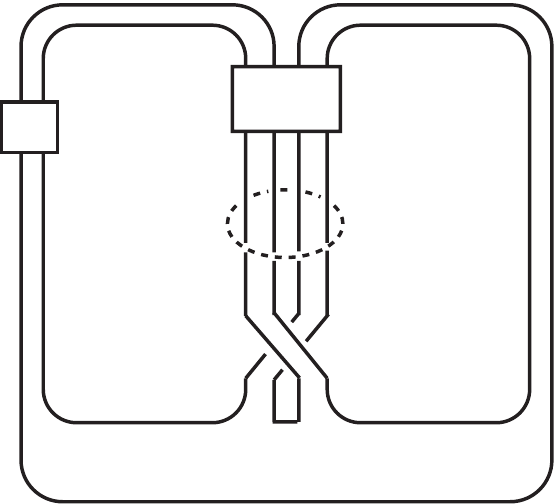}}
\put(194,0){\includegraphics{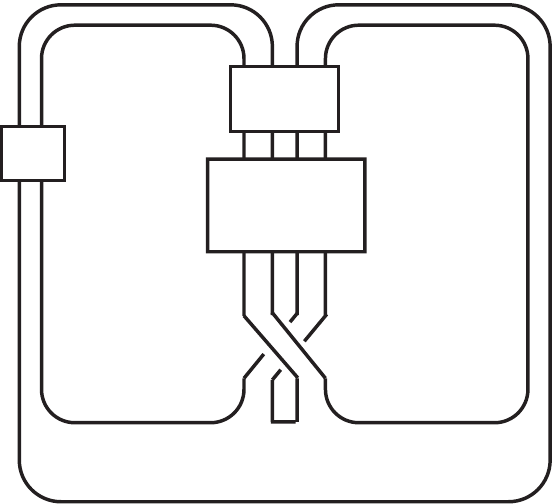}}
\put(54,113){$-k$}
\put(84,78){$\eta$}
\put(272,83){$K$}
\put(269,113){$-k$}
\put(200,98){$J$}

\put(148,56){$R^{k}(K)\equiv$}
\put(-17,106){$J$}
\put(-50,56){$R^{k}\equiv$}
\end{picture}
\caption{A family of non-trivial non-injective operators $R^{k}$}\label{fig:noninjectiveoperators}
\end{figure}
Then $R^k(U)$ is a slice knot since the core of the right-hand band has self-linking zero and has the knot type of the unknot. Similarly $R^k(\overline{J})$ is a slice knot since the left-hand band has zero self-linking and has the knot type of the ribbon knot $J\#-J$. Hence $R^k$ is not injective since both $U$ and  $\overline{J}$ are sent to $0\in \C$. Since an arbitrary pattern that is a genus one ribbon knot has two such ``metabolizing curves'', this is the generic situation. 

But recent examples indicate that the situation is even more complicated. In ~\cite{CD13} the knot, $\widetilde{R}$ in Figure~\ref{fig:DoublingOperatorBox}, was shown to be a slice knot for any knot $J$. If we let $\eta$ be a circle linking the right-hand band, the the resulting operator, $R$, has winding number zero. Then $R(U)=\widetilde{R}$ is slice. But as above there are two metabolizing curves. The core of the right-hand band of the Seifert surface for $\widetilde{R}$ has self-linking zero and has the knot type of $J_1=J_{(2,1)}\#-J$ so $R(-J_1)$ is also slice. There is a circle that goes over each band once that has self-linking zero which has a  knot type, say, $J_2$, so $R(-J_2)$ is also slice. Thus there are at least three knots (provably distinct for most $J$) that are sent to $0$ by the operator $R$.

\begin{figure}[h!]
\setlength{\unitlength}{1pt}
\begin{picture}(350,155)
\put(50,0){\includegraphics[height=2.08in]{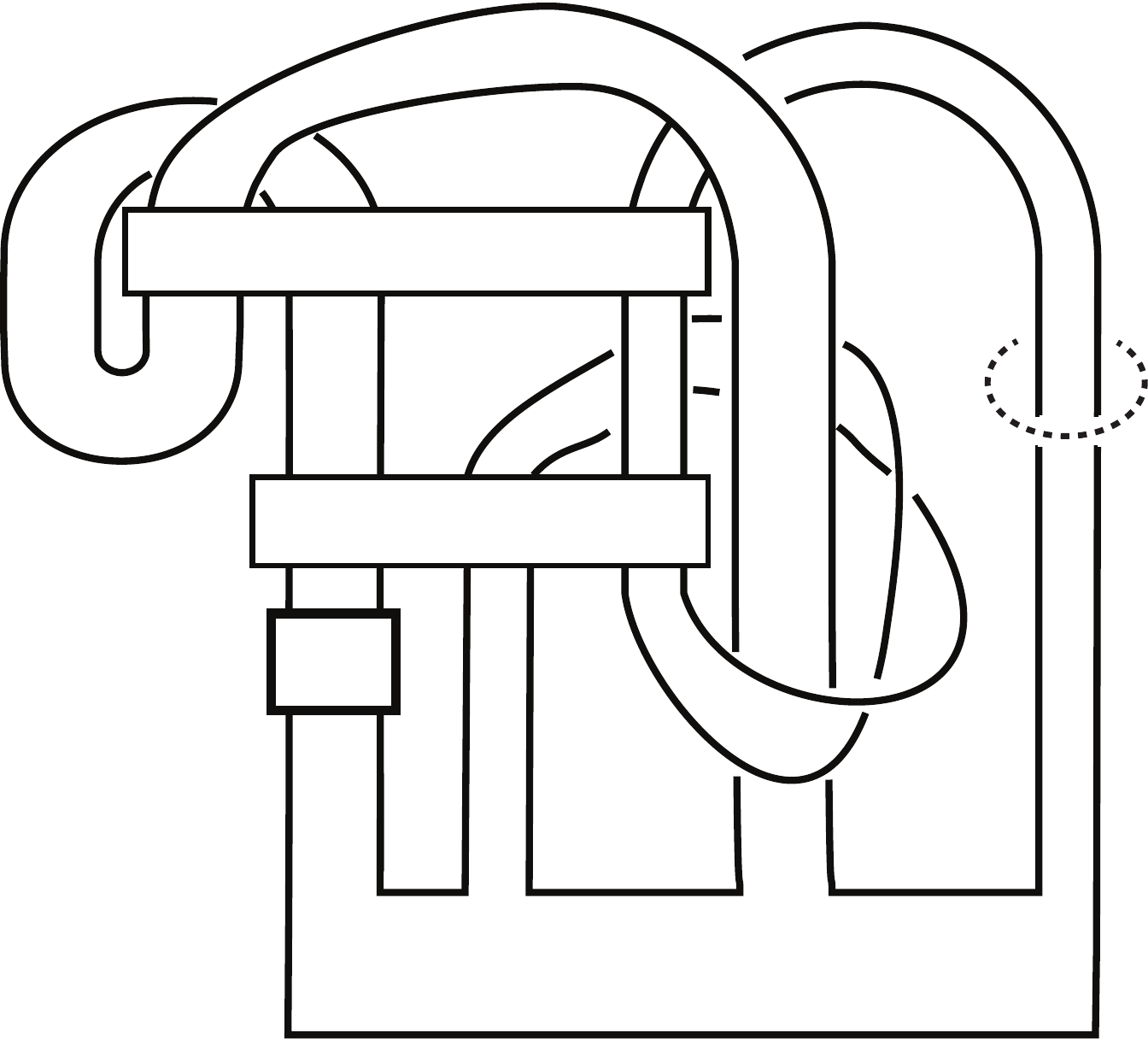}}
\put(221,95){$\eta$}
\put(100,110){$-J$}
\put(115,70){$J$}
\put(91,52){$+2$}

\end{picture}
\caption{}\label{fig:DoublingOperatorBox}
\end{figure}

Nonetheless there is a lot of evidence that some winding number zero operators are injective, or are nearly so ~\cite{CHL5}.

\end{subsection}

\begin{subsection}{Comparison of satellite operators and other natural operators}

\begin{prop}\label{prop:notlikesatellite} For $m\notin\{0, 1\}$  the function $f_m$ given by multiplication by $m$ on $\C$ (respectively on $\C^{ex}$) is not within a bounded distance of any satellite operator, where the metric is $d_s$ (respectively $d_s^{ex}$). The function $f_0$ (namely the zero operator, is not within a bounded distance of any non-zero winding number satellite operator. The identity (i.e. $f_1$) is not within a bounded distance of any satellite operator whose winding number is not $\pm 1$.
\end{prop}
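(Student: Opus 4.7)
The cases $m=0$ and $m=1$ follow directly from Proposition~\ref{prop:satellitemnotn}: the zero operator $f_0=Z=C_{0,1}$ is itself a winding number $0$ satellite operator, and the identity $f_1$ is the satellite operator with pattern equal to the oriented core of $ST$, which has winding number $+1$. Hence $f_0$ cannot be within bounded distance of any satellite of winding number $n\neq 0$, and $f_1$ cannot be within bounded distance of any satellite of winding number $n\notin\{\pm 1\}$.

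The substantive assertion is the case $m\notin\{0,1\}$, where $f_m$ is in general not a satellite operator, so Proposition~\ref{prop:satellitemnotn} does not apply. Here I would argue directly. By Proposition~\ref{prop:boundeddistance} any winding number $n$ satellite operator lies within bounded $d_s$-distance (respectively $d_s^{ex}$-distance) of $C_{n,1}$, so it suffices to show that for every $n\in\Z$, $f_m$ is not within bounded distance of $C_{n,1}$; the plan is to exhibit a family of knots on which $g_s\bigl(f_m(K)\#-C_{n,1}(K)\bigr)$ grows without bound.

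The workhorse is the classical signature $\sigma=\sigma_{-1}$, which is additive under connected sums and satisfies $|\sigma(K)|\leq 2g_s(K)$ as well as $|\sigma(K)|\leq 2g_{ex}(K)$ (the latter by the Kauffman--Taylor topological signature bound). Combined with Litherland's formula $\sigma_\omega(P(K))=\sigma_\omega(\widetilde{P})+\sigma_{\omega^n}(K)$ and the fact that the pattern $T_{n,1}$ of $C_{n,1}$ is unknotted, one obtains $\sigma(C_{n,1}(K))=\sigma_{(-1)^n}(K)$. Taking $K_k=kT$ for $T$ the right-handed trefoil, with $\sigma(T)=-2$ and $\sigma_1(T)=0$, one computes
\[
\sigma\bigl(f_m(K_k)\#-C_{n,1}(K_k)\bigr)\;=\;m\sigma(kT)-\sigma_{(-1)^n}(kT)\;=\;\begin{cases}-2mk & \text{if $n$ is even,}\\ -2(m-1)k & \text{if $n$ is odd.}\end{cases}
\]
Since $m\notin\{0,1\}$, the coefficients $|m|$ and $|m-1|$ are both strictly positive, so in either parity of $n$ the signature---and hence $g_s$ (respectively $g_{ex}$)---grows linearly in $k$, contradicting any supposed bound.

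The main obstacle is really only the parity dovetailing: the even-$n$ computation detects every $m\neq 0$ and the odd-$n$ computation detects every $m\neq 1$, and together they cover exactly the hypothesis $m\notin\{0,1\}$. This alignment is what pins down the exceptional set $\{0,1\}$ and explains why these two values of $m$ must be excluded from the first part of the proposition.
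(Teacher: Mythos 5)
Your argument is correct, and for the substantive case $m\notin\{0,1\}$ it is essentially the paper's: the paper likewise reduces to the cable operator $C_{n,1}$ via Proposition~\ref{prop:boundeddistance}, bounds $d_s$ and $d_s^{ex}$ from below by half the classical signature, and evaluates on connected sums of trefoils; its version of your parity split is the observation that for such a sum $J$ the Levine--Tristram function takes only the values $0$ and $\sigma(J)$, so $\sigma(J_{(n,1)})\in\{0,\sigma(J)\}$ and either value yields at least $\min(|m|,|m-1|)\,|\sigma(J)|$, which is exactly your dichotomy (your explicit evaluation $\sigma(C_{n,1}(K))=\sigma_{(-1)^n}(K)$ is the correct reading of Litherland and is, if anything, cleaner than the paper's phrasing). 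Where you genuinely diverge is the treatment of $m=0$ and $m=1$: the paper proves these directly --- for $f_0$ by choosing $J$ with $|\sigma(J_{(n,1)})|$ large, and for $f_1$ by using a twist knot whose Alexander polynomial has roots of negative real part --- whereas you invoke Proposition~\ref{prop:satellitemnotn}, which in the paper appears afterwards and is presented as a generalization of precisely these two statements. Your route is logically legitimate, since the proof of Proposition~\ref{prop:satellitemnotn} (Levine--Tristram signatures of cables at a carefully chosen $\omega$) nowhere uses Proposition~\ref{prop:notlikesatellite}; to make it airtight you should say explicitly that $f_1$ is the satellite operator with pattern the oriented core of $ST$ (equivalently $C_U$), hence has winding number $+1$, that $f_0=C_{0,1}$ is by the paper's convention a winding number zero satellite operator, and that when the smaller of the two winding numbers is $0$ the root-taking step in the proof of Proposition~\ref{prop:satellitemnotn} degenerates but the needed vanishing is immediate because the $(0,1)$-cable is the unknot, so $\sigma_\omega(J_{(0,1)})=0$ for every $\omega$. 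What your route buys is economy, turning the two exceptional cases into corollaries of the general winding-number comparison; what the paper's route buys is a self-contained argument that respects the order of exposition and motivates the later, more general proposition.
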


\begin{proof}  The proof will be by contradiction. Suppose that $m\notin \{0,1\}$ and that $f_m$ is within $D$ of the winding number $n$ satellite operator $P$. By Proposition~\ref{prop:boundeddistance}, $P$ is within some bounded distance $D'$ of the cabling operator $C_{(n,1)}$. Let $E=D+D'$. Consequently, for every knot $J$ we would have that
$$
d(mJ,J_{(n,1)})\leq E,
$$
or, equivalently,
$$
\|-mJ\#J_{(n,1)}\|\leq E.
$$
But both the slice genus and the exotic slice genus are bounded below by one half of the absolute value of the classical knot signature. Hence to reach a contradiction we need only specify a $J$  (given $m,n$ and $\widetilde{P}$) such that
$$
|\sigma(-mJ)+\sigma(J_{(n,1)})|> 2E.
$$
We assume that $m\notin \{0,1\}$. We can assure that
$$
|\sigma(J)|>2E,
$$
by choosing $J$ to be the connected-sum of a sufficiently large  number of left-handed trefoil knots. Note that for such a $J$ the Levine-Tristram signature function takes on only two values, namely $0$ and $\sigma(J)$. Thus $\sigma(J_{(n,1)})$ is either zero (if $n=0$) or is equal to the exp($\pi/|n|$)-signature of $J$, so in any case is either zero or is equal to $\sigma(J)$. Hence we have
$$
|\sigma(-mJ)+\sigma(J_{(n,1)})|\geq |(m-1)||\sigma(J)|>2E,
$$
resulting in a contradiction.

Moreover, in the case that  $m=0$ then, as long as $n\neq 0$,  we can choose $J$ such that $|\sigma(J_{(n,1)})|$ is very large so that
$$
|\sigma(-mJ)+\sigma(J_{(n,1)})|= |\sigma(J_{(n,1)})|>2E,
$$
for a contradiction.  Hence the times zero operator is not within a bounded distance of any satellite operator with \textit{non-zero} winding number.

In the case that $m=1$ we can find a twist knot $T$ of signature $2$ wherein the roots of its Alexander polynomial have real part less than zero.  Then, if $n\neq \pm 1$, then $\sigma(T_{(n,1)})=0$. Then let $J$ be the connected sum of a large numer of such $T$, so that
$$
|\sigma(J)|> 2E.
$$
and $\sigma(J_{(n,1)})=0$. Hence the times one operator (namely the identity) is not within a bounded distance of any satellite operator whose winding number is not $\pm 1$.
\end{proof}
\begin{cor}\label{cor:notlikesatellite}  Neither the mirror image operator nor the inverse operator on $\C$ (respectively on $\C^{ex}$) is  within a bounded distance of any satellite operator, where the metric is $d_s$ (respectively $d_s^{ex}$).
\end{cor}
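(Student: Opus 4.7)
The plan is to reduce the inverse-operator case directly to Proposition~\ref{prop:notlikesatellite} and to prove the mirror-image case by mimicking its signature argument. The inverse operator $J\mapsto -J$ is exactly the function $f_{-1}$ of Definition~\ref{def:timesmoperators}, and since $-1\notin\{0,1\}$, Proposition~\ref{prop:notlikesatellite} immediately rules out its lying within a bounded distance of any satellite operator in either $(\C,d_s)$ or $(\C^{ex},d_s^{ex})$. So all the work is in the mirror-image case.

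For the mirror-image operator $J\mapsto\overline{J}$, I would argue by contradiction. Suppose it lay within some $E>0$ of a winding number $n$ satellite operator $P$. By Proposition~\ref{prop:boundeddistance}, $P$ is within a bounded distance $D'$ of the $(n,1)$-cable operator, so with $E'=E+D'$ we get
$$
\|\overline{J}\#-J_{(n,1)}\|_s \leq E'
$$
for every knot $J$. Since the classical knot signature is additive under connected sum, reversed by mirror image, preserved by reverse, and bounded above by twice the slice genus (and by twice the exotic slice genus), this would force
$$
|\sigma(J)+\sigma(J_{(n,1)})| \leq 2E'
$$
uniformly in $J$, using that $\sigma(\overline{J})=-\sigma(J)$ and $\sigma(-J_{(n,1)})=-\sigma(J_{(n,1)})$.

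To obtain the contradiction, I would choose $J$ to be the connected sum of a large number of left-handed trefoils, exactly as in the proof of Proposition~\ref{prop:notlikesatellite}. Then $|\sigma(J)|$ can be made arbitrarily large, and the Levine-Tristram signature function of such a $J$ takes only the values $0$ and $\sigma(J)$, so $\sigma(J_{(n,1)})$ equals $0$ (if $n=0$) or equals $\sigma(J)$ (if $n\ne 0$). In either case $|\sigma(J)+\sigma(J_{(n,1)})|\geq |\sigma(J)|$, which cannot be uniformly bounded. The argument for $(\C^{ex},d_s^{ex})$ is identical because $\tfrac{1}{2}|\sigma(K)|$ bounds the exotic slice genus from below.

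There is no real obstacle here; the heavy lifting is done by Proposition~\ref{prop:notlikesatellite} and Proposition~\ref{prop:boundeddistance}. The only subtle point is the sign bookkeeping for signatures, namely that $\sigma(\overline{J})=-\sigma(J)$ while $\sigma(rJ)=\sigma(J)$, so both the mirror-image operator and the inverse operator negate signature and the same uniform-signature argument applies to both.
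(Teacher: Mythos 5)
Your proposal is correct, but for the mirror-image half you take a genuinely different route from the paper. You handle the inverse operator exactly as the paper does, as the case $m=-1$ of Proposition~\ref{prop:notlikesatellite}. For the mirror-image operator, however, the paper does not redo any signature estimates: it observes that the inverse operator is the composition of the mirror-image operator followed by the reverse operator, that the reverse operator is both an isometry and a satellite operator, and that a composition of satellite operators is again a satellite operator; hence if the mirror-image operator were within a bounded distance of a satellite operator, the inverse operator would be too, contradicting the first half. Your argument instead runs the signature machinery of Proposition~\ref{prop:notlikesatellite} directly: reduce $P$ to the $(n,1)$-cable via Proposition~\ref{prop:boundeddistance}, use $\sigma(\overline{J})=-\sigma(J)$ and the lower bound $g_s\geq\tfrac12|\sigma|$ (valid also for $g_{ex}$), and take $J$ a large sum of left-handed trefoils so that $\sigma(J_{(n,1)})\in\{0,\sigma(J)\}$ has the same sign as $\sigma(J)$ and no cancellation occurs. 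This is sound; the only imprecision is your parenthetical claim that $\sigma(J_{(n,1)})=\sigma(J)$ whenever $n\neq 0$ (for even $n$ the cabling formula gives $\sigma_{\omega^{n}}(J)$ at $\omega=-1$, which is $0$), but since both possible values $0$ and $\sigma(J)$ give $|\sigma(J)+\sigma(J_{(n,1)})|\geq|\sigma(J)|$, your conclusion is unaffected. The trade-off: the paper's composition trick is shorter and highlights a reusable structural fact (post-composing with a satellite isometry preserves the property of being boundedly close to a satellite operator), though it silently uses that composites of satellite operators are satellite operators; your direct argument is self-contained, avoids that fact, and produces explicit constants uniformly in the winding number.
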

\begin{proof} The inverse operator is the case $m=-1$ of Proposition~\ref{prop:notlikesatellite}. Since the inverse operator is  the composition of the mirror image operator followed by the reverse operator, and since the reverse operator is both an isometry and a satellite operator, if the mirror image operator were within a bounded distance of a satellite operator then the inverse operator would be within a bounded distance of a composition of satellite operators, which is a satellite operator. This is a contradiction.
\end{proof}

The second two statements of Proposition~\ref{prop:notlikesatellite} can easily be generalized.

\begin{prop}\label{prop:satellitemnotn} Suppose $m\neq \pm n$.   Then  no winding number $m$ satellite operator is within a bounded distance of any winding number $n$ satellite operator.
\end{prop}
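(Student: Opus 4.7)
The plan is to proceed by contradiction, adapting the strategy of Proposition~\ref{prop:notlikesatellite}. By Proposition~\ref{prop:boundeddistance}, any winding number $m$ satellite operator is within bounded distance of $C_{(m,1)}$, and likewise any winding number $n$ satellite operator is within bounded distance of $C_{(n,1)}$. Thus, if the two satellite operators in question were within bounded distance, then so would be $C_{(m,1)}$ and $C_{(n,1)}$, which would yield a constant $E$ with $\|J_{(m,1)}\# -J_{(n,1)}\|_s \leq E$ for every knot $J$. The goal becomes to produce a family of knots $J$ making this norm arbitrarily large.

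The lower bound for the slice genus comes from Levine--Tristram signatures: for any $\omega \in S^1\setminus\{1\}$ that is not a root of the Alexander polynomial of $K$, one has $2 g_s(K) \geq |\sigma_\omega(K)|$. The satellite formula specializes, for the $(k,1)$-cable (whose pattern is an unknot in the solid torus with winding number $k$), to $\sigma_\omega(J_{(k,1)}) = \sigma_{\omega^k}(J)$. Combined with additivity under connected sum, this yields
$$
\sigma_\omega\bigl(J_{(m,1)}\# -J_{(n,1)}\bigr) = \sigma_{\omega^m}(J) - \sigma_{\omega^n}(J),
$$
so the task reduces to finding, for our fixed pair $m \neq \pm n$, a knot $J$ and an $\omega \in S^1\setminus \{1\}$ at which this difference can be made unboundedly large.

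If $m = 0$ (or symmetrically $n = 0$), I would choose $\omega$ to be an $n$-th root of $-1$, so that $\omega^m = 1$ and $\omega^n = -1$; taking $J$ to be a large connected sum of left-handed trefoils then makes $\sigma_{\omega^m}(J) = 0$ while $|\sigma_{\omega^n}(J)| = |\sigma(J)|$ is as large as desired, exactly as in Proposition~\ref{prop:notlikesatellite}. For both $m, n$ nonzero with $m \neq \pm n$, I would select a generic $\omega \in S^1$ so that $\omega^m$, $\overline{\omega^m}$, $\omega^n$, $\overline{\omega^n}$ are pairwise distinct and none equals $1$; since $m$, $m-n$, and $m+n$ are all nonzero (using $m \neq \pm n$), the conditions $\omega^m=1$, $\omega^{m-n}=1$, $\omega^{m+n}=1$ exclude only finitely many $\omega$ on any compact arc, so such $\omega$ are dense in $S^1$. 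I would then invoke a standard Levine--Tristram realization result to produce a knot $T_0$ whose signature function is supported on arbitrarily narrow symmetric arcs around $\omega^m$ and $\overline{\omega^m}$, taking value $\pm 2$ there and vanishing at $\omega^n$ and $\overline{\omega^n}$. Taking $J$ to be a large connected sum of copies of $T_0$ then produces unbounded values of $|\sigma_{\omega^m}(J)-\sigma_{\omega^n}(J)|$.

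The main delicate step is this localization lemma: producing a knot whose Levine--Tristram signature function is concentrated in a narrow arc around a prescribed non-real point of $S^1$. This can be effected by suitable twist knots, by cables of appropriate torus knots, or by direct appeal to realization theorems for Levine--Tristram signature functions. Once this is in hand, the combination of the signature bound, the cable formula, and additivity under connected sum assembles routinely into the contradiction and completes the proof.
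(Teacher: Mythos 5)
Your overall reduction is the paper's: by Proposition~\ref{prop:boundeddistance} it suffices to separate the cable operators $C_{(m,1)}$ and $C_{(n,1)}$, and the separation is detected by Levine--Tristram signatures, which bound the slice genus from below; your $m=0$ case is exactly the argument of Proposition~\ref{prop:notlikesatellite}. Where you genuinely diverge is at the crux. The paper never invokes any realization theorem: it takes $J$ to be a large connected sum of left-handed trefoils, whose signature function has a single pair of jump points at the root $\alpha$ of the trefoil's Alexander polynomial and its conjugate, and then (assuming $|n|<|m|$, which is where $m\neq\pm n$ enters, since it forces $|m|\neq|n|$) chooses $\omega$ with argument strictly between those of the $|m|$-th and $|n|$-th roots of $\alpha$ of smallest positive argument, close to the former; then $\omega^{|n|}$ lands before the jump, so $\sigma_\omega(J_{(n,1)})=0$, while $\omega^{|m|}$ lands just past it, so $|\sigma_\omega(J_{(m,1)})|>2E$. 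You instead fix a generic $\omega$ first and then ask for a knot whose signature function is localized on a narrow arc about $\omega^m$ and vanishes at $\omega^n$. That localization statement is true and citable (Cha--Livingston-type results on realizing Levine--Tristram signature functions), so your argument can be completed, but it is the entire content of the proof and you leave it as a black box; moreover, of your proposed elementary realizations, twist knots cannot do the job: their Alexander polynomial roots confine the possible jump locations to a restricted, non-dense set of arguments, and each twist-knot signature function is supported on a long arc through $-1$, so connected sums and differences of twist knots only localize between those particular jump points, not around an arbitrary prescribed $\omega^m$. In short, the paper's trick of adapting $\omega$ to one fixed jump point buys a short, self-contained argument using only the trefoil, whereas your route trades that trick for a heavier (though standard) realization input that would need to be stated and cited precisely; it does have the mild virtue of treating $\omega^m$ and $\omega^n$ symmetrically rather than splitting into $|n|<|m|$.
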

\begin{proof} As above, for a proof by contradiction,  it suffices to show that, given $E>0$, we can find a knot $J$ such that 
$$
\|-J_{(m,1)}\#J_{(n,1)}\|> E.
$$
For this, as above it suffices to find $\omega\in S^1$ such that 
$$
|\sigma_\omega(-J_{(m,1)})+\sigma_\omega(J_{(n,1)})|>2E,
$$
where $\sigma_\omega$ is the value of the (normalized) Levine-Tristram signature function at $\omega$. This is true because both the slice genus and the exotic slice genus are bounded below by one half of $|\sigma_\omega|$. Let $J$ be the connected sum of more than 2$E$ copies of the left-handed trefoil $-T$, and let $\alpha\in S^1$ be the root of the Alexander polynomial of the trefoil with positive imaginary part. Then, for any $\beta\in S^1$ (with positive imaginary part) whose argument is greater than that of $\alpha$, $\sigma_\beta(-T)=2$, so $\sigma_\beta(J)>2E$; whereas, for any $\beta\in S^1$ (with positive imaginary part) whose argument is less than that of $\alpha$, $\sigma_\beta(-T)=0$, so $\sigma_\beta(J)=0$.

Suppose $|n|<|m|$. Let $r_n$ and $r_m$ be the $|n|^{th}$- and $|m|^{th}$-roots of $\alpha$ that have smallest positive argument. Then $arg(r_n)>arg(r_m)$. Let $\omega\in S^1$ have argument between these and be close to $r_m$. Then 
$$
\sigma_\omega(J_{(n,1)})=\pm\sigma_{\omega^{|n|}}(J)=0
$$
since the argument of $\omega^{|n|}$ is less than that of $\alpha$. On the other hand
$$
|\sigma_\omega(J_{(m,1)})|=|\pm\sigma_{\omega^{|m|}}(J)|>2E
$$
since the argument of $\omega^{|m|}$ is slightly greater than that of $\alpha$. These choices lead to the desired contradiction.
\end{proof}

\begin{rem} With more work, the $\pm$ in Proposition~\ref{prop:satellitemnotn} should be able to be replaced by $+$.  It has been shown, using Casson-Gordon invariants that some knots are not concordant to their reverses ~\cite{KiLi1999}\cite[Thm. 5.5.2]{CollinsThesis}\cite{Naik1}. It is also known that Casson-Gordon invariants, in a certain sense, give lower bounds for the slice genus. Thus it may be able to be shown, with some effort beyond what is in the literature, that  the reverse operator is not within a bounded distance of the identity. This would imply that no winding number $-1$ satellite operator is within a bounded distance of any  winding number $+1$ satellite operator. For $n>1$, to show that no winding number n satellite operation is within a bounded distance of any winding number $-n$ operation, would seem to require calculations (of, say, Casson-Gordon invariants) far beyond what is currently in the literature. Nonetheless there is little doubt, in the authors' opinion, as to its veracity.
\end{rem}

\end{subsection}

\section{Satellite operators as quasi-morphisms}\label{sec:quasimorpism}

\begin{thm}\label{thm:quasimorphism} Any satellite operator $P:\C\to \C$ is a  quasi-homomorphism with respect to norm given by the slice genus, $\|-\|_s$. Similarly, any satellite operator $P:\C^{ex}\to \C^{ex}$ is a $\qh$ with respect to either $d_s^{ex}$ or $d_H$. 
\end{thm}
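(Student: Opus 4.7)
Let $n$ denote the winding number of the pattern $P$. The strategy is to reduce to the ``standard model'' case of the $(n,1)$-cable operator $C_{n,1}$ and then build an explicit bounded-genus cobordism by band surgery.

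First I would record the formal principle that being within a bounded distance of a quasi-homomorphism is itself being a quasi-homomorphism: if $\|f(K)-g(K)\|\le D'$ for all $K$ and $g$ has defect $D_g$, then four applications of the triangle inequality yield
\[
\|f(J\#K)-f(J)-f(K)\| \;\le\; \|g(J\#K)-g(J)-g(K)\| + 3D' \;\le\; D_g + 3D'.
\]
By Proposition~\ref{prop:boundeddistance}, $P$ is within a bounded distance of $C_{n,1}$ with respect to each of $d_s$, $d_s^{ex}$, and $d_H$, so it suffices to prove the theorem for the single operator $C_{n,1}$. Moreover, by Proposition~\ref{prop:metricsarerelated} we have $d_H\le d_s^{ex}\le d_s$ and in particular $\|K\|^{ex}_s\le\|K\|_s$ for every knot $K$, so producing the bound with respect to the smooth slice genus norm immediately implies the statements for $d_s^{ex}$ and $d_H$.

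The heart of the argument is therefore to construct, for each $n$, a constant $C_n$ such that for all knots $J,K$ there is a smoothly and properly embedded compact oriented surface in $S^3\times[0,1]$ of genus at most $C_n$ whose boundary is $C_{n,1}(J\#K)\hookrightarrow S^3\times\{0\}$ together with $-(C_{n,1}(J)\#C_{n,1}(K))\hookrightarrow -S^3\times\{1\}$; by Proposition~\ref{prop:altdefslicedist} this gives the desired bound $\|C_{n,1}(J\#K)-C_{n,1}(J)-C_{n,1}(K)\|_s\le C_n$. I would proceed as follows. Choose a $2$-sphere $S\subset S^3$ separating $J\#K$ into its two summands; it meets $N(J\#K)$ in an annulus $A$, and since the $(n,1)$-torus pattern has geometric winding number $n$, the knot $C_{n,1}(J\#K)$ meets $A$ in exactly $n$ parallel arcs. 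By contrast the target $C_{n,1}(J)\#C_{n,1}(K)$ meets the analogous separating sphere in just a single arc. Perform $n-1$ saddle moves on the $J$-side of $A$ and $n-1$ saddle moves on the $K$-side, each one chosen to merge two adjacent parallel arcs into one by attaching a small band in a neighborhood of $A$; after all $2(n-1)$ saddles the $J$-side tangle becomes a single long arc in $N(J)$ wrapping $n$ times, namely $C_{n,1}(J)$ with a small arc removed, and similarly for the $K$-side, so the final knot is precisely $C_{n,1}(J)\#C_{n,1}(K)$. The trace of these saddles is an oriented surface of genus at most $2(n-1)$, giving $C_n=2(n-1)$.

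The one point that demands care --- and which I expect to be the main technical obstacle --- is verifying that the band moves can be chosen simultaneously so that the end knot is literally $C_{n,1}(J)\#C_{n,1}(K)$ and the trace is a single orientable surface of the claimed genus. This is a fiddly diagrammatic check that uses the specific structure of the $(n,1)$-torus pattern: because the $n$ strands are braided together by a single meridional twist, they can be merged into one strand in a controlled way without introducing additional topology. Once this construction is carried out, combining it with the reductions in the first two paragraphs yields a defect of $D_P = 2(|n|-1)+3D'$ for the slice genus case, and the same bound transfers to $d_s^{ex}$ and $d_H$ through the norm inequalities of Proposition~\ref{prop:metricsarerelated}.
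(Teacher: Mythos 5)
Your proposal is correct and follows essentially the same route as the paper: reduce to the $(n,1)$-cable operator via Proposition~\ref{prop:boundeddistance} together with the observation that anything a bounded distance from a quasi-homomorphism is one (the paper's Lemma~\ref{lem:boundedquasi}), then bound the defect of $C_{n,1}$ by an explicit band/saddle cobordism in $S^3\times[0,1]$ from $(K\#J)_{(n,1)}$ to $K_{(n,1)}\#J_{(n,1)}$ of genus depending only on $n$, with the other metrics handled by the inequalities of Proposition~\ref{prop:metricsarerelated}. The only differences are cosmetic (your band count and the resulting explicit constant differ slightly from the paper's $2n$-band construction, but both give a bound independent of $J$ and $K$, which is all that is needed).
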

\begin{proof} The following is well-known.
\begin{lem}\label{lem:boundedquasi} If  $f:G\to (A,d)$ is a quasi-homomorphism and $g: G\to A$ is within a bounded distance of $f$ then $g$ is a quasi-homomoprhism.
\end{lem}
\begin{proof}[Proof of Lemma~\ref{lem:boundedquasi}] By sub-additivity, $\|a\|\leq \|a-b\|+\|b\|$. The lemma now follows quickly from setting $a=g(xy)-g(x)-g(y)$ and $b=f(xy)-f(x)-f(y)$.
\end{proof}
By Proposition~\ref{prop:boundeddistance}, a satellite operator of winding number $n$ is within a bounded distance of the operator $C_{n,1}$, so by Lemma~\ref{lem:boundedquasi}, it suffices to show that  latter is a $\qh$.  This is done geometrically. By adding $n$ carefully placed bands, the $(n,1)$-cable of the knot $K\# J$ can be transformed to the disjoint union of the $(n,1)$-cable of $K$ with an $n$-component link consisting of parallel copies of $J$. By adding $n-1$ more bands, the latter can be transformed to the $(n,1)$-cable of $J$. Adding one more band transforms the disjoint union to the connected sum. Hence there is a cobordism in $S^3\times [0,1]$ from the $(K\# J)_{(n,1)}$ to $K_{(n,1)}\# J_{(n,1)}$ whose genus is independent of $K$ and $J$.  Hence
$$
\|(K\# J)_{(n,1)}-K_{(n,1)}- J_{(n,1)}\|_s\equiv d_s\left((K\# J)_{(n,1)}, K_{(n,1)}\# J_{(n,1)}\right)
$$
is bounded by a function of $n$ alone. Thus the $(n,1)$-cable operator is a $\qh$.
\end{proof}

\begin{prop}\label{prop:morequasi} Suppose that $q:\C\to \R$ is a quasi-homomorphism whose absolute value gives a lower bound for a positive multiple of the slice genus, that is, there is some $C>0$ such that, for all $K$,  $|q(K)|\leq Cg_s(K)$. Then, for any satellite operator $P$,  the composition $q\circ P:\C\to \R$ is a quasi-homomorphism.
\end{prop}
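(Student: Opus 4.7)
The plan is to combine Theorem~\ref{thm:quasimorphism}, which says $P$ is itself a quasi-homomorphism with respect to the slice-genus norm, with the hypothesis that $|q|$ is bounded above by a multiple of the slice genus. These two facts convert a defect measured in $\|-\|_s$ into a defect measured in $|-|$ via the linear comparison supplied by the hypothesis.

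More precisely, let $D_P$ denote the defect of $P:(\C,\|-\|_s)\to(\C,\|-\|_s)$ guaranteed by Theorem~\ref{thm:quasimorphism}, let $D_q$ denote the defect of $q$, and let $N>0$ be the constant so that $|q(K)|\leq N\,g_s(K)$ for all $K\in\C$. Set
$$
E \;=\; P(K\# J) - P(K) - P(J)\quad\in\ \C,
$$
so that by the quasi-homomorphism property of $P$ we have $\|E\|_s\leq D_P$, and hence by hypothesis
$$
|q(E)|\;\leq\; N\,g_s(E)\;=\;N\,\|E\|_s\;\leq\;N D_P.
$$

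Now I would write $P(K\# J)=\bigl(P(K)+P(J)\bigr)+E$ and apply the quasi-additivity of $q$ twice: once to the pair $\bigl(P(K)+P(J),\ E\bigr)$ to get
$$
\bigl|\,q(P(K\# J)) \;-\; q(P(K)+P(J)) \;-\; q(E)\,\bigr|\;\leq\;D_q,
$$
and once to the pair $(P(K),P(J))$ to get
$$
\bigl|\,q(P(K)+P(J)) \;-\; q(P(K)) \;-\; q(P(J))\,\bigr|\;\leq\;D_q.
$$
Combining these two inequalities with the triangle inequality and with $|q(E)|\leq N D_P$ yields
$$
\bigl|\,q(P(K\# J)) \;-\; q(P(K)) \;-\; q(P(J))\,\bigr|\;\leq\;2D_q + N D_P,
$$
so $q\circ P$ is a quasi-homomorphism with defect at most $2D_q+N D_P$.

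There is no real obstacle here; the argument is a formal chase once Theorem~\ref{thm:quasimorphism} is in hand. The only point worth flagging is that the hypothesis is used in exactly the right place: the defect of $P$ is controlled in $\|-\|_s$, so in order to transport that control across $q$ we must have precisely the kind of Lipschitz-type bound $|q(K)|\leq N g_s(K)$ assumed in the statement. As the authors note, invariants such as Levine--Tristram signatures, $\tau$, $s$, and $\epsilon$ all satisfy this bound and are already known to be quasi-homomorphisms, so the proposition then produces an infinite family of concrete $\qh$s of the form $q\circ P$.
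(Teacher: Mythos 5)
Your argument is correct and is essentially the paper's own proof: the same decomposition $E=P(K\#J)-P(K)-P(J)$, the same use of Theorem~\ref{thm:quasimorphism} to bound $\|E\|_s$ by $D_P$, and the same transport of that bound through the hypothesis $|q(K)|\leq Cg_s(K)$, yielding the identical defect bound $2D_q+CD_P$. The only difference is that you spell out the two applications of quasi-additivity of $q$ explicitly, which the paper compresses into one displayed chain of inequalities.
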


\begin{proof}  The proof follows that of ~\cite[Cor. 1B]{BrKe14} where the slightly stronger assumption was made that $q$ is Lipshitz.  Let $E=P(K\#J)-P(K)-P(J)$.  Since $P$ is a quasi-homomorphism, $\|E\|_s\leq D_P$. Then
\begin{equation*}
\begin{array}{rl}
|q\left(P(K\# J)\right)-q\left(P(K)\right)-q\left(P(J)\right)|=&|q\left(P(K)+P(J)+E)\right)-q\left(P(K)\right)-q\left(P(J)\right)|\\
&\leq 2D_q+|q(E)|\\
&\leq 2D_q+ C\|E\|_s\\
&\leq 2D_q+CD_P.
\end{array}
\end{equation*}

\end{proof}
\bibliographystyle{plain}
\bibliography{mybib9}

\begin{thebibliography}{10}

\bibitem{AkKi3}
Selman Akbulut and Robion Kirby.
\newblock Mazur manifolds.
\newblock {\em Michigan Math. J.}, 26(3):259--284, 1979.

\bibitem{AkYa1}
Selman Akbulut and Kouichi Yasui.
\newblock Corks, plugs and exotic structures.
\newblock {\em J. G\"okova Geom. Topol. GGT}, 2:40--82, 2008.

\bibitem{BGN}
Laurent Bartholdi, Rostislav Grigorchuk, and Volodymyr Nekrashevych.
\newblock From fractal groups to fractal sets.
\newblock In {\em Fractals in {G}raz 2001}, Trends Math., pages 25--118.
  Birkh\"auser, Basel, 2003.

\bibitem{Boyer1}
Steven Boyer.
\newblock Shake-slice knots and smooth contractible {$4$}-manifolds.
\newblock {\em Math. Proc. Cambridge Philos. Soc.}, 98(1):93--106, 1985.

\bibitem{BrKe14}
Michael Brandenbursky and Jarek Kedra.
\newblock Concordance group and stable commutator length in braid groups.
\newblock preprint 2014, http://front.math.ucdavis.edu/1402.3191.

\bibitem{CF}
Andrew Casson and Michael Freedman.
\newblock Atomic surgery problems.
\newblock In {\em Four-manifold theory (Durham, N.H., 1982)}, volume~35 of {\em
  Contemp. Math.}, pages 181--199. Amer. Math. Soc., Providence, RI, 1984.

\bibitem{CD13}
Tim~D. Cochran and Christopher~W. Davis.
\newblock Counterexamples to {K}auffman's conjectures on slice knots.
\newblock preprint available at http://front.math.ucdavis.edu/1303.4418.

\bibitem{CDR}
Tim~D. Cochran, Christopher~W. Davis, and Arunima Ray.
\newblock Injectivity of satellite operators in knot concordance.
\newblock {\em Journal of Topology}, 2014.
\newblock DOI 10.1112/jtopol/jtu003.

\bibitem{CFHH}
Tim~D. Cochran, Bridget~D. Franklin, Matthew Hedden, and Peter~D. Horn.
\newblock Knot concordance and homology cobordism.
\newblock {\em Proceedings of the American Mathematical Society},
  141:2193--2208, 2013.

\bibitem{CHL5}
Tim~D. Cochran, Shelly Harvey, and Constance Leidy.
\newblock Primary decomposition and the fractal nature of knot concordance.
\newblock {\em Math. Annalen}, 2010, (electronic).
\newblock DOI:10.1007/s00208-010-0604-5.

\bibitem{CollinsThesis}
Julia Collins.
\newblock On the concordance orders of knots.
\newblock http://front.math.ucdavis.edu/1206.0669.

\bibitem{FoMi57}
Ralph~H. Fox and John~W. Milnor.
\newblock Singularities of {$2$}-spheres in {$4$}-space and equivalence of
  knots.
\newblock {\em (Abstract) Bulletin of Amer. Math. Soc.}, 63:406, 1957.

\bibitem{FoMi66}
Ralph~H. Fox and John~W. Milnor.
\newblock Singularities of {$2$}-spheres in {$4$}-space and cobordism of knots.
\newblock {\em Osaka J. Math.}, 3:257--267, 1966.

\bibitem{Ha2}
Shelly~L. Harvey.
\newblock Homology cobordism invariants and the {C}ochran-{O}rr-{T}eichner
  filtration of the link concordance group.
\newblock {\em Geom. Topol.}, 12(1):387--430, 2008.

\bibitem{HedKirk2}
Matthew Hedden and Paul Kirk.
\newblock Instantons, concordance, and {W}hitehead doubling.
\newblock {\em J. Differential Geom.}, 91(2):281--319, 2012.

\bibitem{Kirbyproblemlist}
Robion Kirby.

\bibitem{KiLi1999}
Paul Kirk and Charles Livingston.
\newblock Twisted knot polynomials: inversion, mutation and concordance.
\newblock {\em Topology}, 38(3):663--671, 1999.

\bibitem{ALev2014}
Adam Levine.
\newblock Non-surjective satellite operators and piecewise-linear concordance.
\newblock preprint http://front.math.ucdavis.edu/1405.1125.

\bibitem{Lick2}
W.~B.~Raymond Lickorish.
\newblock {\em An introduction to knot theory}, volume 175 of {\em Graduate
  Texts in Mathematics}.
\newblock Springer-Verlag, New York, 1997.

\bibitem{Li8}
Charles Livingston.
\newblock The slicing number of a knot.
\newblock {\em Algebr. Geom. Topol.}, 2:1051--1060, 2002.

\bibitem{Li2}
Charles Livingston.
\newblock The concordance genus of knots.
\newblock {\em Algebr. Geom. Topol.}, 4:1--22, 2004.

\bibitem{Li11}
Charles Livingston.
\newblock The stable 4-genus of knots.
\newblock {\em Algebr. Geom. Topol.}, 10(4):2191--2202, 2010.

\bibitem{Naik1}
Swatee Naik.
\newblock Casson-{G}ordon invariants of genus one knots and concordance to
  reverses.
\newblock {\em J. Knot Theory Ramifications}, 5(5):661--677, 1996.

\bibitem{Ray4}
Arunima Ray.
\newblock Satellite operators with distinct iterates in concordance.
\newblock Preprint 2014, http://front.math.ucdavis.edu/1403.3883.

\bibitem{SatoY}
Yoshihisa Sato.
\newblock {$3$}-dimensional homology handles and minimal second {B}etti numbers
  of {$4$}-manifolds.
\newblock {\em Osaka J. Math.}, 35(3):509--527, 1998.

\bibitem{Taylor1979}
Laurence~R. Taylor.
\newblock On the genera of knots.
\newblock In {\em Topology of low-dimensional manifolds ({P}roc. {S}econd
  {S}ussex {C}onf., {C}helwood {G}ate, 1977)}, volume 722 of {\em Lecture Notes
  in Math.}, pages 144--154. Springer, Berlin, 1979.

\bibitem{Tri}
A.~G. Tristram.
\newblock Some cobordism invariants for links.
\newblock {\em Proc. Cambridge Philos. Soc.}, 66:251--264, 1969.

\end{thebibliography}

\end{document}